\documentclass[a4paper, 11pt]{amsart}
\usepackage{amsmath, amssymb, amscd, amsthm, amsfonts, tikz, tikz-cd}
\usepackage{graphicx}
\usetikzlibrary{arrows.meta,positioning}
\tikzset{>=Stealth, dot/.style={circle,fill,inner sep=1.3pt}}
\usetikzlibrary{decorations.markings}
\tikzset{double line with arrow/.style args={#1,#2}{decorate,decoration={markings,%
mark=at position 0 with {\coordinate (ta-base-1) at (0,1pt);
\coordinate (ta-base-2) at (0,-1pt);},
mark=at position 1 with {\draw[#1] (ta-base-1) -- (0,1pt);
\draw[#2] (ta-base-2) -- (0,-1pt);
}}}}
\usepackage{stmaryrd}
\usepackage{comment}
\usepackage{mathrsfs}
\usepackage{hyperref}
\usepackage[T1]{fontenc}
\usepackage[british,french]{babel}
\usepackage[all]{xy}
\usepackage[margin=24.4mm]{geometry}
\newcommand{\MG}[1]{\textcolor{blue}{[MG: \footnotesize #1]}}
\newcommand{\ZJ}[1]{\textcolor{red}{[ZJ: \footnotesize #1]}}
\newcommand{\todo}[1]{\textcolor{violet}{[To do: \footnotesize #1]}}

\title{A Categorification of Subword Complexes and Its Hall Algebra}

\author{Mikhail Gorsky}
\address{Institut Camille Jordan UMR 5208, Université Jean Monnet, CNRS, Centrale Lyon, INSA Lyon, Université Claude Bernard Lyon 1, 20, rue Annino, 42023, Saint-Étienne, France;
\newline and Universit\"at Hamburg, Fachbereich Mathematik, Bundesstraße 55, 20146 Hamburg, Germany}
\email{mikhail.gorskii@univ-st-etienne.fr}

\author{Zijun Li}
\address{Institut Camille Jordan UMR 5208, Université Jean Monnet, CNRS, Centrale Lyon, INSA Lyon, Université Claude Bernard Lyon 1, 20, rue Annino, 42023, Saint-Étienne, France}
\email{zijun.li@univ-st-etienne.fr}

\date{\selectlanguage{british}\today}

\newtheorem{theorem}{Theorem}[section]
\newtheorem{lemma}[theorem]{Lemma}
\newtheorem{proposition}[theorem]{Proposition}
\newtheorem{corollary}[theorem]{Corollary}

\theoremstyle{definition}
\newtheorem{definition}[theorem]{Definition}

\newtheorem{example}[theorem]{Example}

\theoremstyle{remark}
\newtheorem{remark}[theorem]{Remark}

\newcommand{\V}{\operatorname{V}}
\newcommand{\F}{\operatorname{F}}
\newcommand{\rx}{\operatorname{r}}
\newcommand{\bx}{\operatorname{b}}

\newcommand{\SC}{\mathcal{SC}}

\newcommand{\spa}{\mathscr{I}}
\newcommand{\Irr}{\operatorname{Irr}}
\newcommand{\Ext}{\operatorname{Ext}}
\newcommand{\Vect}{\operatorname{Vect}}

\newcommand{\CC}{\mathcal{C}}
 
\newcommand{\spanx}{\operatorname{span}} 
\newcommand{\id}{\operatorname{id}}

\newcommand{\CN}{\mathbb{C}[N]}

\newcommand{\Mor}{\operatorname{Mor}}

\addto\captionsfrench{}
\addto\captionsfrench{}

\addto\captionsfrench{}
\begin{document}


\begin{abstract}
    Bergeron and Ceballos defined a Hopf algebra structure on equivalence classes of subword complexes. We introduce a category of subword complexes, endow it with a proto-exact-like structure, and show that the corresponding dual Hall Hopf algebra is isomorphic to the algebra of Bergeron--Ceballos. We prove that the full subcategory of root-independent objects is  proto-abelian in the sense of Dyckerhoff. We give a categorical lift of flips in subword complexes. 


    We consider a version of a category of formal direct sums of subobjects for a root-independent subword complex and interpret it in terms of quivers. If the corresponding quiver is a tree, the category is endowed with a proto-exact structure. We show that its Hall algebra is isomorphic to the Hall algebra of the category of representations of this  quiver over $\mathbb{F}_1$. Under certain conditions, a flip corresponds to changing a proto-exact structure while keeping the category the same up to isomorphism, which corresponds to a non-trivial automorphism of the Hall algebra. In type $A$, this leads to a realization of the nilpotent part of the universal enveloping algebra and its automorphisms.

\end{abstract}

\maketitle


\section*{Introduction}

Subword complexes are simplicial complexes introduced by Knutson and Miller in \cite{subword, KM2005}. 
Originally motivated by problems of Gr\"obner degenerations of matrix Schubert varieties, these complexes has since been related to a number of different topics in representation theory, algebraic combinatorics, total positivity, cluster algebras, to name a few. A subword complex is associated with a pair $(Q, w)$ of an element $w$ of a Coxeter group $W$ and a word $Q$ in the alphabet of simple reflections. The effects on the complex of applying natural operations to $Q$ while preserving $w$ have been studied e.g. in \cite{cls, gorsky_edge, G_subword_3}. A subword complex is either a combinatorial sphere, or a ball. In the spherical case, dual subword complexes describe adjacency of strata in stratifications of brick varieties \cite{escobar}. These are subvarieties of Bott-Samelson type varieties and provide smooth compactifications of a family of so-called braid varieties \cite{CGGS2}. The latter generalize open Richardson and positroid varieties, are isomorphic to spectra of locally acyclic cluster algebras \cite{CGGLSS,GLSB, CGGSSBS}, and are related to invariants of smooth and Legendrian realizations of closures of positive braids \cite{trinh2021hecke,casals_gao}.
 Brick varieties admit Hamiltonian torus actions, with moment polytopes being brick manifolds introduced in \cite{pilaud_santos_12,pilaud_stump_15}. Under a fairly restrictive condition of root-independency, a brick variety is precisely the toric variety of a brick manifold \cite{escobar}; in this case, the latter provides a geometric realization of the dual subword complex \cite{pilaud_stump_15}. Cluster complexes of finite type cluster algebras are isomorphic to certain root-independent subword complexes, and corresponding brick manifolds realize generalized associahedra \cite{cls, pilaud_stump_15}. The non-spherical case has been studied in \cite{JahnStump}. In a different direction, the interior dual block complexes of subword complexes have been conjectured to be homeomorphic to the fibers of the
Chevalley exponentiation maps to totally nonnegative spaces \cite{DHM}.


Bergeron and Ceballos introduce the Hopf algebra of subword complexes in \cite{BC}. An element of this Hopf algebra is (an equivalence class of) a subword complex equipped with a maximal face. With any subword complex, there is an associated vector space generated by a configuration of certain real roots of the corresponding Coxeter group. The multiplication in the Hopf algebra of subword complexes is induced by the direct sum of vector spaces. The comultiplication is given by the decomposition of a vector space into the direct sum of smaller vector spaces. \par

Over the last fifteen years, various Hopf algebras appearing in combinatorics, representation theory, study of mapping class groups, etc., have been considered from a unifying perspective of 2-Segal spaces \cite{DK}, also known as decomposition spaces \cite{GCKT}. These are simplicial spaces organised in such way that their $1$-simplices
can be arranged into an algebra and a coalgebra structures, with $2$-simplices inducing the operations and the interplay between $1$-, $2$-, and $3$-simplices governing the (co)associativity and, in some cases, compatibility of the multiplication and comultiplication. We refer the reader to the original sources and to more recent surveys \cite{PRO, CY}. One source of such simplicial spaces comes from variations of the Waldhausen $S^\bullet$-construction applied to categories with certain extra structure giving some version of Yoneda-style first extensions. Dyckerhoff and Kapranov \cite{DK} introduced a very general notion of a suitable structure, so-called proto-exact categories (and their higher categorical analogues, proto-exact $\infty$-categories) giving a non-additive generalization of Quillen exact structures. Proto-exact categories appear in various contexts, including study of pointed sets and matroids, normed Euclidean spaces, Arakelov geometry, and, in additive case, representation theory and algebraic geometry. 

Given a proto-exact category, the (co)algebra associated with is via the 2-Segal space machinery of \cite{DK} is called its Hall (co)algebra. For abelian categories, such algebras have been introduced by Ringel in a series of papers starting from \cite{R1}. This (co)algebra is a vector space with basis given by isomorphism classes of objects in the category and operations defined in terms of first extension groups. The prime example is given by the Hall algebra of the category of nilpotent representations of a finite quiver $Q$ over a finite field $\mathbb{F}_q$. Ringel proved that its naturally defined subalgebra is isomorphic, up to a twist of multiplication, to the nilpotent part of the Drinfeld-Jimbo quantum group corresponding to $Q$, with the quantization parameter specialized at $\sqrt{q}$. There exists an extended version which further carries a Hopf algebra structure and is related in the similar way to the Borel part of the quantum group. We refer the reader to \cite{Hu, schiffmann1, schiffmann2, schiffmann3} for surveys on this construction and many further examples of Hall algebras.

 

In this paper, we 
introduce a categorification $\mathscr{C}$ of subword complexes, partially motivated by these considerations.  
An object in $\mathscr{C}$ is a subword complex equipped with a maximal face. Morphisms are given by equivalent pairs of subobjects. Then we show that the Hopf algebra of subword complexes can be interpreted as a dual Hall Hopf algebra $\mathcal{H}(\mathscr{C})$ defined on the set of isomorphism classes of objects in $\mathscr{C}$. 
The category $\mathscr{C}$ is a non-additive cofinitary category, meaning that the morphism set between any two objects in $\mathscr{C}$ is a finite set and any object has only finitely many (isomorphism classes) of subobjects. There is a natural forgetful functor $\mathscr{C} \to \Vect_{\mathbb{R}}$, and the $\Ext^1$-structure defining the  comultiplication is defined by pulling back the (split) exact structure on $\Vect_{\mathbb{R}}$ along this functor. We note that this structure is not proto-exact, but the Hall comultiplication defined in the same way as for proto-exact categories is still coassociative. It is moreover compatible with the multiplication defined via a symmetric monoidal structure playing the role of direct sum.

We further realize certain Hopf subalgebras of the Bergeron-Ceballos algebra as dual Hall algebras of full subcategories of $\mathscr{C}$.
In particular, we prove that the full subcategory $\mathscr{D}$ of $\mathscr{C}$ formed by the root-independent complexes is cofinitary proto-abelian in the sense of \cite{DK, PRO}, and the corresponding Hopf subalgebra fits into the framework of (dual) Hall coalgebras of such categories defined in \emph{loc. cit.}.
When we focus on objects whose associated Coxeter group is finite, we have a proto-abelian finitary (and still cofinitary) category $\mathscr{D}_{fin}$. 
The multiplication defined by all exact sequences in the proto-abelian structure on $\mathscr{D}_{fin}$ defines a Hall algebra of $\mathscr{D}_{fin}$. The comultiplication coming from $X \oplus Y$ is compatible with this multiplication, and thus defined Hall Hopf algebra is dual to the subalgebra of the Bergeron-Ceballos algebra on root-indedependent subword complexes for finite Coxeter groups.
\par

Given an object $X\in \mathscr{D}$, we construct a category $\mathcal{L}_X$ using a version of the span category of the subcategory of $\mathscr{D}$ given by subobjects of $X$. From the configuration of roots associated with $X$, we define a quiver which we call the root configuration quiver of $X$ and denote 
$\Gamma_X$. When the subword complex associated with $X$ is a realization of a $c$-cluster complex of simply-laced type as in \cite{cls} and the maximal face is the leftmost one, $\Gamma_X$ is just the corresponding Dynkin quiver. We construct a subquiver category $\mathcal{S}_X$ whose objects are disjoint unions of subquivers of $\Gamma_X$ and show that $\mathcal{L}_X$ can be viewed as a subcategory of $\mathcal{S}_X$. When $\Gamma_X$ is a tree, the partial order on the set of vertices of $\Gamma_X$ induces a proto-exact structure on $\mathcal{S}_X$. We prove that the Hall algebra of this proto-exact category is isomorphic to the Hall algebra of the category of representations of $\Gamma_X$ over $\mathbb{F}_1$. 
The latter category and its Hall algebra were introduced by Szczesny in \cite{quiverf1} and studied extensively e.g. in \cite{JunSistko23, JunSistko, FRY}. We note that while the Hall algebras are isomorphic, the categories are not equivalent, even if we forget about their proto-exact structures.
As an application, we show that in type $A$ this isomorphism induces a realization of the universal enveloping algebra using the Hall algebra on $\mathcal{S}_X$ given by this proto-exact structure. \par

There are some natural combinatorial operations on subword complexes. We prove that under certain conditions, the flip operation between two adjacent maximal faces corresponds to the reflection of root configuration quivers. The category $\mathcal{S}_X$ stays the same, up to canonical isomorphism, but the proto-exact structure does depend on the orientation and is thus changed. As is studied in the additive case in \cite{FangGorsky}, one can consider Hall algebras of different proto-exact structures on the same category. For $\mathcal{S}_X$, it follows from results of \cite{JunSistko23} that the change of structure induced by such a flip (i.e. by a reflection) induces a non-trivial automorphism of the Hall algebra and so of the associated universal enveloping algebra. \par



The category of subword complexes is also closely related to galleries in the Coxeter complex. For the theory of folded galleries in Coxeter complexes, see, for example, \cite{Schwer}. Any object $X$ in $\mathscr{C}$ corresponds to a unique folded gallery on the Coxeter complex of $W_X$, and objects are isomorphic if and only if the corresponding galleries are identified by an isomorphism of Coxeter systems. The category $\mathscr{C}$ can thus be seen as a category of folded galleries. In the companion paper of the second-named author \cite{Zijun},
it is proved that any flat of an object corresponds to the projection of the folded gallery into a sub-Coxeter complex. \par
In particular, the comultiplication formula in the Hall algebra of the subquiver category $\mathcal{S}_X$ defined in Section~\ref{sec:categories_of_subobjects_and_subquivers} can be translated to a comultiplication formula for folded galleries in the Coxeter complex. In the same paper, it is shown that in type $A$ this comultiplication coincides with the comultiplication of the coordinate ring of the corresponding unipotent group $\CN$ using the Mirkovic-Vilonen (MV) polytope model. \par
The MV polytope model parameterizes a crystal basis on $\CN$ using the crystal structure \cite{JOEL}. In particular, we can choose a set of fundamental MV polytopes which parameterizes a generating set of $\CN$ as an algebra \cite{BAI}. These MV polytopes correspond to certain folded galleries in the type $A$ Coxeter complex. Via these relations, we can map certain objects in $\mathscr{C}$ with Coxeter group of type $A$ to polynomials in the coordinate ring $\CN$. See \cite{MVHD}, \cite{GL} and \cite{BDKT} for more details on MV polytopes and the crystal theory.\par

\subsection*{Organisation of the paper}
In Section~\ref{sec:category_C}, we recall basic definitions on subword complexes and construct the categorification. In Section~\ref{sec:root-independednt_subcat}, we recall some definitions about the proto-abelian category and prove that the root independent subcategory is proto-abelian. In Section~\ref{sec:categories_of_subobjects_and_subquivers}, we define the root configuration quiver and the subquiver category. When the root configuration quiver $\Gamma_X$ is a tree, 
we prove that the partial order induced by $\Gamma_X$ gives a proto-exact structure on $\mathcal{S}_X$. Then 
we show the isomorphism of Hall algebra between $\mathcal{H}(\mathcal{S}_X)$ and $\mathcal{H}(Rep(\Gamma_X,\mathbb{F}_1))$. Then we get a non-trivial isomorphism of the Hall algebra using the flips of the subword complex. At the end of Section~\ref{sec:categories_of_subobjects_and_subquivers}, we show that this gives a realization of the universal enveloping algebra in type $A$ and give some examples.


\section{Categorification of subword complexes} \label{sec:category_C}

We refer the reader to \cite{bb_coxeter} and \cite{AbramenkoBrown2008} for the basic background on Coxeter groups, root systems, and Coxeter complexes. 

\subsection{Subword complexes and flats}
Given an arbitrary Coxeter group $W$ of rank $n$ with generators $\{s_i\}_{i\in [n]}$, a word $Q=s_{i_1}\cdots s_{i_m}$ and an element $\pi\in W$, the subword complex $\SC_W(Q,\pi)$ is the simplicial complex whose faces are subsets $I$ of $[m]$ such that the subword of $Q$ with positions at $[m]\backslash I$ contains a reduced expression of $\pi$. \par
We consider the set of all quadruples $\tilde{\mathcal{C}}=\{(W,Q,\pi ,I)\}$, where $W$ is an arbitrary Coxeter group and $I$ is a maximal face of $\SC_W (Q,\pi)$. We denote the length of the word $Q$ by $l(Q)$. The maximal face $I$ can be viewed as a subset of $[n]=\{1,\cdots,n\}$. We use $k$ to denote a field containing $\mathbb{Z}$ in this paper. \par

In \cite{BC}, Bergeron and Ceballos give an equivalence relation for such quadruples:

\begin{definition}
\label{def:equivalence}
Two quadruples $X=(W,Q,\pi ,I)$ and $X=(W',Q',\pi' ,I')$ in $\tilde{\mathcal{C}}$ are equivalent if and only if there exists a group isomorphism $\phi$ from $W$ to $W'$ given by a reassignment of generators of $W$ such that $\phi(Q)=Q'$ up to commutation of consecutive commuting letters, $\phi(\pi)=\pi'$ and $I'$ is the positions corresponding to the positions of $I$ in $\phi(Q)$, up to the performed commutations of the form $\ldots s_i s_j \ldots \leftrightarrow \ldots s_j s_i \ldots$, with $i \neq j, s_is_j = s_js_i$. We denote this equivalence relation by $X\sim X'$.
\end{definition}

\begin{remark} \label{rem:no_swapping_identical_letters}
In Definition~\ref{def:equivalence}, the convention is that swapping two consecutive identical letters is not an equivalence. For example, the quadruples $(S_2, ss, s, \{1\})$ and $(S_2, ss, s, \{2\})$ are not equivalent. This can be interpreted via root configurations defined below: these two quadruples have different root configurations. More conceptually, they correspond to different folded galleries.
\end{remark}

 For any quadruple $X$ in $\tilde{\mathcal{C}}$, we usually use the subscript $X$ to denote the component corresponding to $X$ in the quadruple. In other words, $X=(W_X,Q_X,\pi _X,I_X)$. The real vector space spanned by the roots of $W_X$ is denoted by $V_X$. The length of the word $Q_X$ is denoted by $n_X$. \par
 
For any $l\in [n_X]$, we have an associated root 
 \begin{align}
\rx_X(l):=\prod_{j\in [l-1] \backslash I_X}s_{i_j}(\alpha_{i_l}). \label{eq:root function}    
 \end{align}
We get a function $\operatorname{r}_X$ from $[n_X]$ to the set of roots associated with $W_X$ by Equation~\eqref{eq:root function}, which is called the \textbf{root function} of $X$. Let $R(X):=\{\rx _X(i)\}_{i\in I_X}$ denote the \textbf{root configuration} of $X$ and $r(X):=\{\rx_X(i)\}_{i\in [n_X]}$ denote the set of all such roots of $X$. A quadruple $X$ is \textbf{irreducible} if $V_X=\operatorname{span}R(X)$. For any subset $H$ of $[n_X]$, we denote by $\V(H)$ the vector space spanned by $\{r_X(i)\}_{i\in H}$.  \par

 We denote the collection of irreducible quadruples by $\CC\subseteq \tilde{\mathcal{C}}$. The vector space spanned by the set of equivalence classes of irreducible quadruples can be equipped with a connected graded commutative and cocommutative Hopf algebra structure by \cite[Theorem 3.6]{BC}.
 \par

In this paper, we define morphisms between two quadruples to upgrade the set $\mathcal{C}$ to a category and show that the Hopf algebra of subword complexes in \cite{BC} can be viewed as a dual Hall algebra $\mathcal{H}(\mathscr{C})$ of this category. \par

In \cite{BC}, Bergeron and Ceballos introduce the concept of a flat: 

\begin{definition} \label{def:flats}
    Given a quadruple $X\in \tilde{\mathcal{C}}$ and a subspace $V$ of $V_X$, the subset $\F(V)\subseteq[n_X]$ defined by the equation \begin{align}
        \{\rx_X(i)\}_{i\in \F(V)}=r(X) \cap V.   \nonumber
    \end{align}
    is called the \textbf{flat} associated to the subspace $V$. \par
    We say that a flat $F'$ is a \textbf{subflat} of another flat $F$ if $F'\subseteq F$.
\end{definition}
 Notice that $\F(\V(F))=F$ for any flat $F$ and $\V(\F(V))=V$ for any subspace $V$ of $V_X$. \par
 
Given a flat $F$ of $X$, the subspace $\V(F)$ of $V_X$ contains a natural root system
\[
\Phi_F = \Phi_F^+ \sqcup \Phi_F^- 
\]
where $\Phi_F$, $\Phi_F^+$, $\Phi_F^-$ are the restrictions of $\Phi_X$, $\Phi_X^+$, $\Phi_X^-$ to $\V(F)$ respectively. The fact that the intersection of a root system and a subspace is again a root system with simple roots contained in $\Phi^+$ is a non-trivial result by Dyer in \cite{Dyer} and by Deodhar in \cite{Deodhar}. Suppose that $Q_X=s_{i_1}\cdots s_{i_m}$. Set $F=\{j_1,\cdots ,j_r\}$ where $j_l< j_{l+1}$ for any $l<r$.  Define $\beta _F = (\beta_1, \ldots, \beta_{r})$ as the list of roots
\[
\beta_k := \prod_{j\in[B_k]}s_{i_j} (\alpha_{i_{j_k}}),
\]
where $B_k := ([j_k-1] \setminus I_X)\setminus F$ is the set of positions on the left of $j_k$ in the complement of $I_X$ which are not in $F$. \par

We denote the order preserving bijection from $F$ to $[n_{X_F}]$ by $\bx_F$. The roots associated to a quadruple are well preserved by the following proposition in \cite{BC}:

\begin{lemma}\cite[Lemma 2.10 and Theorem 2.11]{BC} \label{lem:flats_quadruples_BC}
     Any flat $F$ of an quadruple $X\in \tilde{\mathcal{C}}$ induces a quadruple $X_F\in \tilde{\mathcal{C}}$, such that $V_{X_F}=\V(F)$ and $n_{X_F}=|F|$. Simple roots of $\Phi_{X_F}$ are given by $\beta_F$. We have $\rx_X(i)=\rx_{X_F}(\operatorname{b}_F(i))$ for any $i\in F$. 
\end{lemma}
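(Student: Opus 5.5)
We prove the lemma by building $X_F$ explicitly and tracking roots position by position. Since $\V(F)\cap\Phi_X$ is a root system, by Dyer \cite{Dyer} and Deodhar \cite{Deodhar}, it is the root system of a reflection subgroup $W_F\subseteq W_X$. This subgroup is generated by the reflections in the simple roots of $\Phi_F^+$, and $(W_F,S_F)$ is a Coxeter system. First we show that the list $\beta_F$ consists of roots in $\V(F)$. Next we show that the distinct elements of $\beta_F$ are exactly the simple roots of $\Phi_F^+$. We then let $Q_{X_F}$ be the word $t_1\cdots t_r$, where $t_k$ is the reflection in $\beta_k$, regarded as a letter of $S_F$. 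We set $I_{X_F}=\bx_F(F\cap I_X)$. Finally, $\pi_{X_F}$ is defined as the product of the letters $t_k$ with $j_k\notin I_X$.

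The central computation is an induction on $k$. For $j\in[n_X]$, let $w_{<j}=\prod_{j'\in[j-1]\setminus I_X}s_{i_{j'}}$, so that $\rx_X(j)=w_{<j}(\alpha_{i_j})$. Split $[j_k-1]\setminus I_X$ into the positions lying in $F$ and those in $B_k$. Each letter $s_{i_j}$ with $j\in F\setminus I_X$, $j<j_k$, can be moved to the left past the remaining letters. Doing so conjugates it into the reflection $t_{\rx_X(j)}$; here we use the identity $w s_i w^{-1}=t_{w(\alpha_i)}$. This rewrites $w_{<j_k}$ as a product of reflections $t_{\rx_X(j)}$, with $j\in F\setminus I_X$ and $j<j_k$, followed by $\prod_{B_k}s_{i_j}$. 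Applying this to $\alpha_{i_{j_k}}$ gives
\[
\rx_X(j_k)=\Big(\prod_{j\in F\setminus I_X,\ j<j_k} t_{\rx_X(j)}\Big)\,\beta_k .
\]
By induction each $\rx_X(j)$ with $j\in F$, $j<j_k$, equals $\rx_{X_F}(\bx_F(j))$, which lies in $\V(F)$, so each such $t_{\rx_X(j)}$ lies in $W_F$. Since $\rx_X(j_k)\in\V(F)$, it follows that $\beta_k\in\V(F)$ as well. Comparing with the definition of the root function of $X_F$ then gives $\rx_X(i)=\rx_{X_F}(\bx_F(i))$, provided the letter at position $k$ acts as $t_{\beta_k}$ and $\beta_k$ is simple in $\Phi_F^+$.

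The main obstacle is proving that each $\beta_k$ is a simple root of $\Phi_F^+$, and that every simple root of $\Phi_F^+$ occurs among them. For the first part, let $u=\prod_{B_k}s_{i_j}$. The positions in $B_k$ have roots $\rx_X(j)\notin\V(F)$, by the definition of a flat. The product $u$ therefore corresponds to a path in the Coxeter complex that crosses no wall of $W_F$ lying inside $\V(F)$. Thus $u$ stays in the fundamental chamber of $W_F$, and $u$ sends simple roots $\alpha_i$ lying in $\V(F)$ after conjugation to simple roots of $\Phi_F^+$. This is Dyer's characterization: $\beta=u\alpha_i$ is $\Phi_F$-simple if and only if $u$ is minimal in its coset $W_Fu$. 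The minimality follows because the inversion set of $u$ is disjoint from $\V(F)$. For the second part, the simple roots of $\Phi_F^+$ span $\V(F)$, and $\V(F)$ is spanned by the $\rx_X(j_k)$, which by the displayed formula lie in $W_F\cdot\{\beta_k\}$. Every simple reflection of $W_F$ must therefore occur among the $t_k$, by the standard fact that a reflection subgroup is generated by the reflections it contains along these orbits.

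With all of this in place, $(W_F,Q_{X_F},\pi_{X_F},I_{X_F})$ is a quadruple. The complement of $I_{X_F}$ is a reduced word for $\pi_{X_F}$: its root sequence consists of distinct positive roots, because the sequence for $X$ restricted to $F$ does. The face $I_{X_F}$ is maximal because its complement has size exactly $\ell(\pi_{X_F})$. The remaining claims follow directly from the construction: $V_{X_F}=\V(F)$, and $n_{X_F}=|F|$.
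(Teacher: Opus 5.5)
The paper gives no proof of its own here; it imports the statement from \cite{BC}. Judged on its own, the first half of your sketch is sound. The conjugation identity is right, and Dyer's coset criterion is the right tool for showing that each $\beta_k$ is simple in $\Phi_F^+$. Four points need tightening.
\begin{itemize}
\item The product in your displayed formula must be taken in decreasing order of $j$.
\item ``Comparing with the definition'' really requires a telescoping step. Let $k_1<\dots<k_r<k$ be the indices with $j_{k_s}\in F\setminus I_X$, and set $P_s=t_{\beta_{k_1}}\cdots t_{\beta_{k_s}}$. Then $t_{\rx_X(j_{k_s})}=P_{s-1}t_{\beta_{k_s}}P_{s-1}^{-1}$, so the prefactor equals $P_r$, which gives $\rx_X(j_k)=P_r\beta_k=\rx_{X_F}(k)$.
\item The walls crossed by the word for $u$ (which need not be reduced) have roots $w^{-1}\rx_X(j)$ with $w\in W_F$, not $\rx_X(j)$. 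They still avoid $\V(F)$ because $W_F$ stabilises $\V(F)$, and the left inversions of $u$ are among them.
\item Dyer's criterion is only an implication: if $u$ is minimal in $W_Fu$ and $u\alpha_i\in\Phi_F$, then $u\alpha_i$ is simple. It is not an ``if and only if'', but the implication is all you use.
\end{itemize}

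The genuine gap is the claim that every simple root of $\Phi_F^+$ occurs among the $\beta_k$. The two facts you cite do not imply it. You use that the simple roots span $\V(F)$ and that $\rx_X(j_k)\in W_F\cdot\{\beta_k\}$. But in type $A_2$ the single orbit $W\cdot\{\alpha_1\}$ is all of $\Phi$ and spans $V$, yet it misses the simple root $\alpha_2$. The ``standard fact'' you invoke does not supply what is missing.

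Two ingredients are needed instead. First, the telescoping above shows $\rx_X(j_k)\in W'\beta_k$, where $W'=\langle t_{\beta_1},\dots,t_{\beta_r}\rangle$ is a standard parabolic subgroup of $(W_F,S_F)$. Hence $\V(F)\subseteq\operatorname{span}\{\beta_1,\dots,\beta_r\}$. Second, the simple roots of $\Phi_F$ must be linearly independent, so that a spanning subset of them is all of them.

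The second ingredient is automatic when $W_X$ is finite, since the form is then positive definite. For general Coxeter groups, canonical simple roots of reflection subgroups can be linearly dependent, and there can even be infinitely many of them. So you must either prove independence for subgroups of the form $\Phi_X\cap\V(F)$, or work with $W'$ in place of $W_F$. If you intend $V_{X_F}$ to be the reflection representation of the abstract Coxeter system $(W_F,S_F)$, then the identification $V_{X_F}=\V(F)$ needs this same input. It does not follow ``directly from the construction''.
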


\begin{definition}
    For a quadruple $X\in \CC$, a flat $F$ of $X$ is called \textbf{irreducible} of $X$ if $V_F = \operatorname{span}R(I_X\cap F)$, in other words, if $X_F\in \CC$.
    
    Given an irreducible flat $F'$ of $X$, we say that $F'$ is a \textbf{complementary flat} of $F$ in $X$ if $\V(F)\oplus \V(F')=V_X$. We denote a complementary flat by $F^{\perp}$.
       We denote the set of irreducible flats of $X$ by $\operatorname{Irr}(X)$. In particular, the empty flat is viewed as an irreducible flat.
\end{definition}
    

    

Notice that a flat $F$ of $X\in \CC$ is irreducible if and only if $F=\F(\V(F\cap I_X))$.

\begin{remark}

 Suppose that $X\in \mathcal{C}$, then $R(X)$ generates $V_X$. For any flat $F$ of $X$ such that $V_{X_F}=\V(F)$, we can always add some vectors from $R(X)\backslash \{\rx_X(i)| i\in F\cap I_X\}$ to $ \{\rx_X(i)| i\in F\cap I_X\}$ to obtain a generating set of $V_X$. Thus the flat $F$ always has a complementary. 

  Notice however that an irreducible flat can have many different complementary flats, which can even have different cardinalities, see Examples~\ref{S3} and ~\ref{S4} below.
\end{remark}

\subsection{The construction of the category of subword complexes \texorpdfstring{$\mathscr{C}$}{C}}

\begin{definition} \label{def:morphisms_in_C}
    Given $X,Y\in \CC$, we define the morphism set by 
   \begin{align}
       \operatorname{Mor}(X,Y)= \{(F_1,F_2)\}, \nonumber
   \end{align}
   where $(F_1,F_2)$ ranges over all pairs such that $F_1$ is an irreducible flat of $X$, $F_2$ is an irreducible flat of $Y$ and $X_{F_1}\sim Y_{F_2}$. We denote the pair given by the isomorphism between trivial flats $(\emptyset,\emptyset)$ by $0_{X,Y}$.
\end{definition}

To simplify the following construction, we introduce some auxiliary notations. Fix two quadruples $X,Y\in \CC$. For any $(F_1,F_2)\in \Mor(X,Y)$, the equivalence relation $X_{F_1}\sim Y_{F_2}$ gives a bijection $\gamma_{F_1,F_2}$ from $F_1$ to $F_2$ by Definition~\ref{def:equivalence}. We denote the linear isomorphism from $\V(F_1)$ to $\V(F_2)$ induced by $\gamma_{F_1,F_2}$ by $l_{F_1,F_2}$. Notice that if $(F_1,F_2)\in \Mor(X,Y)$, we have $(F_2,F_1)\in \Mor(Y,X)$ automatically. \par

\begin{definition} 
    Given $X,Y,Z\in \CC$, $(F_1,F_2)\in \Mor(X,Y)$ and $(F_3,F_4)\in \Mor(Y,Z)$, the composition of morphisms is defined as follows: \begin{align}
        (F_3,F_4)\circ (F_1,F_2)= 
(\F(l_{F_2,F_1}(\V(F_2\cap F_3\cap I_Y))), \F(l_{F_3,F_4}(\V(F_2\cap F_3 \cap I_Y)))). \label{eq:composition_C}
\end{align}
\end{definition}
 
\begin{theorem}
    The collection $\mathscr{C}=\bigl(\CC, \Mor, \circ\bigr)$ forms a category with zero object $\mathbf{0}=(W_{triv},\emptyset, \emptyset,\emptyset)$ and zero morphisms. This category $\mathscr{C}$ is called the \textbf{subword complex category}.
\end{theorem}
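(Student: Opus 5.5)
We need to check four things: the composite is again a morphism, identities exist, composition is associative, and $\mathbf{0}$ is a zero object. The guiding idea is to translate everything into linear algebra on the spaces $V_X$. An irreducible flat $F$ of $X$ is determined by the subspace $\V(F)=\V(F\cap I_X)$, since irreducibility means $F=\F(\V(F\cap I_X))$. A morphism $(F_1,F_2)$ then amounts to a linear isomorphism $l_{F_1,F_2}\colon \V(F_1)\to\V(F_2)$ that transports the root function and the maximal face, in the sense that $\rx_Y\circ\gamma_{F_1,F_2}=l_{F_1,F_2}\circ\rx_X$ on $F_1$. This compatibility follows from Lemma~\ref{lem:flats_quadruples_BC} together with the fact that the equivalence $X_{F_1}\sim Y_{F_2}$ identifies root functions via the reassignment of generators $\phi$.

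\textbf{Step 1: the composite is a morphism.} Set $G=F_2\cap F_3$. First show that $G$ is an irreducible flat of $Y$ (with $X_{F_1}\sim Y_{F_2}$ and $Y_{F_3}\sim Z_{F_4}$ given). It is a flat because $\V(F_2)\cap\V(F_3)$ meets $r(Y)$ exactly in the roots indexed by $F_2\cap F_3$.

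Irreducibility is the main obstacle: we need $\V(F_2)\cap\V(F_3)=\V(F_2\cap F_3\cap I_Y)$. The plan is to use that $R(Y)$ is a basis-like set. By the subword complex theory used in \cite{BC}, $\{\rx_Y(i)\}_{i\in I_Y}$ is linearly independent, since a facet of $\SC(Q,\pi)$ gives a basis of $V_Y$ when $Y$ is irreducible. Granting this, $\V(F_2)$ and $\V(F_3)$ are spanned by subsets $A,B$ of a common basis. Hence their intersection is spanned by $A\cap B$, that is, by the roots indexed by $F_2\cap F_3\cap I_Y$. If that independence is not available in general, I would instead argue via $\F$ and the restriction of root subsystems (Dyer/Deodhar).

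Next, transport along $l_{F_2,F_1}$ and $l_{F_3,F_4}$. Because these linear isomorphisms match root functions and maximal faces, they send flats to flats and irreducible flats to irreducible flats. Moreover $X_{\F(l_{F_2,F_1}(\V(G)))}\sim Y_G\sim Z_{\F(l_{F_3,F_4}(\V(G)))}$, by applying Lemma~\ref{lem:flats_quadruples_BC} to subflats and noting that a subflat of $X_F$ is a subflat of $X$, so flats of flats are flats. Transitivity of $\sim$ then gives the composite.

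\textbf{Step 2: identities, associativity, and zero.} The identity on $X$ is $([n_X],[n_X])$; the formula reduces to $\F(\V(F))=F$. For associativity, I would show that both bracketings equal the pair obtained by pulling back $\V(F_2\cap F_3)\cap l_{F_4,F_3}^{-1}\cdots$. Concretely, the composite is encoded by the partial linear isomorphism $l_{F_3,F_4}\circ l_{F_1,F_2}$ restricted to $l_{F_2,F_1}(\V(F_2)\cap\V(F_3))$. Composition of morphisms therefore corresponds to composition of partial linear isomorphisms, that is, of spans of subspace inclusions, which is associative. The bijection $\gamma$ of the composite must agree with the composite of the $\gamma$'s on the subflat; this holds because $\gamma$ is determined by matching roots (Remark~\ref{rem:no_swapping_identical_letters} handles repeated letters through the root configuration).

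Finally, $\mathbf{0}$ has only the empty flat. So $\Mor(\mathbf 0,X)=\{0_{\mathbf 0,X}\}$ and $\Mor(X,\mathbf 0)=\{0_{X,\mathbf 0}\}$, making $\mathbf{0}$ a zero object. The morphism $0_{X,Y}=(\emptyset,\emptyset)$ factors through $\mathbf 0$ and absorbs under composition, since the intersection involving $\emptyset$ is empty.
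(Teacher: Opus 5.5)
Your Step 1 rests on a false lemma. The claim that $\{\rx_Y(i)\}_{i\in I_Y}$ is linearly independent whenever $Y$ is irreducible is exactly the root-independence condition that defines the proper subcategory $\mathscr{D}$. The paper's own Examples~\ref{S3} and~\ref{S4} violate it; for instance, the facet of Example~\ref{S3} carries $\alpha_1,\alpha_2,-(\alpha_1+\alpha_2)$.

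Worse, the equality $\V(F_2)\cap\V(F_3)=\V(F_2\cap F_3\cap I_Y)$ that you set as your target is itself false, so your fallback via Dyer--Deodhar cannot rescue it. Take $Y=(S_4,\,s_1s_2s_3s_1s_2s_3,\,s_1s_2,\,\{1,2,3,6\})$, whose root function is $\alpha_1,\alpha_2,\alpha_3,\alpha_1,\alpha_1+\alpha_2,\alpha_1+\alpha_2+\alpha_3$. Then $F_2=\{1,2,4,5\}$ (the flat of $\operatorname{span}(\alpha_1,\alpha_2)$) and $F_3=\{3,5,6\}$ (the flat of $\operatorname{span}(\alpha_3,\alpha_1+\alpha_2+\alpha_3)$) are irreducible flats. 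However, $F_2\cap F_3=\{5\}$ is a flat spanning $\mathbb{R}(\alpha_1+\alpha_2)$ that contains no folded position, so it is not irreducible.

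The same example breaks your associativity argument. For $a=i_{F_2}$ and $b=r_{F_3}$, the paper's formula gives $b\circ a=0$, since $F_2\cap F_3\cap I_Y=\emptyset$. Your ``partial linear isomorphism restricted to $l_{F_2,F_1}(\V(F_2)\cap\V(F_3))$'' is instead a nonzero map on the line $\mathbb{R}(\alpha_1+\alpha_2)$. That map corresponds to no morphism at all, because the corresponding flat of $Y_{F_2}$ is not irreducible. So composition in $\mathscr{C}$ is not composition of partial linear isomorphisms on intersections of subspaces.

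The point you missed is that the definition never needs $F_2\cap F_3$ to be irreducible. It uses $G=\F(\V(F_2\cap F_3\cap I_Y))$, which has three properties for free:
\begin{enumerate}
\item it is irreducible, since its folded positions contain $F_2\cap F_3\cap I_Y$, whose roots already span $\V(G)$;
\item it lies in $F_2\cap F_3$, because $F_2$ and $F_3$ are flats and $\V(F_2\cap F_3\cap I_Y)\subseteq\V(F_2)\cap\V(F_3)$;
\item it satisfies $G\cap I_Y=F_2\cap F_3\cap I_Y$.
\end{enumerate}
This is exactly how the paper proceeds.

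For associativity, do the bookkeeping on folded positions rather than on subspaces. An irreducible flat is determined by its folded positions, and equivalences preserve maximal faces. Hence a morphism is encoded by a partial bijection between $I_X$ and $I_Y$, and the composite corresponds to composing these partial bijections. Both bracketings of $c\circ b\circ a$ then have the same middle folded set $F_2\cap F_3\cap I_Y\cap\gamma^{-1}(F_5)$ with $\gamma=\gamma_{F_3,F_4}$; this is the paper's argument.

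Your identity and zero-object parts are essentially fine. One correction there: checking $(F_1,F_2)\circ\id_X=(F_1,F_2)$ uses irreducibility, i.e.\ $\F(\V(F_1\cap I_X))=F_1$, not merely $\F(\V(F))=F$.
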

\begin{proof}
    Given $X,Y,Z\in \CC$, $a=(F_1,F_2)\in \Mor(X,Y)$ and $b=(F_3,F_4)\in \Mor(Y,Z)$, the intersection $F_2\cap F_3\cap I_Y$ gives an irreducible flat $\F(\V(F_2\cap F_3\cap I_Y))$ of $Y$, which is a subflat of $F_2$ and $F_3$. Since we have $X_{F_1}\sim Y_{F_2}$ and $Y_{F_3}\sim Z_{F_4}$ by the definition of morphisms, Equation~\eqref{eq:composition_C} is well defined. \par
    The identity $\id_X\in \Mor (X,X)$ is given by $([n_X],[n_X])$. \par
    To prove the associativity, we fix an object $T\in \CC$ and a morphism $c=(F_5,F_6)\in\Mor(Z,T)$. We need to prove $(c\circ b)\circ a=c\circ (b\circ a)$. The morphism $b$ gives an order preserving bijection $\gamma$ between $F_3\cap I_Y$ and $F_4\cap I_Z$. The bijection between $F_2\cap\gamma^{-1}(F_4\cap F_5\cap I_Z)$ and $\gamma(F_2\cap F_3 \cap I_Y)\cap F_5$ induced by $\gamma$ gives the equality $(c\circ b)\circ a=c\circ (b\circ a)$ via the equivalence $Y_{F_3}\sim Z_{F_4}$.
\end{proof}

\begin{proposition} \label{prop:morphisms_in_C_finite}
    For any two quadruples $X,Y\in \CC$, the morphism set $\Mor(X,Y)$ is finite.
\end{proposition}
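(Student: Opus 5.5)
The plan is to bound $\Mor(X,Y)$ by a product of two finite sets. By Definition~\ref{def:morphisms_in_C}, every morphism is a pair $(F_1,F_2)$ where $F_1$ is an irreducible flat of $X$ and $F_2$ is an irreducible flat of $Y$. This gives an injection
\[
\Mor(X,Y)\hookrightarrow \Irr(X)\times \Irr(Y),
\]
and the condition $X_{F_1}\sim Y_{F_2}$ only cuts the set down further. Since a morphism is literally the pair of flats, and not a pair together with a choice of equivalence, no extra data has to be counted.

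It therefore suffices to show that $\Irr(X)$ is finite for any $X\in\CC$. A flat of $X$ is by Definition~\ref{def:flats} a subset $\F(V)\subseteq[n_X]$, so the set of all flats embeds into the power set of $[n_X]$. The word $Q_X$ is finite, so $n_X<\infty$, and hence there are at most $2^{n_X}$ flats. The irreducible flats form a subset of these, so $|\Irr(X)|\le 2^{n_X}$. The same bound gives $|\Irr(Y)|\le 2^{n_Y}$, and therefore $|\Mor(X,Y)|\le 2^{n_X+n_Y}$.

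The only point needing care is that infinitely many subspaces $V\subseteq V_X$ do not yield infinitely many morphisms. This is automatic, because a morphism records the flat $\F(V)$ as a subset of $[n_X]$, and many subspaces $V$ give the same flat. The remark $\F(\V(F))=F$ confirms that flats are determined by finite index data. I expect no genuine obstacle; the proof is a short counting argument.
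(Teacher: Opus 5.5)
Your proposal is correct and follows the paper's own proof, which consists of the single observation that every flat of $X$ is a subset of $[n_X]$. You spell out the same counting argument with the explicit bound $|\Mor(X,Y)|\le 2^{n_X+n_Y}$.
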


\begin{proof}
    Any flat of $X$ is a subset of $[n_X]$.
\end{proof}

Now we describe monomorphisms and epimorphisms in $\mathscr{C}$.

\begin{proposition} \label{prop:epi_mono_in_C}
    Given $X,Y\in \CC$ and a morphism $f=(F_1,F_2)\in \Mor(X,Y)$, $f$ is a monomorphism if and only if $F_1=[n_X]$ and $f$ is an epimorphism if and only if $F_2=[n_Y]$.
\end{proposition}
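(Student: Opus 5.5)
By the symmetry $(F_1,F_2)\in\Mor(X,Y)\iff(F_2,F_1)\in\Mor(Y,X)$, and because the composition formula \eqref{eq:composition_C} is symmetric under reversing all pairs, the epimorphism statement is the dual of the monomorphism statement. So I only treat monomorphisms. Throughout I use two facts about composition. First, for $g=(G_1,G_2)\in\Mor(Z,X)$ and $f=(F_1,F_2)$, the composite $f\circ g$ has first component $\F(l_{G_2,G_1}(\V(G_2\cap F_1\cap I_X)))$. Second, the irreducible flat $G_2$ satisfies $G_2=\F(\V(G_2\cap I_X))$.

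\emph{Sufficiency.} Suppose $F_1=[n_X]$. Then $G_2\cap F_1\cap I_X=G_2\cap I_X$, and the composite is
\[
f\circ g=\bigl(\F(l_{G_2,G_1}(\V(G_2))),\;\F(l_{F_1,F_2}(\V(G_2)))\bigr)=\bigl(G_1,\;\F(l_{F_1,F_2}(\V(G_2)))\bigr).
\]
The second component is the image of the flat $G_2$ under the equivalence $X=X_{F_1}\sim Y_{F_2}$. Here I would check that $\gamma_{F_1,F_2}$ sends flats of $X$ bijectively onto flats of $Y$ contained in $F_2$, which holds because $l_{F_1,F_2}$ is a linear isomorphism matching the root functions via Lemma~\ref{lem:flats_quadruples_BC}. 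Hence the composite determines $G_2$. Now suppose $f\circ g=f\circ h$ with $h=(H_1,H_2)$. Then $G_1=H_1$ and $G_2=H_2$. A morphism is by definition just the pair of flats, with the equivalence $X_{G_1}\sim Y_{G_2}$ only required to exist. So $g=h$ and $f$ is a monomorphism.

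\emph{Necessity.} Suppose $F_1\neq[n_X]$. I will produce $g\ne h$ in $\Mor(X,X)$ with $f\circ g=f\circ h$. The natural choice is $g=\id_X=([n_X],[n_X])$ and $h=(F_1,F_1)$. Since $F_1$ is irreducible, $X_{F_1}\sim X_{F_1}$ and $h$ is a morphism.
\begin{itemize}
\item For the first composite, $f\circ\id_X=(\F(\V(F_1\cap I_X)),\dots)=(F_1,\F(l_{F_1,F_2}(\V(F_1\cap I_X))))=(F_1,F_2)$, using $F_1=\F(\V(F_1\cap I_X))$ and $l_{F_1,F_2}(\V(F_1))=\V(F_2)$.
\item For the second composite, $f\circ h$ has $G_2\cap F_1\cap I_X=F_1\cap I_X$, so the same computation again gives $(F_1,F_2)$.
\end{itemize}
Since $F_1\neq[n_X]$, we have $h\neq\id_X$, so $f$ is not a monomorphism.

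The main obstacle is the sufficiency step. It requires carefully justifying that $\F(l_{F_1,F_2}(\V(G_2)))$ recovers $G_2$, i.e.\ that an equivalence of quadruples induces a bijection on flats compatible with $\F$ and $\V$. I would derive this from the identity $\rx_X(i)=\rx_{X_F}(\bx_F(i))$ in Lemma~\ref{lem:flats_quadruples_BC}, together with the fact that equivalence transports root configurations via the induced linear isomorphism $l_{F_1,F_2}$.
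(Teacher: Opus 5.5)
Your proof is correct. Its sufficiency half and the reduction of the epimorphism case coincide with the paper's. The paper also shows that the first components of $f\circ g$ and $f\circ h$ agree, and it recovers the second flat from $\F(l_{F_1,F_2}(\V(\cdot)))$ by injectivity of $l_{F_1,F_2}$ together with $H=\F(\V(H))$. You are right that this step needs the equivalence to transport root functions, which the paper hides behind the identity $\V(\F(V))=V$.

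The genuine difference is in necessity. The paper picks a complementary flat $F_1^{\perp}$ and uses irreducibility of $X$ to argue that it is nonempty. It then shows that the embedding $i_{F_1^{\perp}}\colon X_{F_1^{\perp}}\hookrightarrow X$ satisfies $f\circ i_{F_1^{\perp}}=0=f\circ 0$, because $F_1\cap F_1^{\perp}=\emptyset$. This is a ``nonzero kernel'' argument.

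You instead use the idempotent $e=(F_1,F_1)=i_{F_1}\circ r_{F_1}$ and observe that $f\circ e=f=f\circ\id_X$ with $e\neq\id_X$. Your route is shorter and more elementary. It needs no complementary flat and no argument that one is nonempty, only that $F_1$ is an irreducible flat different from $[n_X]$. Its single computation also does not depend on which equivalence $X_{F_1}\sim X_{F_1}$ is used to define $l_{F_1,F_1}$, since any such $l$ maps $\V(F_1)$ onto itself.

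The paper's route has the conceptual advantage of exhibiting a nonzero subobject killed by $f$. That fits its later reliance on zero morphisms and complementary flats in admissible sequences.

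Finally, your dualization is an improvement over the paper's ``completely similar'' remark for epimorphisms. You check directly on the composition formula that $(F_1,F_2)\mapsto(F_2,F_1)$ is an identity-on-objects isomorphism $\mathscr{C}\cong\mathscr{C}^{op}$, which makes that remark precise.
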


\begin{proof}
    Suppose that $f$ is a monomorphism. If $F_1\neq [n_X]$, the vector space $\V(F_1)$ is a proper subspace of $V_X$. Choose a complementary flat $F_1^{\perp}$ of $F_1$ in $X$. Since $\V(F_1)$ is a proper subspace, the complementary space $\V(F_1^{\perp})$ is non zero. The irreducibility of $X$ implies that $F_1^{\perp}$ is not zero. \par
    
    The order preserving bijection from $[n_{X_{F_1^{\perp}}}]$ to $F_1^{\perp}$ gives a nonzero morphism $h_1=([n_{X_{F_1^{\perp}}}],F_1^{\perp})$ in $\Mor(X_{F_1},X)$.
    By the definition of a complementary flat, we have $\V(F_1^{\perp})\cap V(F_1)=\{\mathbf{0_{V_X}}\}$. We have \begin{align}
        f\circ h_1=0_{X_{F_1^{\perp}},Y}=f \circ 0_{X_{F_1^{\perp}},X}. \nonumber
    \end{align}
    The assumption that $f$ is a monomorphism implies that $h_1=0_{X_{F_1^{\perp}},X}$, a contradiction. Thus  $F_1= [n_X]$. \par

   Conversely, suppose that $F_1= [n_X]$. The intersection of the vector space $\V(F)$ and $\V(F_1)=V_X$ is $\V(F)$ itself for any irreducible flat $F$. For any $Z\in \CC$ and $h_i=(H_{i,1},H_{i,2})\in \Mor(Z,X)$ where $i\in \{1,2\}$, we have $f\circ h_i=(H_{i,1}, \F(l_{F_1,F_2}(\V(H_{i,2}))))$. \par
   
   As a result, the equation $f\circ h_1=f\circ h_2$ implies that $H_{1,1}=H_{2,1}$ and $\F(l_{F_1,F_2}(\V(H_{1,2})))=\F(l_{F_1,F_2}(\V(H_{2,2})))$. We have 
   \begin{align}
       l_{F_1,F_2}(\V(H_{1,2}))=\V(\F(l_{F_1,F_2}(\V(H_{1,2}))))=\V(\F(l_{F_1,F_2}(\V(H_{2,2}))))=l_{F_1,F_2}(\V(H_{2,2})). \nonumber
   \end{align}
   Since $l_{F_1,F_2}$ is a linear isomorphism, we have $V(H_{1,2})=V(H_{2,2})$. By Definition~\ref{def:flats}, we have \begin{align}
       H_{1,2}=\F(\V(H_{1,2}))=\F(\V(H_{2,2}))=H_{2,2}. \nonumber
   \end{align}
   Thus the two morphisms $h_1$ and $h_2$ are equal in $\Mor(Z,X)$, which implies that $f$ is a monomorphism. \par

   The proof for epimorphisms is completely similar to the proof for monomorphisms by the symmetry between the morphism sets $\Mor(X,Y)$ and $\Mor(Y,X)$.
\end{proof}

We have the following observation: 

\begin{proposition} \label{prop:iso_in_C}
    Given $X,Y\in \CC$, $X$ is isomorphic to $Y$ in $\mathscr{C}$ if and only if $X\sim Y$.
\end{proposition}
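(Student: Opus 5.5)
For the direction $X\sim Y \Rightarrow X\cong Y$, I will write down the isomorphism directly. Since $[n_X]$ is an irreducible flat of $X$ with $X_{[n_X]}=X$ (and likewise for $Y$), the hypothesis $X\sim Y$ gives a morphism $f=([n_X],[n_Y])\in\Mor(X,Y)$. The same reasoning gives $g=([n_Y],[n_X])\in\Mor(Y,X)$. I then compute the composite $g\circ f$ using Equation~\eqref{eq:composition_C}. The middle intersection is $[n_Y]\cap[n_Y]\cap I_Y=I_Y$, and by irreducibility $\V(I_Y)=V_Y$. Also $l_{[n_X],[n_Y]}$ and $l_{[n_Y],[n_X]}$ are mutually inverse linear isomorphisms between $V_X$ and $V_Y$. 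So both components of $g\circ f$ equal $\F(V_X)=[n_X]$, that is, $g\circ f=\id_X$. By symmetry $f\circ g=\id_Y$.

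For the converse, suppose $f=(F_1,F_2)\in\Mor(X,Y)$ has a two-sided inverse $g$. Then $f$ is both a monomorphism and an epimorphism, so Proposition~\ref{prop:epi_mono_in_C} gives $F_1=[n_X]$ and $F_2=[n_Y]$. By Definition~\ref{def:morphisms_in_C}, $X_{[n_X]}\sim Y_{[n_Y]}$. It therefore remains to check that the induced quadruple $X_{[n_X]}$ attached to the full flat is $X$ itself, up to $\sim$, and similarly for $Y$.

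That identification is the step I expect to need the most care. It has to be extracted from Lemma~\ref{lem:flats_quadruples_BC} and the construction of $\beta_F$. For $F=[n_X]$ the set $B_k$ is $([j_k-1]\setminus I_X)\setminus F=\emptyset$, so $\beta_k=\alpha_{i_k}$. Hence the simple roots of $X_F$ are exactly the simple roots of $W_X$ that occur in $Q_X$, in the same positions, and the root functions agree via the identity map $\bx_F$. Irreducibility of $X$ (with $R(X)$ spanning $V_X$) ensures that every generator of $W_X$ is accounted for, so $W_{X_F}$ is $W_X$ up to reassignment of generators. The word, the element $\pi$ and the face $I$ are then recovered position by position. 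I would state this as ``the quadruple induced by the full flat is equivalent to $X$'', arguing from the Bergeron--Ceballos construction in \cite{BC}. Transitivity of $\sim$ then gives $X\sim Y$.
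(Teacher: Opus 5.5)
Your proposal is correct and follows the paper's route for the substantive direction. An isomorphism is both mono and epi, so Proposition~\ref{prop:epi_mono_in_C} forces $f=([n_X],[n_Y])$, and the definition of morphisms then gives $X_{[n_X]}\sim Y_{[n_Y]}$. The paper simply asserts $X_{[n_X]}=X$ and leaves $X\sim Y\Rightarrow X\cong Y$ implicit, so your computation of $g\circ f=\id_X$ via Equation~\eqref{eq:composition_C} and your check through $\beta_F$ (with $B_k=\emptyset$) fill in details rather than change the approach.
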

\begin{proof}
    If $X\simeq Y$ in $\mathscr{C}$, we have an isomorphism $f: X\to Y$ by definition. $f$ is both a monomorphism and an epimorphism. By Proposition~\ref{prop:epi_mono_in_C}, we have $f=([n_X,n_Y])$. Since $X_{[n_X]}=X$ and $Y_{[n_Y]}=Y$, we have $X\sim Y$ by Definition~\ref{def:morphisms_in_C}.
\end{proof}

The morphisms in the category $\mathscr{C}$ can thus be interpreted either as spans of monomorphisms  or as cospans of epimorphisms of quadruples, 
with composition given  essentially by pullbacks/pushouts in a reasonable sense.

 Given any monomorphism $X\overset{f}{\hookrightarrow}Y$ in $\mathscr{C}$, there exists an irreducible flat $F$ of $Y$ such that $X \sim Y_F$. Given any epimorphism $X\overset{f}{\twoheadrightarrow}Y$ in $\mathscr{C}$, there exists an irreducible flat $F$ of $X$ such that $Y \sim X_F$.

\begin{definition} \label{def:subquotients_in_C}
    The object $X_F$ in $\mathscr{C}$ with the \textbf{canonical embedding} $X_F\overset{i_F}{\hookrightarrow}X$ in $\mathscr{C}$ represents a \textbf{subobject} of $X$, for any irreducible flat $F$ of $X$, where $i_F=([n_{X_F}],F)$. The set of subobjects of an object $X\in \mathcal{C}$ is parameterized by $\Irr(X)$. Symmetrically, we have a \textbf{canonical retraction} $X\overset{r_F}{\twoheadrightarrow}X_F$, where $r_F=(F,[n_{X_F}])$. This represents a \textbf{quotient object} of $X$.
\end{definition}

 Having introduced canonical embeddings and retractions, we can now interpret morphisms in $\mathscr{C}$ as certain linear maps of vector spaces. More precisely, we have the following forgetful functor $\mbox{For}: \mathscr{C} \to \Vect_{\mathbb{R}}$: on objects, $\mbox{For}: X \mapsto V_X$, and $(F_1, F_2) =  X \overset{r_{F_1}}\to X_{F_1} \overset\sim\to Y_{F_2} \overset{i_{F_2}}\to Y \in \Mor(X, Y)$ is sent by $\mbox{For}$ to the composition $i \circ l_{F_1, F_2} \circ p$, where $p: V_X \to V_{X_{F_1}}$ is the projection and $i: V_{Y_{F_2}} \to V_Y$ is the natural embedding. It is straightforward to check that this does indeed define a covariant functor $\mbox{For}: \mathscr{C} \to \Vect_{\mathbb{R}}$. Essentially by definition, this forgetful functor is faithful (but not full). It thus comes with no surprise that it reflects monomorphisms and epimorphisms.




To introduce the dual Hopf Hall algebra, we need to define admissible sequences for $\mathscr{C}$.

\begin{definition}
    A sequence of composable morphisms $A \overset{f}{\hookrightarrow} B \overset{g}{\twoheadrightarrow} C$ in $\mathscr{C}$ is called an \textbf{admissible sequence} if $f$ is a monomorphism, $g$ is an epimorphism and there exists an irreducible flat $F$ of $B$ with a complementary flat $F^{\perp}$ such that $A\sim B_F$ and $C\sim B_{F^{\perp}}$. Equivalently, it is admissible if and only if it is sent by the functor $\mbox{For}$ to a short exact sequence of vector spaces (which is automatically split).

\end{definition}

\begin{remark} \label{rem:not_weak_(co)kernels}
A word of caution: in thus defined admissible sequences, we have $h \circ f = 0$, but it is not true in general that $f$ is the kernel of $g$ or that $g$ is the cokernel of $f$. Furthermore, it is not even true that $g$ is a weak cokernel of $f$ (i.e. $g \circ f = 0$ and for each $g': B \to C'$, there exist a(not necessarily unique) morphism $l: C \to C'$ such that $g' = l \circ g$) or that $f$ is a weak kernel of $g$.  A weak (co)kernel of a given morphism, if exists, is unique up to non-unique isomorphism. In $\mathscr{C}$, the isomorphism class of an end term of an admissible sequence is not determined by the morphism between the other two terms. This is illustrated by the following two examples. 
\end{remark}





\begin{example}\label{S3}
    We consider the following object: $B=(S_3, s_1s_2s_2s_1s_1s_2, \pi_B=s_2s_1s_2,\{1,2,5\})$. The root function is \begin{align}
       & \rx_B(1)=\alpha_1, \rx_B(2)=\alpha_2, \rx_B(3)=\alpha_2, \nonumber \\
        & \rx_B(4)=\alpha_1+\alpha_2, \rx_B(5)=-(\alpha_1+\alpha_2),\rx_B(6)=\alpha_1. \nonumber
    \end{align}
    Take $A$ to be the subobject of $B$ given by the flat $\{1,6\}$. We have $A\sim (S_2, ss, s, \{1\})$. Take $C$ to be the subobject of $B$ given by the flat $\{2,3\}$ and $C'$ to be the subobject of $B$ given by the flat $\{4.5\}$. We have $C\sim (S_2, ss, s, \{1\})$ and $C'\sim (S_2, ss, s, \{2\})$. \par
    
    The sequences $A \overset{f}{\hookrightarrow} B \overset{h}{\twoheadrightarrow} C$ and $A \overset{f}{\hookrightarrow} B \overset{h'}{\twoheadrightarrow} C'$ are all admissible. In particular, $h\circ f=h'\circ f =0$. Here $f=([2],\{1,6\})$, $h=(\{2,3\},[2])$ and $h'=(\{4,5\},[2])$. However, since $C$ and $C'$ are not isomorphic, we can not find a morphism $l$ from $C$ to $C'$ such that $l\circ h=h'$. 
\end{example}
\begin{example}\label{S4}
    We consider the following object: \[B=(S_4, s_1s_2s_3s_1s_2s_2s_3, \pi_B=s_1s_2,\{1,2,3,5,7\}).\] The root function is \begin{align}
       & \rx_B(1)=\alpha_1, \rx_B(2)=\alpha_2, \rx_B(3)=\alpha_3, \nonumber \\
        & \rx_B(4)=\alpha_1, \rx_B(5)=\alpha_1+\alpha_2, \nonumber \\
        & \rx_B(6)=\alpha_1+\alpha_2, \rx_B(7)=\alpha_1+\alpha_2+\alpha_3 \nonumber
    \end{align}
    Take $A$ to be the subobject of $B$ given by the flat $\{1,3,4\}$, which is induced by the subspace spanned by $\{\alpha_1,\alpha_3\}$. We have $A\sim (S_2\times S_2, ss's, s, \{1,2\})$. Take $C$ to be the subobject of $B$ given by the flat $\{5,6\}$ and $C'$ to be the subobject of $B$ given by the flat $\{7\}$. We have $C\sim (S_2, ss, s, \{1\})$ and $C'\sim (S_2, s, e, \{1\})$. \par
    
    The sequences $A \overset{f}{\hookrightarrow} B \overset{h}{\twoheadrightarrow} C$ and $A \overset{f}{\hookrightarrow} B \overset{h'}{\twoheadrightarrow} C'$ are all admissible. In particular, $h\circ f=h'\circ f =0$. Here $f=([3],\{1,3,4\})$, $h=(\{5,6\},[2])$ and $h'=(\{7\},[1])$. However, since $C$ and $C'$ are not isomorphic, we can not find a morphism $l$ from $C$ to $C'$ such that $l\circ h=h'$. 
\end{example}


\begin{definition} \label{def:direct_sum_in_C}

Given $X,Y\in \CC$, the direct product of $W_X$ and $W_Y$ and the concatenation of the words $Q_X$ and $Q_Y$ defines a new element $X\oplus Y$ in $\mathscr{C}$, which is called the \textbf{direct sum} of $X$ and $Y$. 
\end{definition}

By definition, we have canonical embeddings $X \hookrightarrow X \oplus Y, Y \hookrightarrow X \oplus Y$ given by $([n_X], [n_X])$ and $([n_Y], n_X + [n_Y])$, respectively. The opposite morphisms give canonical retractions. 

\begin{remark}
\label{rem:direct_sum_not_(co)product}
Notice that $X\oplus Y$ is neither the product nor the coproduct of $X$ and $Y$ in the category $\mathscr{C}$. \par

For example, if we take $Y=X$ and denote the embedding from $X$ to the first (second) factor of $X\oplus X$ by $i_1$ (resp. $i_2$). There exists no morphism $h$ from $X\oplus X$ to $X$ such that the following diagram commutes:
\[
\xymatrix{
& X    & \\
X \ar[ur]^{id}  \ar[r]^{i_1} & X\oplus X \ar[u]^{h}  & X \ar[ul]_{id} \ar[l]_{i_2}
}
\]
Thus $X\oplus Y$ is not the coproduct of $X$ and $Y$ in $\mathscr{C}$ in general. A similar discussion also shows that $X\oplus Y$ is not the product of $X$ and $Y$ in $\mathscr{C}$ in general. 


\end{remark}
However, the direct sum $\oplus$ defines a symmetric monoidal structure on $\mathscr{C}$. Moreover, it satisfies the following properties.
\begin{proposition}\label{prop:dir_sum_universal_property}
    Given objects $X$, $Y$ and $Z$ in $\mathscr{C}$ with two morphisms $h:X\to Z$ and $g:Y\to Z$, there exists a unique morphism $h\oplus g$ from $X\oplus Y$ to $Z\oplus Z$ such that $p_1\circ (h\oplus g) \circ i_X = h$ and $p_2\circ (h\oplus g) \circ i_Y = g$. Here $p_1$ (resp. $p_2$) is the projection from the first (resp. second) factor of $Z\oplus Z$ to $Z$. 
\end{proposition}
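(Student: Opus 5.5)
The plan is to build $h\oplus g$ explicitly from the flat data of $h$ and $g$, and then show uniqueness by proving that the two composite conditions pin down both flats of any candidate morphism.

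\textbf{Existence.} Write $h=(H_1,H_2)$ with $H_1\in\Irr(X)$, $H_2\in\Irr(Z)$, and $g=(G_1,G_2)$ with $G_1\in\Irr(Y)$, $G_2\in\Irr(Z)$. Positions of $X\oplus Y$ are $[n_X]\sqcup(n_X+[n_Y])$ and positions of $Z\oplus Z$ are $[n_Z]\sqcup(n_Z+[n_Z])$. Since $W_{X\oplus Y}=W_X\times W_Y$, the root function of $X\oplus Y$ restricts to $\rx_X$ on the first block and to $\rx_Y$ on the shifted second block; the same holds for $Z\oplus Z$. Moreover $V_{X\oplus Y}=V_X\oplus V_Y$. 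Set
\[
h\oplus g:=\bigl(H_1\sqcup (n_X+G_1),\; H_2\sqcup(n_Z+G_2)\bigr).
\]
I will check three things.
\begin{itemize}
\item $H_1\sqcup(n_X+G_1)$ is a flat. It is $\F(\V(H_1)\oplus\V(G_1))$, because a root of $X\oplus Y$ lies in either $V_X$ or $V_Y$, and it lies in $\V(H_1)\oplus\V(G_1)$ exactly when it lies in the corresponding summand.
\item This flat is irreducible. The maximal face of $X\oplus Y$ is $I_X\sqcup(n_X+I_Y)$, so irreducibility follows from that of $H_1$ and $G_1$.
\item The two restricted quadruples are equivalent. $(X\oplus Y)_{H_1\sqcup(n_X+G_1)}\sim X_{H_1}\oplus Y_{G_1}$ by the block structure of Lemma~\ref{lem:flats_quadruples_BC}: the restriction of a direct sum is computed blockwise, since letters from different blocks commute and the roots $\beta_F$ split. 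Likewise for the target side. Hence $X_{H_1}\sim Z_{H_2}$ and $Y_{G_1}\sim Z_{G_2}$ give the required equivalence, via the product of the two generator reassignments.
\end{itemize}

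\textbf{The composite conditions.} Next I compute $p_1\circ(h\oplus g)\circ i_X$ using \eqref{eq:composition_C}.
\begin{itemize}
\item First, $(h\oplus g)\circ i_X$. Intersecting $[n_X]$ with $H_1\sqcup(n_X+G_1)$ and the face gives $H_1\cap I_X$. Its span is $\V(H_1)$, by irreducibility. Transporting along $l$ yields $\V(H_2)$ in the first copy of $Z$. So $(h\oplus g)\circ i_X=(H_1,H_2)$, viewed in $Z\oplus Z$.
\item Composing with $p_1=([n_Z],[n_Z])$ returns $(H_1,H_2)=h$.
\end{itemize}
The symmetric computation with $i_Y$ and $p_2$ gives $g$.

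\textbf{Uniqueness.} Let $(K_1,K_2)$ be any morphism satisfying the two conditions. The main obstacle is showing that $K_1$ splits as $A\sqcup(n_X+B)$ and $K_2$ splits compatibly. A general flat of $X\oplus Y$ need not split: its subspace may be diagonal in $V_X\oplus V_Y$. But a flat is determined by the roots it contains, and every root of $X\oplus Y$ lies in $V_X$ or in $V_Y$. Therefore every flat $K$ is the disjoint union of $K\cap[n_X]$ and $K\cap(n_X+[n_Y])$. Each of these pieces is itself a flat, namely of $\V(K)\cap V_X$ and of $\V(K)\cap V_Y$ respectively.

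Irreducibility also passes to the pieces. The face roots of $K$ span $\V(K)$, and they split into face roots lying in $V_X$ and face roots lying in $V_Y$. Hence $\V(K)=(\V(K)\cap V_X)\oplus(\V(K)\cap V_Y)$, and each summand is spanned by face roots.

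With this splitting in hand, the computation in the existence step shows the following. The condition $p_1\circ(K_1,K_2)\circ i_X=h$ forces $K_1\cap[n_X]=H_1$ and $K_2\cap[n_Z]=H_2$. The condition with $i_Y$ and $p_2$ forces the second pieces to equal $n_X+G_1$ and $n_Z+G_2$. Hence $(K_1,K_2)=h\oplus g$.

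One subtlety needs care. The equivalence $\gamma_{K_1,K_2}$ might a priori send the $X$-block roots into the second copy of $Z$. This is ruled out because the first condition already identifies the image of $\V(K_1)\cap V_X$ under $l_{K_1,K_2}$ with $\V(H_2)$ inside the first copy of $V_Z$. That identification is what I would verify carefully via the forgetful functor $\mbox{For}$, since it is faithful.
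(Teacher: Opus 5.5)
Your construction of $h\oplus g$ and your check of the two composite identities are the same as the paper's proof. That part is fine: you correctly fix the block-diagonal equivalence between $(X\oplus Y)_{H_1\sqcup(n_X+G_1)}$ and $(Z\oplus Z)_{H_2\sqcup(n_Z+G_2)}$.

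\textbf{Where uniqueness breaks.} The failing step is ``$p_1\circ(K_1,K_2)\circ i_X=h$ forces $K_1\cap[n_X]=H_1$ and $K_2\cap[n_Z]=H_2$''. Your splitting $K_1=A\sqcup(n_X+B)$ is correct. However, the equivalence $\gamma_{K_1,K_2}$ may send part of $A$ into the second copy $n_Z+[n_Z]$. The morphism $p_1$ discards exactly that part, and the other condition involves only $i_Y$, so it never sees $A$.

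In your last paragraph, the first condition determines only $\operatorname{pr}_1\circ l_{K_1,K_2}$ on $\V(K_1)\cap V_X$, not the image $l_{K_1,K_2}(\V(K_1)\cap V_X)$. Faithfulness of $\mbox{For}$ cannot help, because $\mbox{For}(p_1)$ is the projection that kills the second summand.

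\textbf{Counterexample.} The step is in fact false. Take any $Z\neq\mathbf{0}$, $X=Z\oplus Z$, $Y=\mathbf{0}$, $h=p_1$ and $g=0$. Then $i_X=\id_X$, $i_Y=0$, and $h\oplus g=([n_Z],[n_Z])$. But $\id_{Z\oplus Z}=([2n_Z],[2n_Z])$ also satisfies $p_1\circ\id\circ i_X=p_1=h$ and $p_2\circ\id\circ i_Y=0=g$.

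The same happens with $Y\neq\mathbf{0}$. Take $Y=Z$, $g=0$, and $f=([2n_Z],[2n_Z])\in\Mor(Z^{\oplus 3},Z\oplus Z)$ with the identity equivalence. Then $f\circ i_Y=0$ because $[2n_Z]\cap(2n_Z+[n_Z])=\emptyset$, while $p_1\circ f\circ i_X=h$.

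\textbf{Status of the statement and a repair.} The paper's proof asserts exactly the same forcing (``the commutative condition force $F_1\cap[n_X]=H_1$'') with no more justification. So the uniqueness clause of the proposition is wrong as stated, not merely under-argued in your write-up.

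What your method does prove is uniqueness once you add the off-diagonal conditions $p_2\circ f\circ i_X=0$ and $p_1\circ f\circ i_Y=0$. To see this, refine $A=A'\sqcup A''$ (and likewise $B$) according to whether $\gamma_{K_1,K_2}$ sends a position to the first or the second copy of $Z$.
\begin{enumerate}
\item The condition $p_2\circ f\circ i_X=0$ gives $A''\cap I_X=\emptyset$.
\item Hence every root at a position of $A''$ lies in $\V(A'\cap I_X)$. Its image under $l_{K_1,K_2}$ therefore lies in the first copy of $V_Z$, yet $\gamma$ places that position in the second copy. This forces $A''=\emptyset$.
\item The remaining conditions then pin down $K_1=H_1\sqcup(n_X+G_1)$ and $K_2=H_2\sqcup(n_Z+G_2)$.
\end{enumerate}
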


\begin{proof}
    Suppose that $h=(H_1,H_2)$ and $g=(G_1,G_2)$. We define the morphism from $X\oplus Y$ to $Z\oplus Z$ by $h\oplus g:=(H_1\sqcup(G_1+n_X), H_2\sqcup (G_2+ n_Z))$. The following diagram commutes:
    \[
\xymatrix{
 X\ar[drr]^{i_X} \ar[ddddrr]_{h} & & & & Y \ar[dll]_{i_Y} \ar[ddddll]^{g} \\
&& X\oplus Y \ar[d]_{h\oplus g} && \\
&& Z\oplus Z \ar@<0.3ex>[dd]^{p_2} \ar@<-0.3ex>[dd]_{p_1}&& \\
\\
&& Z &&
}
\]\par
Moreover, if $h\oplus g=(F_1,F_2)$ is a morphism from $X\oplus Y$ to $Z\oplus Z$ such that the diagram above commutes, it has to be equal to  $(H_1\sqcup(G_1+n_X), H_2\sqcup (G_2+ n_Z))$. The reason is that the definition of the direct sum and the commutative condition force $F_1\cap[n_X]=H_1$ and $F_1\cap[n_X+1,n_X+n_Y]=G_1 +n_X$. 
\end{proof}

\begin{lemma} \label{lem:subquotient_of_direct_sum}
Given an admissible sequence $A \overset{f}\hookrightarrow B_1 \oplus B_2 \overset{g}\twoheadrightarrow C$,  there exist direct sum decompositions $A \overset{\overset{a}\sim}\to A_1 \oplus A_2, C \overset{\overset{c}\sim}\to C_1 \oplus C_2$ such that the maps $f, g$ factor as
\[
f = \left(\begin{matrix}f_1 & 0\\ 0 &f_2\end{matrix}\right)\circ a;
\,\,\,\,\,\,\,\,
g = \left(\begin{matrix}g_1 & 0\\ 0 &g_2\end{matrix}\right)\circ b.
\]
\end{lemma}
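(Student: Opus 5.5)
The plan is to reduce everything to the structure of flats of a direct sum. Write $B = B_1 \oplus B_2$. By Definition~\ref{def:direct_sum_in_C}, $W_B = W_{B_1}\times W_{B_2}$ and $Q_B$ is the concatenation $Q_{B_1}Q_{B_2}$. The key observation is that the root function of $B$ restricted to $[n_{B_1}]$ is $\rx_{B_1}$. On $n_{B_1}+[n_{B_2}]$ it is $\rx_{B_2}$ shifted, viewed in $V_{B_2}$. This holds because letters of $W_{B_1}$ act trivially on roots of $W_{B_2}$. Hence $V_B = V_{B_1}\oplus V_{B_2}$, and every root of $B$ lies in exactly one summand.

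First I will show that every irreducible flat $F$ of $B$ splits. Concretely, I claim $F = F^{(1)} \sqcup (F^{(2)}+n_{B_1})$ with $F^{(j)}$ an irreducible flat of $B_j$, and $B_F \sim (B_1)_{F^{(1)}}\oplus (B_2)_{F^{(2)}}$.

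Set $F^{(1)} = F\cap[n_{B_1}]$ and $F^{(2)} = (F\cap (n_{B_1}+[n_{B_2}]))-n_{B_1}$. Each $F^{(j)}$ is the flat of $B_j$ attached to $\V(F)\cap V_{B_j}$, since $r(B)\cap \V(F)$ splits according to which summand a root lies in. For irreducibility, note $\V(F)$ is spanned by roots of $F\cap I_B$, each of which lies in $V_{B_1}$ or in $V_{B_2}$. Therefore
\[
\V(F) = \V(F^{(1)}\cap I_{B_1})\oplus \V(F^{(2)}\cap I_{B_2}),
\]
and intersecting with each summand shows that $F^{(j)}$ is irreducible in $B_j$.

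For the equivalence $B_F \sim (B_1)_{F^{(1)}} \oplus (B_2)_{F^{(2)}}$, I will use the construction of $X_F$ from Lemma~\ref{lem:flats_quadruples_BC}. Its simple roots $\beta_F$ split into those in $V_{B_1}$ and those in $V_{B_2}$, and these commute. The word of $B_F$ is the subword of $Q_B$ at positions $F$, with letters relabelled by $\beta_F$. Its first $|F^{(1)}|$ letters are exactly the word of $(B_1)_{F^{(1)}}$, because the $B_k$ sets for positions in $F^{(1)}$ only involve letters of $B_1$. The remaining letters give the word of $(B_2)_{F^{(2)}}$: letters from $B_1$ occurring in $B_k$ act trivially on $V_{B_2}$. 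The elements $\pi$ and the faces $I$ match in the same way.

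Next I apply this to the admissible sequence. By definition there is an irreducible flat $F$ of $B$ with complementary flat $F^\perp$ such that $A\sim B_F$, $C\sim B_{F^\perp}$, $f = i_F\circ(\text{iso})$ and $g = (\text{iso})\circ r_{F^\perp}$. Splitting both flats gives $A_j := (B_j)_{F^{(j)}}$ and $C_j := (B_j)_{(F^\perp)^{(j)}}$. The isomorphisms $a$ and $c$ come from the equivalences above. The maps $f_j$ and $g_j$ are the canonical embeddings $i_{F^{(j)}}$ and retractions $r_{(F^\perp)^{(j)}}$. Complementarity of each piece is automatic from
\[
\V(F)\oplus\V(F^\perp) = V_{B_1}\oplus V_{B_2}
\]
together with both sides being compatible with the splitting.

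The block-diagonal factorisation $f = \mathrm{diag}(f_1,f_2)\circ a$ is then a check of the composition formula~\eqref{eq:composition_C}. The morphism $\mathrm{diag}(f_1,f_2)$ is taken as in Proposition~\ref{prop:dir_sum_universal_property}, namely $([n_{A_1}]\sqcup(n_{A_1}+[n_{A_2}]),\ F^{(1)}\sqcup(F^{(2)}+n_{B_1}))$. Both sides are pairs of flats, determined by their spans, so it suffices to compare the images under $\mbox{For}$, which is faithful. The factorisation of $g$ is the same argument; the $b$ in the statement should presumably read $c^{-1}$.

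I expect the main obstacle to be the word-level equivalence $B_F\sim (B_1)_{F^{(1)}}\oplus(B_2)_{F^{(2)}}$ in Definition~\ref{def:equivalence}. It needs careful bookkeeping of the $B_k$ sets and of the face $I$. It also needs to respect Remark~\ref{rem:no_swapping_identical_letters}: since the letters from $B_1$ already precede those from $B_2$ in $Q_B$, no commutations are required, which avoids the issue.
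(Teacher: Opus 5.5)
Your proposal is correct and follows essentially the paper's route: like the paper, you split the flat underlying $f$ (and the one underlying $g$) by intersecting with $[n_{B_1}]$ and $n_{B_1}+[n_{B_2}]$, take the resulting pieces as $A_1, A_2$ (resp.\ $C_1, C_2$), use that the two Coxeter factors commute to get $A \sim A_1\oplus A_2$, and read off $f_j, g_j$ from the pieces. You additionally supply details the paper leaves implicit (irreducibility of each piece and the word-level equivalence via $\beta_F$); the only nit is that the corrected factorisation should read $g = c^{-1}\circ\mathrm{diag}(g_1,g_2)$, with $c^{-1}$ composed on the left rather than substituted for $b$ in place.
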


\begin{proof}
Suppose that $f=(F_1,F_2)$, so $A_{F_1}\sim (B_1\oplus B_2)_{F_2}$. We take a decomposition $F_2=(F_2\cap[n_{B_1}])\sqcup (F_2\cap[n_{B_1}+1,n_{B_2}+n_{B_1}])$. It induces a decomposition  $F_1=F_{1,1}\sqcup F_{1,2}$ such that $A_{F_{1,1}}\sim (B_1)_{F_2\cap[n_{B_1}]}$ and $A_{F_{1,2}}\sim (B_2)_{F_2\cap[n_{B_1}+1,n_{B_2}+n_{B_1}]}$. Since $f$ is a monomorphism, we have $F_1=[n_A]$. Take $A_1=F_{1,1}$ and $A_2=F_{1,2}$. Notice that $A_1$ is isomorphic to a subobject of $B_1$ and $A_2$ is isomorphic to a subobject of $B_2$. The fact that the Coxeter groups of $B_1$ and $B_2$ commute implies that the Coxeter groups of $A_1$ and $A_2$ commute. We have the decomposition of $A$ as desired. For the maps, we take $f_1=(F_{1,1}, F_2\cap[n_{B_1}])$ and $f_2=(F_{1,2}, F_2\cap[n_{B_1}+1,n_{B_2}+n_{B_1}])$.
 Same argument works for $C$ and $g$.
\end{proof}

\begin{definition}\label{def:cofinitary}
     A category $\mathcal{L}$ is \textbf{cofinitary} if for any $X\in \mathcal{L}$, there are only finitely many subobjects and quotient objects of $X$ (up to isomorphism).
 \end{definition}
 \begin{lemma} \label{lem:C_cofinitary}
     The category $\mathscr{C}$ is a cofinitary category. 
 \end{lemma}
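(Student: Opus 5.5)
The plan is to reduce cofinitarity to counting flats, using the descriptions of monomorphisms and epimorphisms from Proposition~\ref{prop:epi_mono_in_C}. Fix $X\in\CC$. A subobject of $X$ is an isomorphism class of monomorphisms $f\colon A\hookrightarrow X$, where two are identified if they differ by precomposition with an isomorphism. By Proposition~\ref{prop:epi_mono_in_C}, any such $f$ has the form $f=([n_A],F)$ with $F\in\Irr(X)$ and $A\sim X_F$. By Proposition~\ref{prop:iso_in_C}, this gives $A\simeq X_F$ in $\mathscr{C}$.

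The first step is to show that $f$ is isomorphic, as an object over $X$, to the canonical embedding $i_F$ of Definition~\ref{def:subquotients_in_C}. Let $\phi=([n_A],[n_{X_F}])\colon A\to X_F$ be the isomorphism coming from $A\sim X_F$. Then I would compute $i_F\circ\phi$ with the composition formula~\eqref{eq:composition_C}. The intersection occurring in that formula is all of $[n_{X_F}]\cap I_{X_F}$, which spans $V_{X_F}$. Hence the composite is $([n_A],F)=f$, up to the choice of bijection $\gamma$ recorded by the equivalence. Since morphisms are only pairs of flats, that choice does not change the morphism. So every subobject is represented by some $i_F$ with $F\in\Irr(X)$, and the assignment $F\mapsto$ (class of $i_F$) is surjective onto the set of subobjects.

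The second step is the count. Every flat is a subset of $[n_X]$, so $|\Irr(X)|\le 2^{n_X}$, which is finite. Therefore $X$ has only finitely many subobjects, and in particular finitely many up to isomorphism of the source objects.

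Quotient objects are handled in the same way. By Proposition~\ref{prop:epi_mono_in_C}, an epimorphism out of $X$ has the form $(F,[n_C])$ with $F\in\Irr(X)$ and $C\sim X_F$, so it is isomorphic to the canonical retraction $r_F$. Alternatively, one can appeal directly to the symmetry $\Mor(X,Y)\leftrightarrow\Mor(Y,X)$. Either way, quotient objects are again indexed by a subset of $\Irr(X)$ and are finite in number. I do not expect a real obstacle here. The only point needing care is checking that $i_F\circ\phi=f$ in the composition formula, which reduces to the equality $\F(\V(I_{X_F}))=[n_{X_F}]$; this holds because $X_F$ is irreducible.
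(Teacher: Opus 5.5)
Your proof is correct and follows the paper's argument: by Proposition~\ref{prop:epi_mono_in_C}, subobjects and quotient objects of $X$ are indexed by irreducible flats of $X$, and there are only finitely many of these. The differences are cosmetic. The paper bounds the count by subsets of $I_X$, using that an irreducible flat is determined by $F\cap I_X$, whereas you bound it by subsets of $[n_X]$; the paper also leaves implicit your check that every monomorphism into $X$ is isomorphic over $X$ to the canonical embedding $i_F$.
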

 \begin{proof}
     By Proposition~\ref{prop:epi_mono_in_C}, two different subobjects of an object $X\in \CC$ are represented by $X_F$ and $X_{F'}$ for two different irreducible flats $F$ and $F'$ of $X$. Thus the subsets $F\cap I_X$ and $F'\cap I_X$ are different. Since $I_X$ only has finitely many subsets, the object $X$ has finitely many different subobjects. The same holds for quotient objects.
 \end{proof}

\begin{theorem}\label{thm:Hall_C}
    Fix a field $k$. The following defines a a connected graded commutative and cocommutative Hopf $k$-algebra $\mathcal{H}(\mathscr{C})=(H,m,u,\Delta,\epsilon)$.
    \begin{itemize}
        \item As a set, $H=\operatorname{span}_k(\operatorname{\pi}_0(\mathscr{C}))=\{\sum_{1\leq i\leq n}\lambda_i \cdot [X_i]\}_{n\in \mathbb{N}, \lambda_i \in k, X_i\in\CC}$, which is a $k$-vector space with a basis parameterized by
        the isomorphism classes of objects in $\mathscr{C}$;
        \item The multiplication of two 
        isomorphism
        classes $[X]\cdot [Y]$ is the isomorphism class of the direct sum of their arbitrary representatives $[X\oplus Y]$:
        \begin{align} \label{eq:mult_C}
            m([X],[Y]):=[X\oplus Y];
        \end{align}
        \item The unit $u$ sends $\lambda\in k$ to $\lambda\cdot[(W_{tri},\emptyset,\emptyset,\emptyset)]$;
        \item The comultiplication of an 
        isomorphism
        class $[X]$ is given by the following decompositions of an arbitrary representative of $X$:
        \begin{align} \label{eq:comult_C}
            \Delta ([X])= \sum_{\substack{A \overset{f}{\hookrightarrow} X \overset{g}{\twoheadrightarrow} C \\[4pt] \text{admissible,} \\ \text{$f$ canonical embedding,}
            \\\text{$g$ canonical retraction}
            }}
            [A]\otimes [C];
        \end{align}
        \item The counit $\epsilon$ sends $[X]$ to $1$ if $[X]=[(W_{tri},\emptyset,\emptyset,\emptyset)]$ and to $0$ otherwise. 
    \end{itemize}
    We call $\mathcal{H}(\mathscr{C})$ the \textbf{dual Hall Hopf algebra} of the category $\mathscr{C}$.
\end{theorem}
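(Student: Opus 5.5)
The most economical route is to identify $\mathcal{H}(\mathscr{C})$, term by term, with the Hopf algebra of Bergeron--Ceballos \cite[Theorem 3.6]{BC}. That algebra already has all the required properties, so the Hopf axioms can be transported rather than reproved. The plan has three steps: match the underlying vector spaces, match the structure maps, and then check that each structure map is well defined on isomorphism classes.

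\textbf{Step 1: bases.} By Proposition~\ref{prop:iso_in_C}, isomorphism classes in $\mathscr{C}$ are exactly the $\sim$-equivalence classes of irreducible quadruples. Hence $H$ has the same basis as the Bergeron--Ceballos vector space.

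\textbf{Step 2: multiplication, unit, counit.} Here one checks that $X\oplus Y$ is again irreducible. This holds because $V_{X\oplus Y}=V_X\oplus V_Y$, spanned by $R(X)\sqcup R(Y)$, since the root function of a concatenated word in $W_X\times W_Y$ restricts to $\rx_X$ and $\rx_Y$ on the two blocks. One also checks that $\oplus$ respects $\sim$. Then $m$ is literally the product of \cite{BC}, and the unit and counit agree with theirs.

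\textbf{Step 3: comultiplication.} Unwinding the definition, an admissible sequence with canonical maps $f=i_F$ and $g=r_{F'}$ is the same data as a pair $(F,F')$ with $F\in\Irr(X)$ and $F'$ an irreducible complementary flat, so that $\V(F)\oplus\V(F')=V_X$. The sum \eqref{eq:comult_C} therefore becomes $\sum_{(F,F^\perp)}[X_F]\otimes[X_{F^\perp}]$. This is exactly the coproduct of \cite{BC}, summed over decompositions of $V_X$ into a direct sum of two subspaces each spanned by root-configuration vectors. Lemma~\ref{lem:C_cofinitary}, or simply finiteness of subsets of $[n_X]$, guarantees the sum is finite.

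\textbf{Well-definedness.} One must show that $\Delta([X])$ does not depend on the representative. An equivalence $X\sim X'$ permutes positions via commutations and a reassignment of generators. This induces a bijection of root functions up to the linear isomorphism $V_X\cong V_{X'}$, so it carries flats to flats, irreducible flats to irreducible flats, and complementary pairs to complementary pairs. By Lemma~\ref{lem:flats_quadruples_BC}, the induced subquadruples are equivalent.

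Given this dictionary, coassociativity, compatibility $\Delta(xy)=\Delta(x)\Delta(y)$, commutativity, cocommutativity, the grading (by $\dim V_X$, or by $|I_X|$), and connectedness all follow from \cite[Theorem 3.6]{BC}. The antipode exists automatically for a connected graded bialgebra.

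For a more intrinsic argument, one could verify the key axioms directly. Coassociativity reduces to the bijection between triples of subspaces $V_1\oplus V_2\oplus V_3=V_X$ that are spanned by roots and yield irreducible flats. This uses that a flat of a flat is a flat of $X$, from the transitivity in Lemma~\ref{lem:flats_quadruples_BC}. Compatibility of $\Delta$ with $m$ follows from Lemma~\ref{lem:subquotient_of_direct_sum}: every admissible sequence through $B_1\oplus B_2$ splits blockwise.

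\textbf{Main obstacle.} The hardest step is the precise matching of the comultiplication with that of \cite{BC}. Two points need care. First, sequences must be counted by pairs of flats rather than by isomorphism classes of sequences; Examples~\ref{S3} and~\ref{S4} show the end terms are not determined by $f$ alone. Second, in the direct coassociativity check, one must confirm that irreducibility of $F$ and $F^\perp$ in $X$ matches irreducibility of the corresponding flats inside $X_{F\cup F^\perp}$.
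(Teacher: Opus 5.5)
Your proposal follows essentially the same route as the paper: identify the basis via Proposition~\ref{prop:iso_in_C}, match admissible sequences with canonical maps to the 2-flat-decompositions of \cite{BC} and $\oplus$ to their product, and, in the intrinsic version, get coassociativity from triples of root-spanned subspaces and compatibility from Lemma~\ref{lem:subquotient_of_direct_sum}. One small correction: the grading must be by rank, i.e.\ $\dim V_X$, not by $|I_X|$, because the coproduct is not homogeneous for $|I_X|$. In Example~\ref{S4}, for instance, $|I_A|+|I_C|=2+1\neq 5=|I_B|$.
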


The sum in the definition of the comultiplication is finite for each $X \in \mathscr{C}$ thanks to Lemma~\ref{lem:C_cofinitary}. We note that an equivalent (and arguably more conceptual) way to define the comultiplication is by the formula
\begin{align} 
            \Delta ([X])= \sum_{A, C \in \text{Iso}(\mathscr{C})}
            g_{AC}^X [A]\otimes [C],
        \end{align}

where $A, C, X$ are representatives of their isomorphism classes, chosen arbitrarily, and $g_{AC}^X$ is the number of admissible exact sequences $A {\hookrightarrow} X {\twoheadrightarrow} C$. This is because for each pair of isomorphic objects $U, V \in \mathscr{C}$, there is a unique isomorphism $U \overset\sim\to V$. 

\begin{proof}
   By Proposition~\ref{prop:iso_in_C}, the 
   isomorphism
   classes of objects in $\mathscr{C}$ are precisely the $k$-basis elements in the Hopf algebra of subword complexes in \cite{BC}. As a Hopf algebra, the dual Hopf Hall algebra $\mathcal{H}(\mathscr{C})$ is just the Hopf algebra of subword complexes defined in \cite[Theorem 3.6]{BC}. Indeed, admissible sequences in \eqref{eq:comult_C} bijectively correspond to 2-flat-decompositions in \cite{BC}, direct sums in \eqref{eq:mult_C} are defined exactly as in \cite[Definition 3.5]{BC}, and the same goes for the unit and counit. The algebra $\mathcal{H}(\mathscr{C})$ is thus defined by the same operations as the Hopf algebra in \cite[Theorem 3.6]{BC}, as so is the same Hopf algebra, only defined
   using the category $\mathscr{C}$.
\end{proof}

Instead of this indirect proof, we could have proved the (co)associativity of the (co)multiplication directly, similarly to the usual proofs for Hall (co)algebras in abelian, exact, or proto-exact categories, see e.g. \cite{PRO, DK, Hu}. Then an almost trivial statement would have been the isomorphism of thus defined dual Hall algebra to the Hopf algebra of \cite{BC}.
We will discuss proto-exact categories and their Hall algebras in more detail in Section~\ref{sec:categories_of_subobjects_and_subquivers}.
In this approach, the coassociativity of the comultiplication comes from the fact that given $A, B, C, X$, there are as many diagrams 

\begin{equation} \label{eq:diag_assoc_1}
        \begin{tikzcd}
        A  \arrow[r,hook, "i_A"]  & Y \arrow[r,two heads,"p_B"] \arrow[d,hook, "i_Y"] & B \\
        & X \arrow[d,two heads, "p_C"] & \\
        & C &
        \end{tikzcd}
\end{equation}

as diagrams 

\begin{equation} \label{eq:diag_assoc_2}
        \begin{tikzcd}
        && B \arrow[d,hook,"i_B"]\\
        A  \arrow[r,hook,"i_A"]  & X \arrow[r,two heads,"p_Z"]  &  Z \arrow[d,two heads,"p_C"] \\
        && 
         C 
        \end{tikzcd}.
\end{equation}

This is the case because each of these diagrams can be completed to a commutative diagram of the form 

\begin{equation} \label{eq:diag_assoc_3}
        \begin{tikzcd}
        A  \arrow[r,hook,"i_A"] \arrow[-,double line with arrow={-,-}]{d} & Y \arrow[r,two heads,"p_B"] \arrow[d,hook,"p_B"] & B \arrow[d,hook,"i_B"]\\
       A  \arrow[r,hook,"i_A"] & X \arrow[r,two heads,"p_Z"] \arrow[d,two heads,"p_C"] & Z \arrow[d,two heads,"p_C"]\\
        & C \arrow[-,double line with arrow={-,-}]{r} & C 
        \end{tikzcd}, 
\end{equation}

and such a complement defines the other diagram by restriction. Further, given a diagram \eqref{eq:diag_assoc_1} or \eqref{eq:diag_assoc_2}, the complement to a diagram of the form \eqref{eq:diag_assoc_3} is unique (as long all the morphisms are indeed asked to be canonical embeddings and retractions).
 Indeed, for a diagram \eqref{eq:diag_assoc_1}, $A, B, C$ are all subobjects of the object $X$ given by its irreducible flats. Then $Z$ is the irreducible flat of $X$ defined as $\F(\V(I_X \cap (F_B \cup F_C)))$. Similarly, for a diagram \eqref{eq:diag_assoc_2}, $Y$ must be the flat $\F(\V(I_X \cap (F_A \cup F_B)))$ of $X$.

The multiplication defined in~\eqref{eq:mult_C} is clearly associative, and it is compatible with the comultiplication defined in~\eqref{eq:comult_C} thanks to Lemma~\ref{lem:subquotient_of_direct_sum}. %

\begin{remark}
The (co)commutativity of the (co)multiplication of $\mathcal{H}(\mathscr{C})$ can be deduced from the fact that we have a natural equivalence $D: \mathscr{C}^{op} \to \mathscr{C}$ which preserves and reflects admissible sequence and is such that $DX \cong X, \forall X \in \mathscr{C}$. See \cite[Remark 1.8]{PRO} for a similar remark on Hall algebras of proto-abelian categories. 
\end{remark}

We note that despite the coassociativity of its Hall comultiplication, the category $\mathscr{C}$ is not proto-exact. In proto-exact categories, in (the analogue of) an admissible sequence $A \overset{f}{\hookrightarrow} X \overset{g}{\twoheadrightarrow} C$, $f$ is the kernel of $g$ and $g$ is the cokernel of $f$. This is not the case for $\mathscr{C}$, as discussed in Remark~\ref{rem:not_weak_(co)kernels}. 

In fact, it seems also not to be the case that the structure of admissible sequences in $\mathscr{C}$ is induced by a proto-exact structure on some $(\infty,1)$-category with the homotopy category equivalent to $\mathscr{C}$. There, it appears from the definition in \cite{DK} that in  (the analogue of) an admissible sequence $A \overset{f}{\hookrightarrow} X \overset{g}{\twoheadrightarrow} C$, $f$ is a weak kernel of $g$ and $g$ is a weak cokernel of $f$. In the additive setting, the induced structure on the homotopy category is extriangulated, as defined by Nakaoka--Palu \cite{NakaokaPalu1,NakaokaPalu2}, and admissible sequences are indeed weak kernel-cokernel pairs, see e.g. \cite[Section 5]{BBGH} for a detailed discussion. To the best of our knowledge, in the non-additive case, similar induced structures have not been axiomatized, but one might expect the same property to hold. Again by Remark~\ref{rem:not_weak_(co)kernels}, this is not the case for $\mathscr{C}$. We also note that pulling back an exact or extriangulated structure along a nice enough functor does often define an exact (resp. extriangulated) structure on the source category, see \cite[Appendix A]{CKP} and references therein, but this uses restrictions on the class of admissible sequences not satisfied for $\mathscr{C}$, even though we can define them in terms of the functor $\mbox{For}$ and the split exact structure on $\Vect_{\mathbb{R}}$. 



\subsection{Categorical description of flips}

Now we describe the corresponding operation of flip on the categorical level.

\begin{definition}\label{def:flip}
    Given a subword complex $\SC_W(Q,\pi)$, we say that a maximal face $I'$ is obtained from  another maximal face $I$ by a \textbf{flip} along a subset $L$ of $I$ if there exists a subset $L'$ of $I'$ such that $I\backslash L=I' \backslash L'$. We denote the flip from $I$ to $I'$ by \begin{align}
      I \overset{\sigma_{L,L'}}{\longrightarrow} I'. \nonumber
    \end{align}  
    The \textbf{degree} of the flip $\sigma_{L,L'}$ is the cardinality of the set $I\backslash L$. A flip of degree $n$ is also called an $n$-flip.
\end{definition}

The incidence graph $G_W(Q,\pi)$ of a subword complex $\SC_W(Q,\pi)$ is a connected graph, whose vertices are maximal faces and two vertices are connected by an edge if and only if there is a 1-flip between them. \par

Notice that the category $\mathscr{C}$ is graded by the rank of the Coxeter group. The indecomposable objects are just objects of degree 1. In fact, 1-flips can be described using subobjects of degree 1 as follows:

\begin{definition}\label{def:flippable}
    Given $X\in \CC$, an element in $I_X$ is called a \textbf{folded position} and an element in $[n_X]\backslash I_X$ is called a \textbf{traversing position}. An irreducible flat $F$ is \textbf{flippable} if $\operatorname{deg}X_F=1$ and there exists a traversing position in $F$.
\end{definition}

This definition is motivated by the folded gallery model for objects in $\mathscr{C}$ in \cite{Zijun}.
Notice that any flippable irreducible flat $F$ of $X\in \CC$ has a unique traversing position since $[n_X]\backslash I_X$ gives a reduced expression of $\pi_X$. We denote the unique traversing position of $F$ by $t_F$. In other words, we have $F=\{t_F\}\cup (I_X\cap F)$.

\begin{proposition} \label{prop:flip_in_C}
    Fix $X\in\CC$ and a flippable irreducible flat $F$ of $X$. Any folded position $i\in I_X \cap F$ gives a 1-flip of $I_X$ along $\{i\}$ of the form \begin{align}
      I_X \overset{\sigma_{\{i\},\{t_F\}}}{\longrightarrow} I(i), \nonumber
    \end{align}  
    which gives $\theta_{F,i}(X)=(W_X, Q_X,\pi _X, I(i))\in \CC$.
    Conversely, any 1-flip of $I_X$ is of this form. 
\end{proposition}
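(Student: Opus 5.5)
We reduce everything to the classical description of 1-flips in subword complexes via root functions due to Pilaud--Pocchiola and Ceballos--Labbé--Stump \cite{cls}. For a facet $I$ of $\SC_W(Q,\pi)$ and $i\in I$, there is a unique $j\notin I$ with $(I\setminus\{i\})\cup\{j\}$ a facet. This $j$ is characterised as the unique traversing position with $\rx_I(j)=\pm\rx_I(i)$. Moreover, the root function changes by $\rx_{I'}(k)=s_{\rx_I(i)}(\rx_I(k))$ for $k$ lying strictly between $i$ and $j$, and is unchanged elsewhere. We take this lemma as the combinatorial input. In the direction of the statement, we must translate it into the language of flippable irreducible flats.

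First, let $F$ be flippable and $i\in I_X\cap F$. Since $\deg X_F=1$, the space $\V(F)$ is a line. By Definition~\ref{def:flats} and the fact that $\V(F)$ contains a root, $F$ consists exactly of the positions $k$ whose root $\rx_X(k)$ equals $\pm\beta$, where $\beta$ spans $\V(F)$. Hence $\rx_X(t_F)=\pm\rx_X(i)$. Set $I(i):=(I_X\setminus\{i\})\cup\{t_F\}$. The complement $[n_X]\setminus I(i)$ is obtained from the reduced expression $[n_X]\setminus I_X$ of $\pi_X$ by deleting the letter at $t_F$ and inserting the letter at $i$. The standard exchange computation shows that the product is unchanged and the length is preserved. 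Indeed, deleting position $t_F$ multiplies $\pi_X$ by the reflection $s_{\rx_X(t_F)}$, and inserting position $i$ multiplies it back by $s_{\rx_X(i)}$, with both computed via the root function \eqref{eq:root function}; the two reflections coincide since the roots agree up to sign. Therefore $I(i)$ is a facet, i.e. $I_X\overset{\sigma_{\{i\},\{t_F\}}}{\longrightarrow}I(i)$ is a 1-flip.

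It remains to check that $\theta_{F,i}(X)$ is irreducible. The roots of $R(\theta_{F,i}(X))$ are obtained from those of $R(X)$ as follows: for positions between $i$ and $t_F$ we apply the reflection $s_{\beta}$, which is invertible on $V_X$, and the root at $i$ is replaced by $\pm\beta$. Hence
\[
\operatorname{span}R(\theta_{F,i}(X))=s_\beta\bigl(\operatorname{span}R(X)\bigr)=V_X,
\]
after checking that $\beta$ itself lies in both spans.

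Conversely, let $I_X\to I'$ be a 1-flip along $\{i\}$, and let $j$ be the new position. By the lemma, $\rx_X(j)=\pm\rx_X(i)$. Take $F=\F(\mathbb{R}\rx_X(i))$. This flat is irreducible, because its span is generated by $\rx_X(i)$ with $i\in I_X\cap F$. It has degree $1$ and contains the traversing position $j$, so it is flippable with $t_F=j$. Hence $I'=I(i)$, as required.

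The main obstacle is the careful verification that $\rx_X(t_F)=\pm\rx_X(i)$ indeed forces the exchange to preserve both the product and the reducedness, including the case where $t_F<i$. To handle this, I would reprove the cited root-function lemma of \cite{cls} directly from \eqref{eq:root function}, using the identity $s_{w(\alpha)}=ws_\alpha w^{-1}$.
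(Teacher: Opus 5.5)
Your proposal is correct and follows essentially the paper's route. The paper's proof only invokes the flip lemma for root functions (\cite[Lemma 2.6]{BC}, going back to \cite{cls}) and records how $\rx$ changes under the flip; you additionally spell out the facet check, the irreducibility of $\theta_{F,i}(X)$ and the converse, which the paper leaves implicit. Two small caveats on the lemma as you quote it, neither of which affects your argument. First, a flip at every $i\in I$ is only guaranteed when $\SC_W(Q,\pi)$ is a sphere; for example it fails for $(S_2,s,e,\{1\})$. Second, the root also changes, by a sign, at the endpoint $\max(i,j)$, not only strictly between $i$ and $j$. Your argument is unaffected because the existence of $j$ always comes from $t_F$ or from the given flip, and sign changes do not affect spans.
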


\begin{proof}
   We use \cite[Lemma 2.6]{BC}. If $i<t_F$, we have $\rx_X(t_F)=\rx_X(i)$. And the root function of $Y=\theta_{F,i}(X)$ can be deduced from the root function of $X$ as follows: \begin{align}
        \rx_Y(j)=\begin{cases}
            s_{X,i}(\rx_X(j)) & \text{if } i<j\leq t_F \\
            \rx_X(j) & \text{else,}
        \end{cases} \nonumber
    \end{align}
     where $s_{X,i}$ denotes the reflection that is orthogonal to the root $\rx_X(i)$. In particular, we have $\rx_Y(i')=-\rx_X(i)$. The case $i>t_F$ is analogous. \par
\end{proof}

We consider the set $\tilde{S}_{n}$ containing all elements of the following form:
\begin{align}
    S_{n,i}=(W_1,\underbrace{s\cdots s}_{n \text{ terms}}, s, [n]\backslash \{i\}), \nonumber
\end{align}
where $W_1$ is the Coxeter group of rank 1 generated by $s$, and $1\leq i \leq n$. Given $X\in\CC$ and a flippable irreducible flat $F$ of $X$ such that $n_{X_F}=n$, $X_F$ is isomorphic to $S_{n,i}$ for some $1\leq i\leq n$ via a unique isomorphism. The symmetric group $S_n$ acts on $\{S_{n,i}\}_{1\leq i\leq n}$ naturally. We denote the action of the permutation $(i,j)$ on this set by $\tau_{i,j}$. \par
Using the notation of Proposition~\ref{prop:flip_in_C}, the 1-flip $\sigma_{\{i\},\{t_F\}}$ at the folded position $i$ corresponds to the following diagram:\par
\[
        \begin{tikzcd}
        X  \arrow[d] & \theta_{F,i}(X) \arrow[d]\\
        S_{n,s} \arrow[r, "\tau_{s,t}"] & S_{n,t} 
        \end{tikzcd}
        \]
Here $S_{n,s}$ is isomorphic to $X_F$ and $S_{n,t}$ is isomorphic to $\theta_{F,i}(X)_{F}$ for some $1\leq s,t\leq n$. Here $\tau_{s,t}$ denotes the action of the transposition $(s,t)$ in the symmetric group on the set $\tilde{S}_n$. All flips of $X$ can be described using a diagram above.

\section{A proto-abelian subcategory} \label{sec:root-independednt_subcat}

The category $\mathscr{C}$ has interesting subcategories. In this section, we construct a proto-abelian subcategory from root independent subword complexes. 

\begin{definition}\label{def:root independent}
 An object $X\in \CC$ is $\textbf{root independent}$ if the cardinality of $I_X$ is equal to the rank of the Coxeter group $W_X$. Equivalently, $X$ is root independent if and only if the root configuration $R(X)=\{\rx_X(i)\}_{i\in I_X}$ is a basis of $V_X$. 
\end{definition}

\begin{proposition}\label{prop:subobjects_D}
    Given a root independent object $X\in \CC$, any subset $J\subseteq I_X$ gives an irreducible flat $f(J)$ of $X$ such that $J=I\cap f(J)$. Moreover, $X_{f(J)}$ is also root independent. Conversely, any subobject of $X$ is given in this way.
\end{proposition}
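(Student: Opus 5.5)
The natural candidate is $f(J):=\F(\V(J))$, the flat of the subspace spanned by the roots $\{\rx_X(j)\}_{j\in J}$. I would prove three things in turn: $I_X\cap f(J)=J$, $X_{f(J)}$ is irreducible and root independent, and every irreducible flat arises this way.

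For the first point, $J\subseteq f(J)$ is clear, since $\rx_X(j)\in\V(J)$ for $j\in J$. Conversely, take $i\in I_X\cap f(J)$. Then $\rx_X(i)\in\V(J)=\spanx\{\rx_X(j)\}_{j\in J}$. Root independence says $R(X)=\{\rx_X(i)\}_{i\in I_X}$ is a basis of $V_X$, so its elements are pairwise distinct and linearly independent. A basis vector indexed by $i\in I_X$ lies in the span of the basis vectors indexed by $J$ only if $i\in J$. Hence $I_X\cap f(J)=J$.

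For irreducibility, I would use the characterization stated after the definition of irreducible flats: a flat $F$ is irreducible iff $F=\F(\V(F\cap I_X))$. For $F=f(J)$ we get $\F(\V(f(J)\cap I_X))=\F(\V(J))=f(J)$, so $f(J)$ is irreducible. By Lemma~\ref{lem:flats_quadruples_BC}, $X_{f(J)}$ is a quadruple with $V_{X_{f(J)}}=\V(J)$ and the same roots under $\bx_{f(J)}$.

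The main point needing care is identifying the maximal face of $X_{f(J)}$ with $\bx_{f(J)}(J)$, i.e.\ that $I_{X_{f(J)}}=\bx_{f(J)}(I_X\cap f(J))$. This should follow from the Bergeron--Ceballos construction behind Lemma~\ref{lem:flats_quadruples_BC}: the induced face is the restriction of $I_X$ to the flat. If so, the root configuration of $X_{f(J)}$ is $\{\rx_X(j)\}_{j\in J}$. This set is linearly independent, spans $\V(J)=V_{X_{f(J)}}$, and so is a basis. Since $X_{f(J)}$ is irreducible, its rank equals $\dim V_{X_{f(J)}}$, so the number of simple roots $\beta_{f(J)}$ equals $|J|=|I_{X_{f(J)}}|$. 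Hence $X_{f(J)}$ is root independent.

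For the converse, Definition~\ref{def:subquotients_in_C} together with Proposition~\ref{prop:epi_mono_in_C} shows that every subobject is $X_F$ for some irreducible flat $F$. Put $J=F\cap I_X$. Irreducibility gives $F=\F(\V(J))=f(J)$, which completes the argument.
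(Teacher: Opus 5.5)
Your proposal is correct and follows the same route as the paper's proof. You take $f(J)=\F(\V(J))$, use linear independence of $R(X)$ to get $I_X\cap f(J)=J$, use invariance of the root function (Lemma~\ref{lem:flats_quadruples_BC}) to see that $\{\rx_X(j)\}_{j\in J}$ is a basis of $V_{X_{f(J)}}$, and write an irreducible flat as $F=f(F\cap I_X)$ for the converse. Restricting the converse to irreducible flats is in fact more accurate than the paper's phrase ``any flat'', since a nonempty flat containing no folded position is never of the form $f(J)$.
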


\begin{proof}
    The roots in $\{\rx_X(i)\}_{i\in J}$ are linearly independent. Taking $V(J)$ the subspace of $V_X$ generated by $\{\rx_X(i)\}_{i\in J}$, we have $J=I_X\cap \operatorname{F}(V(J))$. Since $\{\rx_X(i)\}_{i\in J}$ is a basis of $V(J)$ and the root function is invariant by Lemma~\ref{lem:flats_quadruples_BC}, the object $X_{f(J)}$ is also root independent.\par
    Conversely, given any flat $F$ of a root independent object $X\in \CC$, we have $F=f(F\cap I_X)$ immediately.
\end{proof}

 Since the root function is invariant under taking subobjects, we have the following corollary:

\begin{corollary}\label{cor:subobject_D}
    Any subobject of a root independent object is still root independent.
\end{corollary}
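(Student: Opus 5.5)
The plan is to reduce the corollary to Proposition~\ref{prop:subobjects_D} together with the invariance of root functions under passing to flats (Lemma~\ref{lem:flats_quadruples_BC}).

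\emph{Step 1: identify an arbitrary subobject with a flat.} Let $X\in\CC$ be root independent and let $Y\hookrightarrow X$ be a subobject. By Proposition~\ref{prop:epi_mono_in_C} and Definition~\ref{def:subquotients_in_C}, $Y$ is isomorphic to $X_F$ for some irreducible flat $F$ of $X$. By Proposition~\ref{prop:iso_in_C}, isomorphism in $\mathscr{C}$ is the equivalence $\sim$ of Definition~\ref{def:equivalence}. That equivalence relabels the generators and transports the maximal face position-by-position, so it preserves both $|I|$ and the rank of the Coxeter group. Hence root independence is an isomorphism invariant, and it suffices to show that $X_F$ is root independent.

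\emph{Step 2: show $X_F$ is root independent.} By the converse part of Proposition~\ref{prop:subobjects_D}, we have $F=f(F\cap I_X)$, and the same proposition asserts that $X_{f(J)}$ is root independent for every $J\subseteq I_X$. If one prefers to verify this directly, the argument runs as follows.
\begin{itemize}
\item The maximal face $I_{X_F}$ corresponds to $\bx_F(F\cap I_X)$.
\item Lemma~\ref{lem:flats_quadruples_BC} gives $\rx_{X_F}(\bx_F(i))=\rx_X(i)$, so $R(X_F)=\{\rx_X(i)\}_{i\in F\cap I_X}$.
\item This set is a subset of the basis $R(X)$ of $V_X$, hence it is linearly independent.
\item Irreducibility of $F$ says this set spans $\V(F)=V_{X_F}$, so it is a basis of $V_{X_F}$.
\end{itemize}
By the equivalent characterization in Definition~\ref{def:root independent}, $X_F$ is therefore root independent.

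The only point needing care is the identification of $I_{X_F}$ with $\bx_F(F\cap I_X)$, which is implicit in the construction of $X_F$ in \cite{BC}. I would cite Lemma~\ref{lem:flats_quadruples_BC} and the proof of Proposition~\ref{prop:subobjects_D} for this identification rather than reprove it.
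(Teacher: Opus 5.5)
Your proposal is correct and follows the paper's route. The paper reads the corollary off Proposition~\ref{prop:subobjects_D}, whose proof is exactly your direct check: $\{\rx_X(i)\}_{i\in F\cap I_X}$ is a linearly independent subfamily of the basis $R(X)$, it spans $\V(F)=V_{X_F}$, and the root function is preserved by Lemma~\ref{lem:flats_quadruples_BC}. Your Step~1 on isomorphism invariance just makes explicit the paper's convention that ``subobject'' means a chosen representative $X_F$.
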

Here we abuse the notation of the subobject slightly. By a subobject, we mean a chosen representative of the corresponding equivalence class. 
 Hence, the subcategory of root independent objects in $\CC$ is closed under taking subobjects. In fact, given an admissible sequence $A \overset{f}{\hookrightarrow} B \overset{h}{\twoheadrightarrow} C$ in $\mathscr{C}$, $A$ and $C$ are root independent if $B$ is root independent.

\begin{definition}\label{def:category_D}
    We denote the full subcategory of $\mathscr{C}$ of root independent objects by $\mathscr{D}$, which is called the \textbf{root independent subword complex category}. The restriction of the dual Hall Hopf algebra $\mathcal{H}(\mathscr{C})$ on $\mathscr{D}$ gives the dual Hall Hopf algebra $\mathcal{H}(\mathscr{D})$ of $\mathscr{D}$.
\end{definition}

Notice that in $\mathscr{C}$, an admissible sequence $A \overset{f}{\hookrightarrow} B \overset{h}{\twoheadrightarrow} C$ is not uniquely determined by $A$ and $B$, since a flat can have different complementaries. Thus we can not define quotients in $\mathscr{C}$. However, in the subcategory $\mathscr{D}$, we can define quotients. Moreover, we have the following complete description of admissible sequences in $\mathscr{D}$:

\begin{proposition} \label{prop:subquotients_in_D}
    The set of subobjects of  $X\in \mathscr{D}$ can be represented by $\{X_{f(J)}\}_{J\subseteq I_X}$. The sequence $X_{f(J)} \hookrightarrow X \twoheadrightarrow C$ is admissible if and only if $C$ is isomorphic to $X_{f(I\backslash J)}$.
\end{proposition}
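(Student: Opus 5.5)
The first assertion is a restatement of Proposition~\ref{prop:subobjects_D} combined with Definition~\ref{def:subquotients_in_C}. Subobjects of $X$ are represented by the canonical embeddings $X_F \hookrightarrow X$ with $F\in\Irr(X)$. For root independent $X$, every irreducible flat satisfies $F=f(F\cap I_X)$, and every $J\subseteq I_X$ yields $f(J)$. Thus $J\mapsto X_{f(J)}$ parametrizes all subobjects. Distinct $J$ give distinct flats, since $J=I_X\cap f(J)$.

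For the second assertion, I use the definition of admissible sequences. The sequence $X_{f(J)}\hookrightarrow X\twoheadrightarrow C$ is admissible if and only if some complementary flat $F^{\perp}$ of $f(J)$ satisfies $C\sim X_{F^\perp}$. By Proposition~\ref{prop:iso_in_C}, this is the same as $C\cong X_{F^\perp}$ in $\mathscr{C}$. The claim therefore reduces to a single statement: in a root independent $X$, the unique complementary flat of $f(J)$ is $f(I_X\setminus J)$.

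\emph{Existence.} The roots $\{\rx_X(i)\}_{i\in I_X}$ form a basis of $V_X$. Hence $\V(J)\oplus\V(I_X\setminus J)=V_X$. Since $\V(f(J))=\V(J)$ and $\V(f(I_X\setminus J))=\V(I_X\setminus J)$, and $f(I_X\setminus J)$ is irreducible by Proposition~\ref{prop:subobjects_D}, the flat $f(I_X\setminus J)$ is a complementary flat of $f(J)$. This gives the ``if'' direction.

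\emph{Uniqueness.} Let $F'$ be any irreducible complementary flat. By Proposition~\ref{prop:subobjects_D}, $F'=f(J')$ with $J'=F'\cap I_X$, so $\V(F')=\spanx\{\rx_X(i)\}_{i\in J'}$. Complementarity gives $\V(J)\cap\V(J')=0$ and $\dim\V(J)+\dim\V(J')=\dim V_X=|I_X|$. Because the roots indexed by $I_X$ are linearly independent, $\V(J)\cap\V(J')=\V(J\cap J')$. This forces $J\cap J'=\emptyset$, and the dimension count then gives $|J'|=|I_X|-|J|$, so $J'=I_X\setminus J$. Hence $F'=f(I_X\setminus J)$. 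Then $C\cong X_{F'}=X_{f(I_X\setminus J)}$, which proves the ``only if'' direction.

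The only delicate point is the identity $\V(J)\cap\V(J')=\V(J\cap J')$. This holds for subsets of a basis, and it is exactly where root independence enters; Examples~\ref{S3} and~\ref{S4} show the conclusion fails without it. One should also record that any $C$ arising in an admissible sequence is root independent, by Corollary~\ref{cor:subobject_D}, so the sequence lies in $\mathscr{D}$ as a full subcategory.
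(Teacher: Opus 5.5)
Your proposal is correct and follows the same route as the paper: both reduce admissibility to the claim that $f(I_X\setminus J)$ is the unique complementary flat of $f(J)$, with the first assertion read off from the description of irreducible flats of root independent objects. The paper justifies uniqueness only by the remark that a complementary flat ``must contain all elements in $I\backslash J$''. Your argument, using $\V(J)\cap\V(J')=\V(J\cap J')$ for subsets of a basis together with the dimension count, is a more complete version of that same step.
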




\begin{proof}
    The first part is given by Corollary~\ref{cor:subobject_D}. Given an admissible sequence $X_{f(J)} \hookrightarrow X \twoheadrightarrow C$ in $\mathscr{D}$, the unique complementary of the flat $f(J)$ is $f(I\backslash J)$ since it must contain all elements in $I\backslash J$. Thus the sequence $X_{f(J)} \hookrightarrow X \twoheadrightarrow C$ is admissible if and only if $C$ is isomorphic to $X_{f(I\backslash J)}$.
\end{proof}

The concept of proto-abelian category is a generalization of abelian category, which allows us to define admissible extensions without the additive property. For such a category, we can generalize the definition of the Hall algebra for abelian categories. Proto-abelian categories also have interesting connections with higher Segal conditions. See \cite{DK}, \cite{PRO} for more details. There is an earlier slightly different notion of proto-abelian categories, see \cite[Remark 1.13]{Mozgovoy} and references therein. We use the notion in \cite[Definition 2.4.2]{DK}. 

\begin{definition}[Proto-abelian category]\label{def:proto-abelian category}
A category $\mathcal{E}$ is called \textbf{proto-abelian} if the following conditions hold:
\begin{enumerate}
    \item The category $\mathcal{E}$ is pointed.
        \item Every diagram in $\mathcal{E}$ of the form
        \[
        \begin{tikzcd}
        A  \arrow[r,hook] \arrow[d,two heads] & B \\
        C
        \end{tikzcd}
        \]
        can be completed to a pushout square of the form
        \[
        \begin{tikzcd}
        A  \arrow[r, hook] \arrow[d, two heads] & B \arrow[d, two heads]\\
        C \arrow[r, hook] & D 
        \end{tikzcd}
        \]
     \item Every diagram in $\mathcal{E}$ of the form
        \[
        \begin{tikzcd}
        & B \arrow[d, two heads] \\
        C \arrow[r, hook] & D
        \end{tikzcd}
        \]
        can be completed to a pullback square of the form
        \[
        \begin{tikzcd}
        A \arrow[r, hook] \arrow[d, two heads] & B \arrow[d, two heads] \\
        C \arrow[r, hook] & D
        \end{tikzcd}
        \]
    \item A commutative square in $\mathcal{E}$ of the form
   \[
        \begin{tikzcd}
        A \arrow[r, hook] \arrow[d, two heads] & B \arrow[d, two heads] \\
        C \arrow[r, hook] & D
        \end{tikzcd}
        \]
    is a pushout square if and only if it is a pullback square.
    We call such a square \textbf{biCartesian}.
\end{enumerate}
\end{definition}
\begin{remark}\label{remark:extension}
     In a proto-abelian category $\mathcal{E}$, a biCartesian diagram 
     \[
        \begin{tikzcd}
        A \arrow[r, hook] \arrow[d, two heads] & B \arrow[d, two heads] \\
        \mathbf{0} \arrow[r, hook] & C
        \end{tikzcd}
        \]
      is called an extension of $C$ by $A$, or a short exact sequence. \par
      
       Two extensions 
\[
A \hookrightarrow B \twoheadrightarrow A' 
\quad \text{and} \quad 
A \hookrightarrow C \twoheadrightarrow A'
\]
of $A'$ by $A$ are equivalent if there exists a commutative diagram
\[
\begin{tikzcd}
A \arrow[r, hook] \arrow[d, "id"] & B \arrow[r, two heads] \arrow[d, "\cong"] & A' \arrow[d, "id"] \\
A \arrow[r, hook, "id"'] & C \arrow[r, two heads] & A'
\end{tikzcd}
\]
We denote by $\operatorname{Ext}_{\mathcal{E}}(A',A)$ the set of equivalence classes of extensions of $A'$ by $A$. The middle terms of equivalent extensions are isomorphic by definition, and we further denote by  $\operatorname{Ext}_{\mathcal{E}}(A',A)_B$ the set of equivalence classes inside $\operatorname{Ext}_{\mathcal{E}}(A',A)$ with middle term isomorphic to $B$.
\end{remark}

We have the following nice property of $\mathscr{D}$:
\begin{theorem} \label{thm:proto-abelian_D}
    The category $\mathscr{D}$ is a proto-abelian category.
\end{theorem}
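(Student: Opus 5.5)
We verify the four axioms of Definition~\ref{def:proto-abelian category} for $\mathscr{D}$, using the combinatorial model from Proposition~\ref{prop:subobjects_D} and Proposition~\ref{prop:subquotients_in_D}. An object $X\in\mathscr{D}$ has subobjects indexed by subsets $J\subseteq I_X$, via the flats $f(J)$. By Proposition~\ref{prop:epi_mono_in_C}, a monomorphism $A\hookrightarrow B$ is, up to the unique isomorphism, the canonical embedding $i_{f(J)}$, and an epimorphism $B\twoheadrightarrow C$ is the canonical retraction $r_{f(K)}$. So every morphism $(F_1,F_2)$ in $\mathscr{D}$ factors as an epimorphism followed by a monomorphism. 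The composition formula \eqref{eq:composition_C} becomes intersection of subsets of the folded positions: $i_{f(J)}$ followed by $r_{f(K)}$ is the morphism $(f_A(J'),f_C(K'))$ corresponding to $J\cap K$. The one nontrivial input is that in a root-independent object, $\V(f(J))\cap\V(f(K))=\V(f(J\cap K))$. This holds because $R(X)$ is a basis, so spans of subsets of a basis intersect in the span of the intersection. Consequently $\F(\V(\cdot))$ commutes with intersection of folded index sets.

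Axiom (1) is immediate: $\mathbf{0}$ is a zero object since $\Mor(X,\mathbf{0})=\Mor(\mathbf{0},X)=\{0\}$. For (2), take $A=B_{f(J)}\hookrightarrow B$ and $A\twoheadrightarrow C=A_{f(K)}$, where we identify $I_A$ with $J$, so $K\subseteq J$. Set $D:=B_{f(K\cup(I_B\setminus J))}$, with $B\twoheadrightarrow D$ the canonical retraction and $C\hookrightarrow D$ the canonical embedding of the flat indexed by $K$. This $D$ is root independent by Corollary~\ref{cor:subobject_D}. For (3), given $C\hookrightarrow D\twoheadleftarrow B$, dually set $A:=B_{f(L)}$, where $L$ is the set of indices of $I_B$ that map into $I_C$ under the retraction. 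In both cases, commutativity follows from the intersection formula above, applied inside $B$ via Lemma~\ref{lem:flats_quadruples_BC} (root functions are preserved under passing to flats).

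The main work is (4) and the universal properties. I would first show that a commutative square $A\hookrightarrow B$, $A\twoheadrightarrow C$, $B\twoheadrightarrow D$, $C\hookrightarrow D$ in $\mathscr{D}$ is entirely encoded by subsets of $I_B$. Write $A\leftrightarrow J$ and $D\leftrightarrow M$. Then $C$ corresponds to a subset of $J\cap M$, and commutativity forces $C\leftrightarrow J\cap M$. Both the pushout and the pullback conditions should then reduce to the single condition $J\cup M=I_B$, since $D$ must be the minimal quotient and $A$ the maximal subobject. To verify the universal property, consider a test cocone $B\to E$, $C\to E$. I would factor each map as epi-then-mono, which translates it into a subset of $I_B$ together with an identification of subquivers. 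Compatibility on $A$ then forces the subset attached to $B\to E$ to lie inside the one determined by $D$, up to the indices in $I_B\setminus(J\cup M)$. Uniqueness of the induced $D\to E$ follows from faithfulness of $\mbox{For}$ together with root independence. The expected obstacle is this last step: checking that an arbitrary morphism $D\to E$ is determined by its precomposites. I would handle it through $\mbox{For}$. In $\mathscr{D}$ a morphism is determined by the linear map it induces, and on the basis $R(D)$ this map is the inclusion of $M$ patched with the inclusion of $J\cap M$. From this linear description both uniqueness and the equivalence of the pushout and pullback conditions follow.
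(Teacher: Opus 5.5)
Your proposal is correct and follows essentially the paper's route: the same explicit pushout $D=B_{f(K\cup(I_B\setminus J))}$, whose universal property comes from compatibility on $A$ forcing the folded index set of any test map $B\to E$ into $K\cup(I_B\setminus J)$; the pullback obtained dually; and axiom (4) reduced to subset combinatorics in $I_B$, which your criterion $J\cup M=I_B$ states symmetrically. In (3), your $L$ must be read as the full preimage $H\cup(I_B\setminus M)$ (where $D\leftrightarrow M$ and $C\leftrightarrow H\subseteq M$), including the indices killed by the retraction; and the appeal to $\mbox{For}$ for uniqueness is unnecessary, since $B\twoheadrightarrow D$ is an epimorphism by Proposition~\ref{prop:epi_mono_in_C}.
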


\begin{proof}
    The category $\mathscr{D}$ is pointed with $\mathbf{0}_{\mathscr{D}}=(W_{triv},\emptyset,\emptyset,\emptyset)$.\par
    We fix a diagram in $\mathscr{D}$ of the form
     \[
        \begin{tikzcd}
        A  \arrow[r,hook,"f"] \arrow[d,two heads,"g"] & B \\
        C
        \end{tikzcd}.
        \]
        
    We know that $A$ is a subobject of $B$ and $C$ is a subobject of $A$. Notice that any two isomorphic objects in $\mathscr{D}$ are isomorphic via a unique isomorphism. We can suppose that $A=B_{f(J)}$ and $C=B_{f(H)}$ for $H\subseteq J \subseteq I_B$ by Proposition~\ref{prop:subquotients_in_D}. For convenience, we identify $[n_A]$ and $[n_C]$ with subsets $f(J)$ and $f(H)$ of $[n_B]$ by the bijection $\operatorname{b}_{f(J)}$ and $\operatorname{b}_{f(H)}$ respectively. We claim that $D=B_{f((I_B\backslash J)\cup H)}$ gives a pushout square 
     \[
        \begin{tikzcd}
        A \arrow[r, hook, "f"] \arrow[d, two heads, "g"] & B \arrow[d, two heads, "k"] \\
        C \arrow[r, hook, "h"] & D
        \end{tikzcd}
        \]
        where $h$ is given by the inclusion $H\subseteq (I_B\backslash J)\cup H$ and $k$ is given by the inclusion $(I_B\backslash J)\cup H\subseteq I_B$. We identify $[n_D]$ with the subset $f(H\cup (I_B\backslash J))$ of $[n_B]$ via the bijection $\operatorname{b}_{f((I_B\backslash J)\cup H)}$. \par
        
    \textbf{Proof of the claim:}\par
    
    By the identification above, we have \begin{align}
        k\circ h=(f(H),f(H))=h\circ g. \nonumber
    \end{align}
    
    Suppose that we have a commutative diagram 
      \[    
        \begin{tikzcd}
        A \arrow[r, hook, "f"] \arrow[d, two heads, "g"] & B \arrow[d, two heads, "k"] \arrow[ddr, "k'"] \\
        C \arrow[drr, "h'"] \arrow[r, hook, "h"] & D \\
        & & X
        \end{tikzcd}
        \text{,}             
     \]
    where $k'=(R,R')\in \Mor(B,X)$ and $h'=(T,T')\in \Mor(C,X)$. The commutativity of the diagram implies the equation $k' \circ f=h' \circ g$. Notice that we have $I_B\cap f(H)=H$ and $I_B\cap f(J)=J$. By the definition of the composition of morphisms, we have  \begin{align}
        &(f(J)\cap R,\bar{R}')=(R,R')\circ (f(J),f(J))\nonumber \\ 
        &=k'\circ f\nonumber \\
        &=h'\circ g\nonumber \\
        &=(T,T')\circ (f(H),f(H))\nonumber \\
        &=(T,T'), \label{eq:composition in proto-abelian}
    \end{align}   
    where $\bar{R}'$ is the flat of $X$ contained in $R'$ induced by $J\cap R$. \par
    
    We have $R\cap f(J)=T\cap f(H)$. Thus we have $R\cap I_B\subset H\cup (I_B\backslash J)$, which implies the inclusion $R\in f(H\cup (I_B\backslash J))$. Thus we have $\psi=(R,R')\in \Mor(D,X)$. We have the following diagram:  \[
        \begin{tikzcd}
        A \arrow[r, hook, "f"] \arrow[d, two heads, "g"] & B \arrow[d, two heads, "k"] \arrow[ddr, "k'"] \\
        C \arrow[drr, "h'"] \arrow[r, hook, "h"] & D \arrow[dr, dashed, "\psi"] \\
        & & X
        \end{tikzcd}
        \text{.}
        \] 
        We have $\psi \circ k=(R,R')=k'$ by the inclusion $R\subset f(H\cup (I_B\backslash J))$. Moreover, we have $\psi \circ h=(R\cap f(J),R')=(T, T')=h'$. The first equality is given by the equation $R\cap f(J)=R\cap f(H)$. The second equality is given by Equation~\eqref{eq:composition in proto-abelian}. Thus the morphism $\psi$ makes the diagram above commutes.\par
        The morphism $\psi$ is unique. For any morphism $\tilde{\psi}=(\tilde{R},\tilde{R'})\in \Mor(D,X)$ such that $\tilde{\psi}\circ k=k'$, we have $\tilde{R}=\tilde{R}\cap f(H\cup (I_B\backslash J))=R$. The inclusion $\tilde{R}\subset f(H\cup (I_B\backslash J))$ implies $\tilde{\psi}\circ k=(\tilde{R},\tilde{R'})=(R,R')$. Thus we have $\tilde{R'}=R'$ and $\tilde{\psi}=\psi$. As a consequence, $(D,\psi)$ is the pushout that we desire. \par
        By the symmetry between the morphism sets $\Mor(X,Y)$ and $\Mor(Y,X)$ for any two objects $X,Y\in \operatorname{Ob}(\mathscr{D})$, we can obtain the existence of the pullback square by using the same construction for the opposite category $\mathscr{D}^{op}$. Moreover, if the square 
        \[
        \begin{tikzcd}
        A \arrow[r, hook] \arrow[d, two heads] & B \arrow[d, two heads] \\
        C \arrow[r, hook] & D
        \end{tikzcd}
        \]
        is a pullback square and we identify $D$ with $B_{f(J')}$ and $C$ with $B_{f(H)}$ for subset $H\subset J' \subset I_B$, the pullback $A$ can be given given by
        \begin{align}
            A=B_{f(H\cup (I_B\backslash J'))}. \label{eq:A in D}
        \end{align}
        For the biCartesian square, we consider the following commutative square in $\mathscr{D}$:
        \[
        \begin{tikzcd}
        A \arrow[r, hook] \arrow[d, two heads] & B \arrow[d, two heads] \\
        C \arrow[r, hook] & D
        \end{tikzcd}
        \]
        Suppose that this is a pushout square, by the discussion above, we can identify $A$ with $B_{f(J)}$ and $C$ with $B_{f(H)}$ for subset $H\subset J \subset I_B$. By the discussion above, we know that the pushout $D$ is given by $B_{f(H\cup (I_B\backslash J))}$. Using Equation~\eqref{eq:A in D} and the equality \begin{align}
            J=H\cup (I_B\backslash (I_B\backslash J)), \nonumber
        \end{align}
        the square is also a pullback square. The same argument for $\mathscr{D}^{op}$ implies that $D$ is a pushout if $A$ is a pullback. \par
        In conclusion, the category $\mathscr{D}$ is a proto-abelian category.
\end{proof}

\begin{remark}\label{remark: exact}
    In $\mathscr{D}$, a sequence \[ \begin{tikzcd}
        A \arrow[r, hook, "i"] & B \arrow[r, two heads, "j"] & C 
        \end{tikzcd}
        \] is proto-exact if and only if $i$ is a kernel of $j$ and $j$ is a cokernel of $i$.
\end{remark}
\begin{definition}\label{def:finitary proto-exact}
    A proto-abelian $\mathcal{L}$ is \textbf{finitary} if the morphism set $\operatorname{Mor}(X,Y)$ and the extension set $\operatorname{Ext}_{\mathcal{L}}(X,Y)$ are finitary for any two objects $X,Y\in \operatorname{Ob}(\mathcal{L})$. 
\end{definition}

By Proposition~\ref{prop:morphisms_in_C_finite}, the root independent subword complex category $\mathscr{D}$ has finite morphism sets. However, the extension set $\operatorname{Ext}_{\mathscr{D}}(X,Y)$ is not finite in general. To get a finitary proto-abelian category, we need a finiteness condition.

\begin{definition}
    The full subcategory of $\mathscr{D}$ consisting of objects 
    $X = (W_X, Q_X, \pi_X, I_X)$ with $W_X$ finite
    is denoted by $\mathscr{D}_{fin}$.
\end{definition}

\begin{theorem}
    The category $\mathscr{D}_{fin}$ is a finitary proto-abelian category.
\end{theorem}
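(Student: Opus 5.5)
The argument has three parts: proto-abelianness, finiteness of morphism sets, and finiteness of extension sets.

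\emph{Proto-abelianness.} Since $\mathscr{D}_{fin}$ is a full subcategory of $\mathscr{D}$, I would show it inherits the proto-abelian structure of Theorem~\ref{thm:proto-abelian_D}. The pointedness, pushouts and pullbacks constructed in that proof only involve subobjects $B_{f(K)}$ of a given object $B$ for subsets $K\subseteq I_B$. For $B\in\mathscr{D}_{fin}$, each such $B_{f(K)}$ has Coxeter group $W_{B_{f(K)}}$. By Lemma~\ref{lem:flats_quadruples_BC}, its root system is $\Phi_B\cap \V(f(K))$, a subset of the finite set $\Phi_B$. Hence its Weyl/Coxeter group is finite, and $B_{f(K)}\in\mathscr{D}_{fin}$. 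So all pushout and pullback squares built there stay inside $\mathscr{D}_{fin}$. The universal properties still hold in the full subcategory, since they only need testing against fewer objects, and the biCartesian condition carries over verbatim.

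\emph{Finite morphism sets.} This is immediate from Proposition~\ref{prop:morphisms_in_C_finite}.

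\emph{Finite extension sets.} This is the main point. Fix $A,C\in\mathscr{D}_{fin}$. An extension of $C$ by $A$ has middle term $B$ with $|I_B|=|I_A|+|I_C|$, since root independence and Proposition~\ref{prop:subquotients_in_D} force $I_B=J\sqcup(I_B\setminus J)$. Its rank is $\operatorname{rank}W_B=|I_A|+|I_C|$, so it is bounded. I would then bound the length $n_B$. The positions of $[n_B]\setminus I_B$ form a reduced expression of $\pi_B$, so $n_B-|I_B|=\ell(\pi_B)\le \ell(w_0(W_B))$. Finite Coxeter groups of bounded rank are finitely many up to isomorphism, so the longest element lengths are uniformly bounded. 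Hence $n_B$ is bounded, and there are only finitely many quadruples $(W_B,Q_B,\pi_B,I_B)$ up to equivalence.

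For each such $B$, the extensions with middle term $B$ are determined by a choice of $J\subseteq I_B$ together with isomorphisms $A\cong B_{f(J)}$ and $C\cong B_{f(I_B\setminus J)}$, which are unique when they exist. Therefore $\operatorname{Ext}(C,A)_B$ is finite, and so $\operatorname{Ext}_{\mathscr{D}_{fin}}(C,A)$ is a finite union of finite sets.

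The step needing care is the uniform bound on $\ell(\pi_B)$. Here I would rely on the classification of finite Coxeter groups to bound $\ell(w_0)$ in terms of the rank, for instance by the number of positive roots. Without finiteness of $W$ this bound fails, which is exactly why $\mathscr{D}$ itself is not finitary.
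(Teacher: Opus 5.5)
Your proposal is correct and follows essentially the same route as the paper: $\mathscr{D}_{fin}$ inherits the proto-abelian structure of $\mathscr{D}$ because the relevant subobjects have finite Coxeter groups, and finiteness of $\operatorname{Ext}$ follows from bounding the rank by $|I_A|+|I_C|$, bounding $n_B$ by the rank plus $\ell(w_0)$, and counting the finitely many resulting quadruples. Your closure argument via the finite root subsystem $\Phi_B\cap \V(f(K))$ is a bit more explicit than the paper's appeal to parabolic subgroups, and you spell out why each middle term contributes only finitely many extension classes, which the paper leaves implicit.
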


\begin{proof}
   Any parabolic subgroup of a finite Coxeter group is still a finite Coxeter group. Thus the category $\mathscr{D}_{fin}$ is proto-abelian. We only need to prove that the extension set is finite.  Given $X,Y\in \operatorname{Ob}(\mathscr{D}_{fin})$, we want to prove the finiteness of the extension set $\operatorname{Ext}_{\mathscr{D}_{fin}}(X,Y)$. Given an admissible sequence \begin{align}
       X \overset{f}{\hookrightarrow} Z \overset{h}{\twoheadrightarrow} Y ,\nonumber
    \end{align}
    the maximal faces $I_X$ and $I_Y$ of $X$ and $Y$ gives a partition of $I_Z$. The rank $n=\operatorname{deg}(Z)$ of the finite Coxeter group $W_Z$ is equal to the sum of he rank of the finite Coxeter groups $W_X$ and $W_Y$. We only have finitely many isomorphism classes of finite Coxeter groups of rank $n$. \par
    For a fixed Coxeter group $W_Z$, the length of the word $Q_x$ is equal to the sum of $n$ and the length of a reduced expression of the group element $\pi_Z$, which is bounded by $n+l(W_Z)$, where $l(W_Z)$ is the length of the longest element in $W_Z$. In this case, $Q_Z$ has $n^{n+l(W_Z)}$ possible choices. \par
    Fixed $W_Z$ and $Q_Z$, the possible choices of $I_Z$ and $\pi_Z$ is finite since $W_Z$ is a finite group. \par
    In conclusion, the extension set $\operatorname{Ext}_{\mathscr{D}_{fin}}(X,Y)$ is finite.
\end{proof}

Using the 2-Segal condition of $\mathscr{D}_{fin}$, we can define its functorial Hall algebra by \cite[Proposition 2.19]{PRO}. It is is dual to a sub-Hopf algebra of $\mathcal{H}(\mathcal{\mathscr{C}})$. 

\section{root configuration quiver and proto-exact structure on the category \texorpdfstring{$\mathcal{S}_X$}{SX}}
\label{sec:categories_of_subobjects_and_subquivers}

We define the subquiver category $\mathcal{S}_X$ for any object $X$ in $\mathscr{D}$. We associate a root configuration quiver $\Gamma_X$ to $X$. When $\Gamma_X$ is of tree type, we give a proto-exact
structure on $\mathcal{S}_X$ using the partial order induced by the root configuration quiver of $X$. Moreover, we show that the Hall algebra of $\mathcal{S}_X$ induced by this proto-exact structure is isomorphic to the Hall algebra of the proto-abelian category $Rep(\Gamma_X,\mathbb{F}_1)$. In particular, we establish a categorification of the universal enveloping algebra $U(\mathfrak{n}_+)$ in type $A$. In this section, for any subobject $X_F$ of $X$, we label letters of $Q_{X_F}$ using $F$ instead of $[|F|]$ to simplify the notations. \par


\subsection{Root configuration quiver for objects in \texorpdfstring{$\mathscr{D}$}{D}}

We recall some basic properties about quiver representations over $\mathbb{F}_1$.
\begin{definition}\label{def:quiver}
Let $Q=(Q_0,Q_1,s,t)$ be a quiver, where $Q_0$ is the set of vertices,
$Q_1$ is the set of arrows, and $s,t:Q_1\to Q_0$ are the source and target maps.
The \textbf{path category} $\mathbf C(Q)$ associated to $Q$
is defined as follows:
\begin{itemize}
  \item The objects of $\mathbf C(Q)$ are the vertices of $Q$, i.e.\ $\operatorname{Ob}(\mathbf C(Q))=Q_0$.
  \item For $i,j\in Q_0$, the morphisms from $i$ to $j$ are given by
  all finite oriented paths in $Q$ from $i$ to $j$.
  In particular, for each vertex $i\in Q_0$, there is a length-zero path
  $\id_i:i\to i$, which serves as the identity morphism.
  \item Composition of morphisms is given by concatenation of paths.
\end{itemize}
\end{definition}

\cite[Theorem 5]{quiverf1} gives a description of indecomposable modules of a tree quiver over $\mathbb{F}_1$, which is used to construct a surjective algebra homomorphism from the universal enveloping algebra to the Hall algebra of $Rep(Q,\mathbb{F}_1)$ using the Lie algebra morphism. We show that in this case, our construction gives a similar result. \par

\begin{definition}\label{def:F_1}
 Let $Vect(\mathbb{F}_1)$ denote the category of finite vector spaces over $\mathbb{F}_1$. Objects of $Vect(\mathbb{F}_1)$ are pointed finite sets. A pointed set is a set $V$ with an element $0_V\in V$. The morphism set is defined as $\operatorname{Hom}(V,W)=\{\text{maps } f: V\to W \mid f(0_V)=0_W, f_{\mid V\backslash f^{-1}(0_W)} \text{is injective}\}$. A representation of $Q$ over $\mathbb{F}_1$ is defined as a functor from $\mathbf{C}_Q$ to $Vect(\mathbb{F}_1)$.
\end{definition}

\begin{remark}\label{remark:ks}
    The Krull-Schmidt theorem \cite[Theorem 4]{quiverf1} holds for quiver representations over $\mathbb{F}_1$. The following theorem implies that for a quiver $Q$ of tree type, $Q$ only has finitely many indecomposable representations over $\mathbb{F}_1$. 
    \end{remark}

\begin{theorem}\cite[Theorem 5]{quiverf1} \label{thm:indec_reps}
    If the underlying graph of a quiver $Q$ is a tree, indecomposable representations of $Q$ over $\mathbb{F}_1$ correspond to connected subquivers of $Q$ bijectively. We call such a quiver a \textbf{quiver of tree type}. 
\end{theorem}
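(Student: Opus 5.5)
The plan is to construct explicit mutually inverse assignments between isomorphism classes of indecomposable representations of $Q$ over $\mathbb{F}_1$ and connected subquivers of $Q$, using the combinatorial description of a representation as a coloured graph. To a representation $M$ (a functor $\mathbf C(Q)\to Vect(\mathbb{F}_1)$) attach its \emph{coefficient quiver} $\Gamma_M$. Its vertices are the pairs $(i,v)$ with $i\in Q_0$ and $v\in M(i)\setminus\{0\}$. It has an arrow $(i,v)\to(j,M(a)(v))$ for each $a:i\to j$ in $Q_1$ with $M(a)(v)\neq 0$. Because every $M(a)$ is injective off the preimage of $0$, each vertex of $\Gamma_M$ has, for every $a\in Q_1$, at most one outgoing and at most one incoming arrow labelled $a$. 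Consequently the projection $\Gamma_M\to Q$ is a ``local immersion'', i.e.\ injective on arrows at each vertex with a fixed label. Direct sums of representations correspond to disjoint unions of coefficient quivers, since the coproduct in $Vect(\mathbb{F}_1)$ is the wedge of pointed sets. Using the Krull--Schmidt statement of Remark~\ref{remark:ks}, $M$ is indecomposable if and only if $\Gamma_M$ is connected: a connected component of $\Gamma_M$ spans a subrepresentation which is a direct summand.

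The key step is to show that when the underlying graph of $Q$ is a tree and $\Gamma_M$ is connected, the projection $\pi:\Gamma_M\to Q$ is injective on vertices. Suppose two distinct vertices $(i,v)\neq(i,v')$ lie over the same $i$. Choose a path in the underlying graph of $\Gamma_M$ between them of minimal length. Its image under $\pi$ is a closed walk in the tree $Q$ starting and ending at $i$. Every closed walk in a tree contains a backtrack, meaning two consecutive steps traversing the same arrow $a$ in opposite directions. Lift such a backtrack to $\Gamma_M$, and consider the two steps at the turning vertex $x$. If $x$ is the source of both lifted arrows, then both are the unique outgoing $a$-arrow at $x$, so they coincide. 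If $x$ is the target of both, then both are incoming $a$-arrows at $x$, and injectivity of $M(a)$ away from $0$ forces their sources to be equal. In either case the path could be shortened, contradicting minimality. Hence $\pi$ embeds $\Gamma_M$ as a subquiver of $Q$, which is connected and full on arrows between its vertices wherever the maps are nonzero. I expect this step to be the main obstacle, because the ``backtrack lifts to a trivial step'' argument has to handle the two orientations separately, as above. It is precisely where the tree hypothesis is used; in a quiver with a cycle, winding representations appear.

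Conversely, given a connected subquiver $S\subseteq Q$, define $M_S(i)=\{0,\ast\}$ for $i\in S_0$ and $M_S(i)=\{0\}$ otherwise. Set $M_S(a)(\ast)=\ast$ if $a\in S_1$ and $0$ otherwise; this is a functor, since $\mathbf C(Q)$ is free on $Q$. Then $\Gamma_{M_S}\cong S$, so $M_S$ is indecomposable. Finally, an isomorphism of representations induces an isomorphism of coefficient quivers over $Q$. Together with the previous step, a representation is recovered up to isomorphism from the image subquiver, since each $M(i)$ has at most one nonzero element and the maps are determined by which arrows are present. This yields the bijection between isomorphism classes of indecomposables and connected subquivers. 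It also gives the finiteness claimed in Remark~\ref{remark:ks}, because a finite tree has only finitely many subquivers.
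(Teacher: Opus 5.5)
Your proof is correct. Note that the paper gives no argument of its own here: it imports the statement from \cite[Theorem 5]{quiverf1} and uses it as a black box, so your proposal is a self-contained alternative rather than a reconstruction.

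You pass to the coefficient quiver $\Gamma_M$ (the viewpoint of \cite{JunSistko}) and isolate one local fact. Because each $M(a)$ is a function that is injective off $M(a)^{-1}(0)$, every vertex of $\Gamma_M$ has at most one outgoing and at most one incoming arrow with a given label. Hence an interior backtrack of the projected closed walk in the tree lifts to a backtrack in $\Gamma_M$, and a shortest path between two points of one fibre cannot exist. This pinpoints exactly where the tree hypothesis enters and gives $\dim M(i)\le 1$ directly. Since a connected subquiver of a tree is automatically full, it also identifies the image of $\Gamma_M$ with the full subquiver on the support; this is cleaner than your phrase ``full on arrows between its vertices wherever the maps are nonzero''.

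The citation is shorter and stays within Szczesny's conventions, which the paper needs anyway for the Hall algebra. What your route buys is transparency for the splitting mechanism that the paper later invokes only by reference in Lemma~\ref{lem:decomposition_of_sequences}. An admissible monomorphism $K\hookrightarrow M$ identifies $\Gamma_K$ with a full subquiver of $\Gamma_M$. An admissible epimorphism $M\twoheadrightarrow N$ identifies $\Gamma_N$ with the full subquiver on the vertices not sent to $0$. So the connected components of $\Gamma_M$ split the whole sequence.

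Two minor corrections. First, the wedge is not the categorical coproduct in $Vect(\mathbb{F}_1)$: the map $\{0,\ast\}\vee\{0,\ast\}\to\{0,\ast\}$ induced by $(\id,\id)$ is not injective off the preimage of $0$. You should say instead that the direct sum of representations is defined vertexwise as the wedge, which is all you actually use. Second, Krull--Schmidt is not needed for ``$M$ indecomposable iff $\Gamma_M$ connected''. Both directions follow from your component splitting together with $\Gamma_{N\oplus N'}=\Gamma_N\sqcup\Gamma_{N'}$.
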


In fact, \cite[Theorem 5.3]{JunSistko23} shows that a connected quiver is of tree type and only if it has finitely many isomorphism classes of indecomposable representations over $\mathbb{F}_1$.

For any quiver $Q$, we denote the Hall algebra of the category of nilponent representations of $Q$ over $\mathbb{F}_1$ defined in \cite{quiverf1} by $\mathbf{H}(Q):=\mathbf{H}(Rep(Q,\mathbb{F}_1))_{nil}$.

\begin{theorem}\cite[Theorem 8]{quiverf1} \label{thm:univ_enveloping}
   For any quiver $Q$ without loops, we denote the corresponding Kac-Moody algebra by $U(\mathfrak{g})$ with the canonical triangular decomposition \begin{align*}
       U(\mathfrak{g})\simeq U(\mathfrak{n}_{-})\otimes U(\mathfrak{h})\otimes U(\mathfrak{n}_{+}).
   \end{align*}
   We have a Hopf algebra homomorphism $\rho_Q$ from $U(\mathfrak{n}_{+})$ to $\mathbf{H}(Q)$. In particular, the homomorphism $\rho_Q$ is surjective if $Q$ is a tree quiver. When $Q$ is of type $A_n$, the homomorphism $\rho_Q$ is an isomorphism.
\end{theorem}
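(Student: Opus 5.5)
The statement is Szczesny's \cite[Theorem 8]{quiverf1}, so the plan follows the strategy of \emph{loc. cit.}

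\textbf{Step 1: the Hall algebra is a universal enveloping algebra.} The category $Rep(Q,\mathbb{F}_1)_{nil}$ is finitary proto-abelian. Its Hall algebra $\mathbf{H}(Q)$ has basis the isomorphism classes $[M]$, with product $[M]\cdot[N]=\sum_R a^R_{M,N}[R]$. Here $a^R_{M,N}$ counts subrepresentations $N'\subseteq R$ with $N'\cong N$ and $R/N'\cong M$.

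Over $\mathbb{F}_1$ every short exact sequence of pointed sets splits set-theoretically. Hence the coproduct $\Delta([R])=\sum[R/N]\otimes[N]$, summed over subrepresentations $N\subseteq R$, is multiplicative. This makes $\mathbf{H}(Q)$ a graded connected cocommutative Hopf algebra, graded by dimension vectors.

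In this coproduct the primitive elements are exactly the spans of the classes of indecomposables. This holds because $\Delta$ of a direct sum contains the cross terms coming from its summands, and by Krull--Schmidt (Remark~\ref{remark:ks}) the indecomposables are canonical. Milnor--Moore then gives $\mathbf{H}(Q)\cong U(\mathfrak{n}_Q)$, where $\mathfrak{n}_Q$ is the Lie algebra spanned by the classes of indecomposables, with bracket $[M,N]=M\cdot N-N\cdot M$.

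\textbf{Step 2: construct $\rho_Q$.} Send the Chevalley generator $e_i$ to the class $[S_i]$ of the simple representation at vertex $i$. To get a Hopf map $U(\mathfrak{n}_+)\to\mathbf{H}(Q)$, it suffices to check the Serre relations $\operatorname{ad}(e_i)^{1-a_{ij}}(e_j)=0$. Equivalently, we show $\operatorname{ad}([S_i])^{1-a_{ij}}([S_j])=0$ in $\mathbf{H}(Q)$.

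Suppose first there is no arrow between $i$ and $j$. Then extensions between $S_i$ and $S_j$ split, and the bracket vanishes.

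Suppose instead there are $m$ arrows between $i$ and $j$. Then $[[S_i],[S_j]]$ is, up to sign, the sum of the $m$ two-dimensional indecomposables, one supported on each arrow. The iterated bracket $\operatorname{ad}([S_i])^{m+1}([S_j])$ must be shown to vanish. Over $\mathbb{F}_1$ a representation with dimension $m+1$ at $i$ and $1$ at $j$ has each arrow injective on nonzero elements. The vector at $j$ can therefore receive from at most one element of $i$ per arrow. This is a combinatorial counting, with a cancellation check.

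This Serre verification is the step I expect to be the main obstacle. I would try first to argue by primitivity: the element $\operatorname{ad}([S_i])^{m+1}([S_j])$ is primitive of dimension vector $(m+1)e_i+e_j$. By Step 1 it is then a combination of indecomposables with that dimension vector. No such indecomposable exists, because each of the $m$ arrows can link at most one element of $i$ to $j$, so only $m$ elements of $i$ can be connected to $j$. Hence the element is zero.

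\textbf{Step 3: surjectivity for trees.} By Theorem~\ref{thm:indec_reps}, the indecomposables of a tree quiver are the connected subquivers $T$, each with all spaces one-dimensional. I will show by induction on $|T|$ that each $[M_T]$ lies in the subalgebra generated by the $[S_i]$.

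Choose a leaf $v$ of $T$ with neighbour $u$. Then $[S_v]\cdot[M_{T\setminus v}]-[M_{T\setminus v}]\cdot[S_v]=\pm[M_T]$. This holds because the only nonsplit extension glues $S_v$ along the unique arrow between $v$ and $u$. Since the indecomposables generate $\mathbf{H}(Q)$ as an algebra (Step 1), $\rho_Q$ is surjective.

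\textbf{Step 4: isomorphism in type $A_n$.} For $Q$ of type $A_n$ the connected subquivers are the intervals $[a,b]$. Their dimension vectors are exactly the positive roots, each of multiplicity one, so $\dim\mathfrak{n}_Q=\dim\mathfrak{n}_+$ degreewise. The map $\rho_Q$ restricts to a surjective graded Lie map $\mathfrak{n}_+\to\mathfrak{n}_Q$ between spaces of equal graded dimension, hence a Lie isomorphism. By Step 1 and Milnor--Moore, $\rho_Q=U(\rho_Q|_{\mathfrak{n}_+})$ is then an isomorphism.
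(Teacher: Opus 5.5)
Your overall architecture is the right one: Milnor--Moore, killing the Serre element because no indecomposable has dimension vector $(m+1)\alpha_i+\alpha_j$, leaf induction for trees, and a dimension count in type $A$. However, Step~1 is wrong as stated, and Steps~2--4 all rest on it.

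\textbf{The coproduct in Step~1 fails.} Your $\Delta([R])=\sum_{N\subseteq R}[R/N]\otimes[N]$ is not multiplicative and not cocommutative, and indecomposables are not primitive for it. Take $Q\colon 1\to 2$ and let $M_{12}$ be the indecomposable with both spaces one-dimensional. Its only proper nonzero subrepresentation is $S_2$, with quotient $S_1$. Then:
\begin{itemize}
\item $[S_1]\cdot[S_2]=[S_1\oplus S_2]+[M_{12}]$.
\item Your $\Delta([M_{12}])$ contains $[S_1]\otimes[S_2]$ and no term $[S_2]\otimes[S_1]$. So $[M_{12}]$ is not primitive and $\Delta$ is not cocommutative.
\item The coefficient of $[S_1]\otimes[S_2]$ is $2$ in $\Delta([S_1]\cdot[S_2])$ but $1$ in $\Delta([S_1])\Delta([S_2])$.
\item Even for a single vertex, $[S]^2=2[S\oplus S]$ gives $4$ against $2$.
\end{itemize}
Set-theoretic splitting of exact sequences does not yield multiplicativity. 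Your argument for primitivity (cross terms coming from direct summands) is really about a different coproduct. So Milnor--Moore does not apply, and ``primitives $=$ span of indecomposables'' is unjustified. You use that fact in Step~2 to kill the Serre element, in Step~3 for generation by indecomposables, and in Step~4 to restrict $\rho_Q$ to $\mathfrak{n}_+\to\mathfrak{n}_Q$.

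\textbf{The repair.} Use the coproduct dual to $\oplus$, as in \cite{quiverf1} (compare Remark~\ref{rem:graded_co_comm_Hopf_S_X}). Set $\Delta([M])=\sum[M']\otimes[M'']$, summed over isomorphism classes of pairs with $M'\oplus M''\cong M$, each pair counted once. Summing over choices of complementary summands instead fails on the one-vertex example exactly as above. This $\Delta$ is cocommutative. A cross term $[A]\otimes[B]$ determines $[A\oplus B]$, so no cancellation occurs between basis elements, and the primitives are exactly the span of the indecomposables.

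Multiplicativity is the genuinely $\mathbb{F}_1$-specific content of Step~1, and you must prove it. The arrow maps of $R'\oplus R''$ preserve the splitting of nonzero elements. Hence every subrepresentation $N$ equals $(N\cap R')\oplus(N\cap R'')$, with quotient $R'/(N\cap R')\oplus R''/(N\cap R'')$. This gives $a^{A\oplus B}_{M,N}=\sum a^{A}_{M',N'}\,a^{B}_{M'',N''}$, summed over $M'\oplus M''\cong M$ and $N'\oplus N''\cong N$, which is precisely $\Delta(xy)=\Delta(x)\Delta(y)$. This is the property that fails over a field; compare the remark following Lemma~\ref{lem:decomposition_of_sequences}. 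With this coproduct in place, your Steps~2--4 go through as written.
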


\begin{remark}
    The image of $\rho_Q$ is called the \textbf{composition subalgebra} of $\mathbf{H}(Q)$.
\end{remark}

Now we associate with any object in $\mathscr{D}$ a quiver using the root function.

\begin{definition}\label{def:root configuration quiver}
 For any $X=(W,Q,\pi,I)\in \mathscr{D}$, we denote the length of $Q$ by $n$ and the cardinality of $I$ by $m$. The root function of $X$ is denoted by $\rx$. The \textbf{root configuration quiver} $\Gamma_X$ of $X$ is a labeled quiver without multiple arrows defined as follows:
 \begin{enumerate}
     \item The vertex set of $\Gamma_X$ is labeled by $I$.
     \item There exists an arrow from $i$ to $j$ in $\Gamma_X$ if and only if \begin{align} \label{eq:arrows}
         (i-j)\cdot\langle \rx (i), \rx(j)\rangle >0.
     \end{align}
 \end{enumerate}
\end{definition}
In Equation~\eqref{eq:arrows}, the form $\langle\cdot , \cdot \rangle$ is the canonical symmetric bilinear form on $V\times V$ induced by the Coxeter matrix of $W$. Precisely, $\langle\cdot , \cdot \rangle$ is defined by the condition $\langle\alpha_i,\alpha_j\rangle=-cos(\frac{\pi}{\operatorname{ord}(s_is_j)})$.\par

\subsection{Subquiver category}\label{subsection: subquiver category}


For any $X=(W,Q,\pi,I)\in \mathscr{D}$, the set of subobjects of $X$ in $\mathscr{D}$ is represented by $\operatorname{Sub}(X)=\{X_F\}_{F \text{ is an irreducible flat of } X}$, which is a finite set. By slight abuse of notation, we identify $X_F$ with the 
isomorphism
class of subobjects of $X$ it represents, whenever this causes no ambiguity. Recall that any subset $J$ of $[n_X]$ induces a flat $f(J)=\F(\V(J))$ of $X$. In particular, any subset $J$ of $I$ induces a subobject $X_{f(J)}$ of $X$, which is denoted by $X(J)$. Conversely, any irreducible flat of $X$ is of the form $f(J)$ for a subset $J\subseteq I$. We have $\operatorname{Sub}(X)=\{X(J)\}_{J\subseteq I}$. \par

To simplify notations, we consider the skeleton category $\operatorname{Sk}(X)$ of the non-full subcategory of subobjects of $X$, whose object set is $\operatorname{Sub}(X)$ and morphisms are canonical embeddings defined in Definition~\ref{def:subquotients_in_C}. We consider the span category of the slice category of $\operatorname{Sk}(X)$ over $X$. 

Precisely, we have the following definition:

\begin{definition}\label{def:span_X}
    The category $\spanx(\operatorname{Sk}(X)/X)$ is defined as follows:
    \begin{enumerate}
        \item Every object in $\spanx(\operatorname{Sk}(X)/X)$ is a diagram of the form 
        \[
        \begin{tikzcd}
       a_J: X(J)  \arrow[r,hook, "i_J"]  & X ,
        \end{tikzcd}
        \]
        where $i_J=(f(J),f(J))$ is the canonical embedding. (Recall that we label letters of $Q_{X(J)}$ using $f(J)$ instead of $[|f(J)|]$.) Here $J$ ranges over all subsets of $I$.
        \item For $J,H \subseteq I$, the morphism set $\Mor(a_J,a_H)$ between the object \[
        \begin{tikzcd}
       a_J: X(J)  \arrow[r,hook, "i_J"]  & X ,
        \end{tikzcd}
        \] and 
        \[
        \begin{tikzcd}
        a_H: X(H)  \arrow[r,hook, "i_H"]  & X ,
        \end{tikzcd}
        \]
        is the set of diagrams \[
        \begin{tikzcd}
       a_K: X(K)  \arrow[r,hook, "i_K"]  & X ,
        \end{tikzcd}
        \]
        where $K$ ranges over subsets of $J\cap H$. In other words, $X(K)$ ranges over the set of subobjects of $X$ such that the following diagram commutes:
       \[
 \begin{tikzcd}
        &  X  & \\
        X(J)  \arrow[ur,hook, "i_J"]  &  & X(H) \arrow[ul,hook, "i_H"] \\
        & X(K) \arrow[ur,hook, "i_K"] \arrow[uu,hook, "i_K"] \arrow[ul,hook, "i_K"]
        \end{tikzcd}
       \]
       \item For $J,H,L \subseteq I$, the composition of $a_K\in \Mor(a_J,a_h)$ and $a_M\in \Mor(a_H,a_L)$ is the diagram $a_{N}$, where $N=K\cap M$. In other words, the following diagram commutes: 
       \[
 \begin{tikzcd}
       & &  X  & & \\
        X(J)  \arrow[urr,hook, "i_J"]  & & X(H) \arrow[u,hook, "i_H"] & & X(L) \arrow[ull,hook, "i_L"] \\
        & X(K) \arrow[ur,hook, "i_K"] \arrow[ul,hook, "i_K"] & & X(M) \arrow[ur,hook, "i_M"] \arrow[ul,hook, "i_M"] & \\
        & & X(N) \arrow[uu,hook, "i_N"] \arrow[ur,hook, "i_N"] \arrow[ul,hook, "i_N"] & &
        \end{tikzcd}
       \]
    \end{enumerate}
\end{definition}

Since the intersection of sets is associative, the composition of morphisms in $\spanx(\operatorname{Sk}(X)/X)$ is associative. In fact, it is isomorphic to the following category of subquivers on $\Gamma_X$: 
\begin{definition}\label{def:L_X}
    For any $X\in \mathscr{D}$, we consider the following category $\mathcal{L}_X$:\begin{itemize}
        \item Objects in $\mathcal{L}_X$ are labeled subquivers of $\Gamma_X$.
        \item For two subquivers $Q,Q'$ of $\Gamma_X$, morphisms from $Q$ to $Q'$ are given by label preserving partial isomorphisms of quivers. Recall that a label preserving partial isomorphism is a diagram \begin{align}
        Q \hookleftarrow P \hookrightarrow Q' \nonumber
    \end{align} given by an $I$-labeled quiver $P$, which is a subquiver of both $Q$ and $Q'$. 
    \item Compositions of label preserving partial isomorphisms 
        $Q \hookleftarrow P \hookrightarrow Q'$ and $Q' \hookleftarrow P' \hookrightarrow Q''$ is given by the intersection of $P$ and $P'$ in $Q'$.
    \end{itemize} 
\end{definition}
\begin{proposition}\label{prop:isomorphism_categories_spans}
    The category $\spanx(\operatorname{Sk}(X)/X)$ is isomorphic to the category $\mathcal{L}_X$.
\end{proposition}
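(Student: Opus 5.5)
We construct an explicit functor $\Phi:\spanx(\operatorname{Sk}(X)/X)\to\mathcal{L}_X$ and check that it is bijective on objects and on each morphism set. On objects, we send $a_J$ to the full labelled subquiver $\Gamma_X|_J$ of $\Gamma_X$ on the vertex set $J\subseteq I$. On morphisms, a morphism $a_K\in\Mor(a_J,a_H)$ with $K\subseteq J\cap H$ goes to the partial isomorphism $\Gamma_X|_J\hookleftarrow\Gamma_X|_K\hookrightarrow\Gamma_X|_H$.

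The first step is to fix the meaning of "subquiver" in Definition~\ref{def:L_X}. Since $\Gamma_X$ has no multiple arrows and is determined by its vertex labels, the objects of $\mathcal{L}_X$ should be understood as the \emph{full} labelled subquivers of $\Gamma_X$. This is what makes $\Phi$ bijective on objects: $J\mapsto\Gamma_X|_J$ is a bijection between subsets of $I$ and full subquivers. The justification is Proposition~\ref{prop:subobjects_D}: $X(J)=X_{f(J)}$ has root configuration $\{\rx_X(i)\}_{i\in J}$ with the same labels, since the root function is preserved by Lemma~\ref{lem:flats_quadruples_BC}. Hence its root configuration quiver is exactly $\Gamma_X|_J$, because the arrow condition \eqref{eq:arrows} only involves the positions $i,j$ and the roots $\rx(i),\rx(j)$, and the bilinear form on $V(f(J))$ is the restriction of the one on $V_X$.

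Next we treat morphisms. A label preserving partial isomorphism $Q\hookleftarrow P\hookrightarrow Q'$ between full subquivers on $J$ and $H$ is determined by the label set of $P$. That set is some $K\subseteq J\cap H$, and conversely every such $K$ gives one. So $\Phi$ is a bijection $\Mor(a_J,a_H)\to\Mor(\Gamma_X|_J,\Gamma_X|_H)$. Identities correspond, since $a_J$ maps to $K=J$. Compositions correspond too: in $\spanx(\operatorname{Sk}(X)/X)$ composition is $K\cap M$, while in $\mathcal{L}_X$ it is the intersection of $P$ and $P'$ inside $Q'$, which is the full subquiver on $K\cap M$. So $\Phi$ is a functor, and an isomorphism of categories.

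We expect the main obstacle to be the identification $\Gamma_{X(J)}=\Gamma_X|_J$ with consistent labels, and the agreement between the two notions of "subquiver". For the first, we would check carefully that the arrow rule computed inside $X(J)$, with the convention that letters of $Q_{X(J)}$ are labelled by $f(J)$, agrees with the rule in $X$. Two things are needed: that $\langle\cdot,\cdot\rangle$ restricted to $V(f(J))$ is the form of $W_{X(J)}$, which should follow from the simple roots $\beta_F$ in Lemma~\ref{lem:flats_quadruples_BC} being roots of $\Phi_X$, up to normalisation; and that only the sign of $\langle\cdot,\cdot\rangle$ matters in \eqref{eq:arrows}, so any positive rescaling is harmless. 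For the second, if non-full subquivers were intended, we would restrict $\mathcal{L}_X$ to vertex-induced subquivers so that the statement holds.
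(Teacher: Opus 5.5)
Your proposal is correct and matches the paper's proof: the paper sends $a_J$ to the full subquiver of $\Gamma_X$ on $J$ and leaves the rest as ``easy to check'', and your checks on objects, morphism sets, identities and composition (via $K\cap M$) are what that entails. Reading ``subquiver'' as ``full subquiver'' is the intended interpretation. The identification $\Gamma_{X(J)}=\Gamma_X|_J$ that you flag as the main obstacle is not needed here, because objects of $\mathcal{L}_X$ are subquivers of $\Gamma_X$ itself; from Proposition~\ref{prop:subobjects_D} you only need $I\cap f(J)=J$, so that distinct $J$ give distinct objects $a_J$ (the identification is what the paper uses later, in Proposition~\ref{prop:subquivers_subobjects}).
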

\begin{proof}
    For any subset $H\subseteq I$, we send $a_H\in \spanx(\operatorname{Sk}(X)/X)$ to the full subquiver with vertex set $H$ of $\Gamma_X$. It is easy to check that this gives an isomorphism of categories.
\end{proof}
In fact, the category $\mathcal{L}_X$ can be viewed as a subcategory of the following subquiver category $\mathcal{S}_X$.
\begin{definition} \label{def:subquiver_category}
    For any $X\in \mathscr{D}$, we define the \textbf{subquiver category} $\mathcal{S}_X$ to be the category whose objects are finite disjoint unions of labeled quivers $\bigsqcup^t_{i=1} \Gamma_{X_i}$, where $X_i=X_{F_i}$ for a certain irreducible flat $F_i$ of $X$. Morphisms are given by label preserving partial isomorphisms of quivers. A subobject $X_F$ of $X$ represented by an irreducible flat $F$ is \textbf{indecomposable} if the quiver $\Gamma_{X_F}$ associated to $X_{F}$ is connected. By abuse of notation, we also call the object $\Gamma_{X_F}$ an indecomposable object in $\mathcal{S}_X$.
\end{definition}

Notice that any finite direct sum $\oplus_{i=1}^tX_i$ of indecomposable subobjects $X_i$ of $X$ gives an element $\bigsqcup^t_{i=1} \Gamma_{X_i}$ in $\mathcal{S}_X$. Conversely, any connected quiver in $\mathcal{S}_X$ is given by an indecomposable subobject of $X$. And any object in $\mathcal{S}_X$ is a finite disjoint union of connected quivers.

\begin{remark}\label{remark: disjoint union}
    The disjoint union of quivers gives a symmetric monoidal structure on $\mathcal{S}_X$. We call $\Gamma_1 \sqcup \Gamma_2$ the \textbf{direct sum} of $\Gamma_1$ and $\Gamma_2$ in $\mathcal{S}_X$. The direct sum here is neither the product or the coproduct. It satisfies the universal property given in Proposition~\ref{prop:dir_sum_universal_property}. Direct sums of morphisms are defined component-wise.
\end{remark}

\begin{proposition} \label{prop:subquivers_subobjects}
    For any subobject $X_F$ of $X$ given by an irreducible flat $F$,  $\Gamma_{X_F}$ is a subquiver of $\Gamma_X$. Conversely, any subquiver of $\Gamma_X$ is of the form $\Gamma_{X_F}$ for an irreducible flat $F$.
\end{proposition}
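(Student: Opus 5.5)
The plan is to use Proposition~\ref{prop:subobjects_D}, which identifies the irreducible flats of $X \in \mathscr{D}$ with subsets $J \subseteq I = I_X$ via $J \mapsto f(J)$. We then show that for $F = f(J)$ the quiver $\Gamma_{X_F}$ is precisely the full subquiver of $\Gamma_X$ on the vertex set $J$. Throughout, "subquiver" is read as the full subquiver on a vertex subset; this is the convention already used implicitly in Proposition~\ref{prop:isomorphism_categories_spans}.

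\textbf{Vertex sets.} We label the letters of $Q_{X_F}$ by $F$ itself, as agreed at the start of this section. The maximal face $I_{X_F}$ is $I \cap F = J$, since the folded positions of $X_F$ are the folded positions of $X$ lying in $F$ (this is how $X_F$ is built in \cite{BC}). Hence $\Gamma_{X_F}$ has vertex set $J \subseteq I$. Moreover $X_F$ is root independent by Corollary~\ref{cor:subobject_D}, so $\Gamma_{X_F}$ is defined.

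\textbf{Arrows.} For $i, j \in J$, Lemma~\ref{lem:flats_quadruples_BC} gives $\rx_{X_F}(i) = \rx_X(i)$ under the identification $\operatorname{b}_F$. Because $\operatorname{b}_F$ is order preserving, the sign of $i - j$ is unchanged when we pass to $X_F$. So the only thing left to check is
\[
\langle \rx_X(i), \rx_X(j) \rangle_{X} = \langle \rx_{X_F}(i), \rx_{X_F}(j) \rangle_{X_F}.
\]
This compatibility of bilinear forms is the main obstacle. The plan is to show that the form on $V_{X_F} = \V(F)$ coming from the Coxeter matrix of $W_{X_F}$ is the restriction of the form on $V_X$. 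By Lemma~\ref{lem:flats_quadruples_BC}, the simple roots of $\Phi_{X_F}$ are $\beta_F$, and these are roots of $\Phi_X$. The root subsystem $\Phi_X \cap \V(F)$ is the root system of the reflection subgroup generated by the reflections $s_\beta$, $\beta \in \beta_F$ (Dyer, Deodhar). For the canonical unit-length form, the Coxeter matrix entries satisfy
\[
\langle \beta, \beta' \rangle = -\cos\bigl(\pi / \operatorname{ord}(s_\beta s_{\beta'})\bigr)
\]
for canonical simple roots of a reflection subgroup. This is Dyer's result; if his theorem only gives the formula up to a positive scalar on each component, that still suffices, since only the sign of the pairing matters. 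Identifying $W_{X_F}$ with this reflection subgroup through the isomorphism implicit in Lemma~\ref{lem:flats_quadruples_BC}, the two forms agree on $\V(F)$, so $i \to j$ is an arrow of $\Gamma_{X_F}$ if and only if it is an arrow of $\Gamma_X$.

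\textbf{Converse.} Take any full subquiver of $\Gamma_X$ and let $J \subseteq I$ be its vertex set. Then $F = f(J)$ is an irreducible flat by Proposition~\ref{prop:subobjects_D}, and by the argument above $\Gamma_{X_F}$ is the full subquiver on $J$. This proves both directions.
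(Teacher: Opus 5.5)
Your proof is correct and follows the paper's argument. Invariance of the root function under restriction (Lemma~\ref{lem:flats_quadruples_BC}), together with order preservation of $\operatorname{b}_F$, gives invariance of~\eqref{eq:arrows}, and root independence realizes the full subquiver on any $J\subseteq I$ as $\Gamma_{X(J)}$.

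The paper passes over the compatibility of bilinear forms on $\V(F)$ silently, so isolating it is worthwhile. Your appeal to Dyer settles it exactly when the relevant dihedral orders are finite, and by Cauchy--Schwarz throughout finite and affine type. For infinite-order pairs in indefinite type, however, Dyer only gives $\langle\beta,\beta'\rangle\le -1$, which is not a rescaling of $-1$. There the step rests on reading the form on $V_{X_F}$ as the restriction of the form on $V_X$, which the identification $V_{X_F}=\V(F)$ in Lemma~\ref{lem:flats_quadruples_BC} tacitly presupposes.
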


\begin{proof}
    Lemma~\ref{lem:flats_quadruples_BC} implies that the root function is invariant under restriction to irreducible flats. Thus Equation~\eqref{eq:arrows} is invariant under restriction to irreducible flats. \par
    Conversely, for any connected subquiver $Q$ of $\Gamma_X$, the vertex set $Q_0$ of $Q$ is a subset of $I_X$. We have $Q=\Gamma_{X(Q_0)}$ since $X$ is root independent. 
\end{proof}

\begin{proposition}\label{prop:L_X in S_X}
    $\mathcal{L}_X$ is isomorphic to a subcategory of $\mathcal{S}_X$.
\end{proposition}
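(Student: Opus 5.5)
The plan is to write down an explicit functor $\Phi:\mathcal{L}_X\to\mathcal{S}_X$ that is injective on objects and on morphisms, and then identify $\mathcal{L}_X$ with its image. Via Proposition~\ref{prop:isomorphism_categories_spans}, the objects of $\mathcal{L}_X$ are the labeled full subquivers $\Gamma_X|_J$ of $\Gamma_X$ with vertex sets $J\subseteq I$. We send $\Gamma_X|_J$ to the labeled quiver $\Gamma_{X(J)}$. This is an object of $\mathcal{S}_X$, namely the disjoint union of its connected components, each being $\Gamma_{X(J')}$ for the vertex set $J'$ of that component.

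To see that this assignment is well defined and label-preserving, we use Proposition~\ref{prop:subquivers_subobjects} and Lemma~\ref{lem:flats_quadruples_BC}. The root function of $X(J)$ is the restriction of $\rx_X$ to $f(J)$, so with the convention that letters of $Q_{X(J)}$ are labelled by $f(J)$, condition~\eqref{eq:arrows} for $i,j\in J$ holds in $X(J)$ exactly when it holds in $X$. Since the inner product on $\V(f(J))$ is the restriction of the one on $V_X$, the reflection data is unchanged. This gives the equality $\Gamma_{X(J)}=\Gamma_X|_J$ as $I$-labeled quivers.

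We also need each connected component to be of the form $\Gamma_{X_F}$. Take a component with vertex set $J'\subseteq J$. Then $X(J')$ is a subobject of $X$ by Proposition~\ref{prop:subobjects_D}, and its quiver is the full subquiver on $J'$ by the same restriction argument. Distinct $J$ give distinct labeled vertex sets, so $\Phi$ is injective on objects.

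On morphisms, a morphism $a_K:a_J\to a_H$ with $K\subseteq J\cap H$ corresponds in $\mathcal{L}_X$ to the partial isomorphism $\Gamma_X|_J\hookleftarrow\Gamma_X|_K\hookrightarrow\Gamma_X|_H$. We send it to the same span, viewed as a label-preserving partial isomorphism in $\mathcal{S}_X$. Because the labels are the elements of $I$, a label-preserving partial isomorphism between subquivers of $\Gamma_X$ is determined by its labeled domain $P$. So distinct $K$ give distinct morphisms, and $\Phi$ is faithful.

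Functoriality then follows directly. Identities go to identities, since $a_J$ maps to the full span on $J$. Composition in $\mathcal{L}_X$ is intersection $K\cap M$, and composition in $\mathcal{S}_X$ is intersection of the labeled subquivers inside the middle object; these agree because full subquivers on $K$ and $M$ intersect in the full subquiver on $K\cap M$.

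The main subtlety is that $\mathcal{S}_X$ admits more morphisms, for example partial isomorphisms whose domain $P$ is not a full subquiver, or which identify differently labeled vertices. So the image is a non-full subcategory, and the statement should only claim an isomorphism onto a subcategory. One point needs care. If the morphisms of $\mathcal{S}_X$ allow non-full $P$, then the image of a morphism of $\mathcal{L}_X$ must use the full subquiver $\Gamma_X|_K$, and we have to check that the composition in $\mathcal{S}_X$ of two such full spans is again full. This holds because the intersection of full subquivers of $\Gamma_X$ is full. The image is therefore closed under composition, and $\Phi$ is an isomorphism of $\mathcal{L}_X$ onto that subcategory.
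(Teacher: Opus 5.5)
Your proof is correct and follows the paper's route. The paper observes, via Proposition~\ref{prop:subquotients_in_D} and Proposition~\ref{prop:subquivers_subobjects}, that each $J\subseteq I$ gives a unique irreducible flat $f(J)$ with $\Gamma_{X(J)}=\Gamma_X|_J$, and identifies $\mathcal{L}_X$ with the subcategory of $\mathcal{S}_X$ on these objects; that is your functor $\Phi$, with the functoriality checks written out.

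Your closing claim that the image is non-full is mistaken, though it does not affect the statement. The image is in fact the \emph{full} subcategory on $\{\Gamma_{X(J)}\}_{J\subseteq I}$, as the paper says. First, $\mathcal{L}_X$ and $\mathcal{S}_X$ use the same morphism rule, and a label-preserving partial isomorphism can never identify differently labeled vertices. Second, subquivers must be vertex-induced: this convention is forced by Proposition~\ref{prop:isomorphism_categories_spans}, which you yourself rely on. Under it, every morphism $\Gamma_{X(J)}\to\Gamma_{X(H)}$ in $\mathcal{S}_X$ is already of the form $\Gamma_X|_K$ with $K\subseteq J\cap H$.
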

\begin{proof}
    Since $X\in \mathscr{D}$, by Proposition~\ref{prop:subquotients_in_D}, any subset $H\subseteq I$ determines a unique irreducible flat $f(H)$ of $X$ . By Proposition~\ref{prop:subquivers_subobjects}, $\mathcal{L}_X$ is just isomorphic to the full subcategory of $\mathcal{S}_X$ whose objects are given by subsets of $I$.
\end{proof}

\begin{theorem} \label{thm:bij_indec_reps_to_subobjects}
    For any $X\in \mathscr{D}$ such that the underlying graph of $\Gamma_X$ is a tree, there is a bijection $\Psi_X$ from the set of indecomposable representations of $\Gamma_X$ over $\mathbb{F}_1$ to the set of indecomposable subobjects of $X$. 
\end{theorem}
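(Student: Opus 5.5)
The plan is to compose two bijections. The first is given by Theorem~\ref{thm:indec_reps}: since the underlying graph of $\Gamma_X$ is a tree, isomorphism classes of indecomposable representations of $\Gamma_X$ over $\mathbb{F}_1$ correspond bijectively to connected subquivers of $\Gamma_X$. Concretely, a connected subquiver $P$ corresponds to the representation with $\{0,\ast\}$ at the vertices of $P$, $\{0\}$ elsewhere, and identity maps along the arrows of $P$. The second bijection goes from connected subquivers of $\Gamma_X$ to indecomposable subobjects of $X$. Then $\Psi_X$ is the composite.

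For the second bijection, I would send a connected subquiver $P$ with vertex set $P_0\subseteq I_X$ to the subobject $X(P_0)=X_{f(P_0)}$. By Proposition~\ref{prop:subobjects_D}, $f(P_0)$ is an irreducible flat with $I_X\cap f(P_0)=P_0$, and $X(P_0)$ is again root independent.

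The key point is to check that $\Gamma_{X(P_0)}$ equals $P$. Here I would use Lemma~\ref{lem:flats_quadruples_BC}: the root function of $X_{f(P_0)}$ is the restriction of $\rx_X$ via the order-preserving bijection $\bx_{f(P_0)}$. Three consequences follow.
\begin{enumerate}
\item The folded positions of $X(P_0)$ are exactly $P_0$.
\item The sign of $\langle \rx(i),\rx(j)\rangle$ is preserved, since the bilinear form on $\V(f(P_0))$ is the restriction of the one on $V_X$ (the root subsystem of Dyer--Deodhar).
\item The order of positions is preserved, so the sign of $i-j$ is unchanged.
\end{enumerate}
Hence $\Gamma_{X(P_0)}$ is the full subquiver of $\Gamma_X$ on $P_0$. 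This is the content of Proposition~\ref{prop:subquivers_subobjects}.

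Two details need attention in this step. First, "subquiver" should be read as "full subquiver" here. Because $\Gamma_X$ is a tree, a connected subquiver on a vertex set $P_0$ is automatically full: an extra arrow between two vertices of $P_0$ would create a cycle. So this reading costs nothing. Second, $X(P_0)$ is indecomposable in the sense of Definition~\ref{def:subquiver_category}, because its quiver $P$ is connected.

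It remains to show that $P\mapsto X(P_0)$ is a bijection onto indecomposable subobjects.
\begin{enumerate}
\item \emph{Injectivity.} If $P_0\neq P'_0$, the flats differ, since their intersections with $I_X$ differ. By the discussion of subobjects (Proposition~\ref{prop:subquotients_in_D} and Lemma~\ref{lem:C_cofinitary}), distinct subsets $J\subseteq I_X$ give distinct subobjects $X(J)$.
\item \emph{Surjectivity.} An indecomposable subobject is $X_F$ with $F=f(J)$ for some $J\subseteq I_X$, and $\Gamma_{X_F}$ is connected. By the step above, $\Gamma_{X_F}$ is the full subquiver on $J$, which is therefore a connected subquiver mapping to $X_F$.
\end{enumerate}

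The main obstacle I expect is the identification $\Gamma_{X(J)}=\Gamma_X|_J$. One must ensure that relabelling positions through $\bx_F$, and replacing $W_X$ by the reflection subgroup with simple roots $\beta_F$, changes neither the relative order of positions nor the canonical bilinear form. I would first try to verify that the form computed from the Coxeter matrix of $W_{X_F}$ agrees with the restriction of the form on $V_X$ to $\V(F)$, up to a positive scalar on each component, which is what the Dyer--Deodhar result provides. I would then conclude with Lemma~\ref{lem:flats_quadruples_BC}.
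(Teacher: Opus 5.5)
Your proposal is correct and matches the paper's argument: compose Szczesny's bijection (Theorem~\ref{thm:indec_reps}) with $P\mapsto X(P_0)$, using that $\Gamma_{X(J)}$ is the full subquiver of $\Gamma_X$ on $J$. The paper cites Proposition~\ref{prop:isomorphism_categories_spans} for this identification where you cite Proposition~\ref{prop:subquivers_subobjects}, and your checks that connected subquivers of a tree are full and that $J=I_X\cap f(J)$ gives injectivity make explicit what the paper leaves implicit. One caution on your last paragraph: you only need the sign of $\langle\rx(i),\rx(j)\rangle$ to be unchanged, and you should aim for exactly that, because for infinite $W_X$ the restricted form need not be a positive multiple of the canonical form of $W_{X_F}$ (two canonical simple roots of a reflection subgroup can pair to a value $<-1$).
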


\begin{proof}
    By Theorem~\ref{thm:indec_reps}, indecomposable representations of $\Gamma_X$ correspond to connected subquivers of $\Gamma_X$. Precisely, for any subquiver $Q$ of $\Gamma_X$, we set a one-dimensional $\mathbb{F}_1$-vector space at any vertex of $Q$ and $\mathbb{F}_1$-linear maps between them are identity maps. The vector spaces associated to other vertices of $\Gamma_X$ are zero-dimensional spaces. Proposition~\ref{prop:isomorphism_categories_spans} implies that any such connected subquiver $Q$ corresponds to a unique indecomposable subobject $X(Q_0)$. Here $Q_0$ is the vertex set of $Q$.
\end{proof}

\subsection{Proto-exact structure on \texorpdfstring{$\mathcal{S}_X$}{SX}}

We use the root configuration quiver to define a proto-exact structure on $\mathcal{S}_X$.

\begin{definition}[Proto-exact category {\cite[Definition 2.4.2]{DK}}]\label{def:proto-exact category}
A \textbf{proto-exact category} is a category $\mathcal E$ equipped with two classes
of morphisms $\mathcal M, \mathcal P$, whose elements are called
\textbf{admissible monomorphisms} and \textbf{admissible epimorphisms} such that the
following conditions are satisfied:
\begin{enumerate}
  \item[(PE1)] $\mathcal E$ is pointed, i.e.\ has an object $0$ which is both
  initial and final. Any morphism $0 \to A$ is in $\mathcal M$, and any morphism
  $A \to 0$ is in $\mathcal E$.

  \item[(PE2)] The classes $\mathcal M, \mathcal P$ are closed under composition
  and contain all isomorphisms.

  \item[(PE3)] A commutative square \[
        \begin{tikzcd}
        A \arrow[r, hook,"i"] \arrow[d, two heads, "j"] & B \arrow[d, two heads, "j'"] \\
        C \arrow[r, hook, "i'"] & D
        \end{tikzcd}
        \]
        in $\mathcal E$ with
  $i,i'$ admissible monomorphisms and $j,j'$ admissible epimorphisms
  is Cartesian if and only if it is coCartesian.

  \item[(PE4)] Any diagram in $\mathcal E$
  \[
    B \xrightarrow{j'} D \xleftarrow{i'} C
  \]
  with $i'$ an admissible monomorphism and $j'$ an admissible epimorphism,
  can be completed to a biCartesian square in (PE3) with
  $i$ an admissible monomorphism and $j$ an admissible epimorphism.

  \item[(PE5)] Any diagram in $\mathcal E$
  \[
    C \xleftarrow{j} A \xrightarrow{i} B
  \]
  with $i$ an admissible monomorphism and $j$ an admissible epimorphism,
  can be completed to a biCartesian square in (PE3) with
  $i'$ an admissible monomorphism and $j'$ an admissible epimorphism.
\end{enumerate}
\end{definition}

Given an object $X=(W,Q,\pi,I) \in \mathscr{D}$ with root configuration quiver $\Gamma_X$. We suppose that  $\Gamma_X$ is of tree type. Then we have a poset structure on $I$:
\begin{definition}[Poset structure on $I$]\label{def:poset}
   For $a,b\in I$, we set $a\preceq b$ if and only if there exists a path from $b$ to $a$ in $\Gamma_X.$ 
\end{definition}


\begin{definition}[Admissible monomorphisms and epimorphisms] \label{def:admissible_in_S_X}
    For any subsets $J\subseteq H\subseteq I$, $\Gamma_{X(J)}$ is a subquiver of $\Gamma_{X(H)}$. The morphism \begin{align}
       \tau_{J,H}:= \Gamma_{X(J)} \hookleftarrow \Gamma_{X(J)} \hookrightarrow\Gamma_{X(H)} \nonumber
    \end{align} is a \textbf{basic admissible monomorphism} if and only if  for any $a\in J$, the lower order ideal $\{b\in H \mid b\preceq a\}$ is contained in $J$.\par
    Symmetrically, for any subsets $K\subseteq H \subseteq I$, the  morphism \begin{align}
       \beta_{H,K}:= \Gamma_{X(H)} \hookleftarrow \Gamma_{X(K)} \hookrightarrow\Gamma_{X(K)} \nonumber
    \end{align}is a \textbf{basic admissible epimorphism} if and only if for any $a\in K$, the upper order ideal $\{b\in K \mid a\preceq b\}$ is contained in $K$. \par
    The class $\mathcal{M}$ of admissible monomorphisms consists of morphisms of the form \begin{align}
       \bigsqcup_{i=1}^m( \Gamma_{X(J_i)} \hookleftarrow \Gamma_{X(J_i)} \hookrightarrow\Gamma_{X(H_i)} )\nonumber,
    \end{align} where $m\in \mathbb{N}$ and each term \begin{align}
        \Gamma_{X(J_i)} \hookleftarrow \Gamma_{X(J_i)} \hookrightarrow\Gamma_{X(H_i)} \nonumber
    \end{align} is a basic admissible monomorphism for all $1\leq i\leq m$. The class $\mathcal{P}$ of admissible epimorphisms consists of morphisms of the form \begin{align}
       \bigsqcup_{i=1}^m( \Gamma_{X(H_i)} \hookleftarrow \Gamma_{X(K_i)} \hookrightarrow\Gamma_{X(K_i)} )\nonumber,
    \end{align} where $m\in \mathbb{N}$ and each term \begin{align}
        \Gamma_{X(H_i)} \hookleftarrow \Gamma_{X(K_i)} \hookrightarrow\Gamma_{X(K_i)} \nonumber
    \end{align} is a basic admissible epimorphism for all $1\leq i\leq m$. 
\end{definition}

Here we use notations in Section~\ref{subsection: subquiver category}. Any subset $J\subseteq I$ gives a subobject $X(J)$ of $X$. Now we give a proto-exact structure on $\mathcal{S}_X$.

\begin{theorem} \label{thm:proto-exact_S_X}
    When $\Gamma_X$ is a tree, the pair $(\mathcal{M},\mathcal{P})$ defines a proto-exact structure on $\mathcal{S}_X$.
\end{theorem}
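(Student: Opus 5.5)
We verify (PE1)--(PE5) by reducing every statement to set-theoretic combinatorics of vertex sets of the tree $\Gamma_X$. Morphisms in $\mathcal{S}_X$ are label-preserving partial isomorphisms, and labels are elements of $I$. A morphism $Q \hookleftarrow P \hookrightarrow Q'$ is therefore determined by the vertex set $P_0$: since $\Gamma_X$ has no multiple arrows, subquivers of $\Gamma_{X(H)}$ are full by Proposition~\ref{prop:subquivers_subobjects}. Composition becomes intersection of vertex sets inside the middle object. The plan is to first describe the objects precisely, then treat connected objects, and finally extend to disjoint unions.

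For (PE1), the empty quiver is a zero object: there is exactly one partial isomorphism to or from it, namely the one with empty $P$. Morphisms $0\to A$ are disjoint unions of $\tau_{\emptyset,H_i}$, which are trivially basic admissible, and dually for $A\to 0$. For (PE2), isomorphisms are the $\tau_{H,H}$ (and dually $\beta_{H,H}$), possibly permuting components of equal label. Closure under composition comes from two facts. First, $\tau_{K,J}$ followed by $\tau_{J,H}$ equals $\tau_{K,H}$. Second, lower ideals are transitive: if $a\in K$ and $b\preceq a$ with $b\in H$, then $b\in J$ because $J$ is lower in $H$, and then $b\in K$ because $K$ is lower in $J$. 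The dual statement holds for upper ideals. A subtle point to handle here is that $\preceq$ should be read as the order induced from $\Gamma_X$, not from the subquiver. Since $\Gamma_X$ is a tree, a path between two vertices of $H$ is unique. If $H$ is not connected in $\Gamma_X$, that path may leave $H$, and I will check the definition is used consistently (order on $I$ restricted).

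For (PE4), start from $B=\Gamma_{X(H)}\twoheadrightarrow D=\Gamma_{X(K)}$ with $K$ upper in $H$, and $C=\Gamma_{X(L)}\hookrightarrow D$ with $L$ lower in $K$. Working componentwise, we may assume each object is a single $\Gamma_{X(\cdot)}$; general disjoint unions are handled by Lemma~\ref{lem:subquotient_of_direct_sum}-style decompositions. Take $A=\Gamma_{X(J)}$ with $J=(H\setminus K)\cup L$, with $A\hookrightarrow B$ given by $\tau_{J,H}$ and $A\twoheadrightarrow C$ given by $\beta_{J,L}$. Admissibility follows because $H\setminus K$ is lower in $H$ and $L$ is lower in $K$, so their union is lower in $H$. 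Similarly, $L$ is upper in $J$: an element of $H\setminus K$ cannot lie above an element of $L\subseteq K$, since $K$ is upper. (PE5) is the dual construction, $D=\Gamma_{X(K)}$ with $K=(H\setminus J)\cup L$, exactly mirroring the proof of Theorem~\ref{thm:proto-abelian_D}.

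For (PE3), I compute pullbacks and pushouts explicitly in $\mathcal{S}_X$ for squares of admissible morphisms. Admissible monomorphisms have $P$ equal to the source, and admissible epimorphisms have $P$ equal to the target. Commutativity of a square $A\hookrightarrow B\twoheadrightarrow D$, $A\twoheadrightarrow C\hookrightarrow D$ then amounts to the set equation $J\cap K=L$ in $H$. I expect to show that such a square is a pullback if and only if $J=(H\setminus K)\cup L$, and a pushout if and only if $K=(H\setminus J)\cup L$. Given $J\cap K=L$, these two conditions are each equivalent to $J\cup K=H$, which proves biCartesianness. The universal property checks should go as in Theorem~\ref{thm:proto-abelian_D}: a competing cone $(R,R')$ is forced to factor through the intersection of vertex sets, and uniqueness follows because morphisms are determined by vertex sets.

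The main obstacle is that $\mathcal{S}_X$ is larger than $\mathcal{L}_X$. Its objects are formal disjoint unions, possibly with repeated labels, and test objects in the universal properties may have several components carrying the same label. One must show that limits and colimits of admissible squares are computed componentwise, and that cones from such test objects still factor uniquely. My first approach is to decompose any morphism from a disjoint union into its restrictions on components, in the spirit of Proposition~\ref{prop:dir_sum_universal_property}. This reduces the universal property to connected test objects with distinct labels, at which point the single-object set computation above applies.
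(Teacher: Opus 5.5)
Your proposal is correct and follows essentially the paper's route: reduce to a connected middle object $B=\Gamma_{X(H)}$ via the disjoint-union decomposition, complete the diagrams in (PE4)/(PE5) with $(H\setminus K)\cup L$ and $(H\setminus J)\cup L$, and check the universal properties as in Theorem~\ref{thm:proto-abelian_D}. Your observation that, given $J\cap K=L$, both the pullback and the pushout condition reduce to $J\cup K=H$ handles (PE3) more completely than the paper's brief remark. The repeated-label test objects you flag as the main obstacle cause no real trouble: in both universal properties the factoring morphism has the same underlying partial vertex bijection as the given cone map ($T\to B$, resp.\ $B\to T$), so the argument works directly for arbitrary $T$.
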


\begin{proof}
    The zero object $0$ in $\mathcal{S}_X$ is given by the empty set $\emptyset\subseteq I$. Any morphism starting from (resp. ending at) $0$ is admissible by definition. \par
    The class $\mathcal{M}$ and $\mathcal{P}$ are closed under composition since lower (or upper) order ideals are closed under intersection. Any isomorphism of the form $\tau_{H,H}=\beta_{H,H}$ is both a monomorphism and a epimorphism since for any $a\in H$, the lower order ideal $\{b\in H \mid b \preceq a\}$ and the upper bound ideal $\{b\in H \mid a \preceq b\}$ are both subsets of $H$. By Definition~\ref{def:subquiver_category}, a morphism $\psi$ in $\mathcal{S}_X$ is an isomorphism if and only if $\psi=\sqcup_{i=1}^m\psi_i$ and $\psi_i=\tau_{H_i,H_i}=\beta_{H_i,H_i}$ for some subset $H_i\subseteq I$. By Definition~\ref{def:admissible_in_S_X}, $\psi$ lies in both $\mathcal{M}$ and $\mathcal{P}$ since all $\psi_i$ do. Thus all isomorphisms are contained in both $\mathcal{M}$ and $\mathcal{P}$.  \par
    To prove (PE3)-(PE5), without loss of generality, we take $B=\Gamma_{X(K)}$ for some $K\subseteq I$. Then we generalize to $\mathcal{S}_X$ using the direct sum $\sqcup$ since morphisms in $\mathcal{S}_X$ are defined component-wise. Precisely, a  morphism of the form $\sqcup_{i=1}^m\psi_i$ is in $\mathcal{M}$ (respectively, in $\mathcal{P}$) if and only if $\psi_i$ is in $\mathcal{M}$ (respectively, in $\mathcal{P}$) for all $1\leq i\leq m$. \par
    Suppose that we have a diagram \[
    B \xrightarrow{j'} D \xleftarrow{i'} C
  \]
  where $j'\in \mathcal{P}$ and $i'\in \mathcal{M}$. Then there exist subsets $J\subseteq H\subseteq K\subseteq I$ such that $B=\Gamma_{X(K)}$, $D=\Gamma_{X(H)}$ and $C=\Gamma_{X(J)}$. We take $A=\Gamma_{X(H')}$, where $H'=(K\backslash H)\cup J$. Notice that $\Gamma_{X(K)}$ is a subquiver of a quiver of tree type $\Gamma_X$, which is a disjoint union of finitely many quivers of tree type.  For any $a\in H$, the upper order ideal $\{b\in K\mid a\preceq b\}$ is contained in $H$ since $j'$ is admissible. Thus for any $a\in K\backslash H$, we have the inclusion \begin{align}
  \{b\in K\mid b\preceq a\}\subseteq K\backslash H\subseteq H'. \label{eq: ideal one inclusion}\end{align}
Since $i'$ is admissible, for any $a\in J$, we have \begin{align}
    \{b\in K\mid b\preceq a\}&\subseteq \{b\in H\mid b\preceq a\}\cup (K\backslash H) \nonumber \\
    &\subseteq J\cup (K\backslash H) \nonumber \\
    & =H'. \label{eq:ideal inclusion}
\end{align} By the inclusions \eqref{eq: ideal one inclusion} and \eqref{eq:ideal inclusion}, the monomorphism $i=\tau_{(K\backslash H)\cup J, (K\backslash H)\cup J}$ from $A$ to $B$ is admissible. The inclusion~\eqref{eq:ideal inclusion} also implies that for any $a\in J$, we have  \begin{align}
    \{b \in H'\mid b\preceq a\}\subseteq \{b\in K\mid b\preceq a\} \subseteq H'. \label{eq: ideal another inclusion}
\end{align}
This inclusion\eqref{eq: ideal another inclusion} shows that the epimorphism $j=\beta_{(K\backslash H)\cup J, (K\backslash H)\cup J}$ is admissible. By a similar discussion as the proof of Theorem~\ref{thm:proto-abelian_D}, the diagram \[
 \begin{tikzcd}
        A \arrow[r, hook,"i"] \arrow[d, two heads, "j"] & B \arrow[d, two heads, "j'"] \\
        C \arrow[r, hook, "i'"] & D
        \end{tikzcd}
        \]
        is a pullback square. Since $H=(K\backslash H')\cup J$, this diagram is also a pushout square.  In conclusion, the conditions (PE3)-(PE5) hold. The pair $(\mathcal{M},\mathcal{P})$ gives a proto-exact structure on $\mathcal{S}_X$.
\end{proof}


By \cite{DK}, the proto-exact structure defined by $(\mathcal{M}, \mathcal{P})$ can be used to define an associative Hall algebra for $\mathcal{S}_X$. We now describe the admissible proto-exact sequences in this proto-exact structure on $\mathcal{S}_X$. \par
An admissible proto-exact sequence $A\overset{i}{\to}B\overset{j}{\to}C$ in the proto-exact category $\mathcal{S}_X$ is a sequence with $i \in \mathcal{M}, p \in \mathcal{P}$ which can be completed to  
a biCartesian diagram of the form: \[
 \begin{tikzcd}
        A \arrow[r, hook,"i"] \arrow[d, two heads, "0"] & B \arrow[d, two heads, "p"] \\
        \mathbf{0} \arrow[r, hook, "0"] & C.
        \end{tikzcd}
        \]


 If we take $B=X(H)$ for a subset $H\subseteq I$ in the biCartesian diagram above, there exist subsets $J,K\subseteq H$ such that $A=X(J)$ and $C=X(K)$ by Definition~\ref{def:admissible_in_S_X}. Precisely, for subobjects $X(J)$, $X(H)$ and $X(K)$ of $X$, there exists an admissible proto-exact sequence of the form
    \begin{align*}
        \Gamma_{X(J)} \hookrightarrow  \Gamma_{X(H)} \twoheadrightarrow  \Gamma_{X(K)} 
    \end{align*}
   in $\mathcal{S}_X$ if and only if the following conditions are satisfied:\begin{enumerate}
       \item The vertex set of $\Gamma_{X(H)}$ is the disjoint union of the vertex sets of $\Gamma_{X(J)}$ and $\Gamma_{X(K)}$. In other words, $H=J\sqcup K$.
       \item For any $a\in J$, we have $\{b\in H\mid b\preceq a\}\subseteq J$.
       \item For any $a\in K$, we have $\{b\in H\mid a\preceq b\}\subseteq K$.
   \end{enumerate}
   We call such a sequence an \textbf{basic admissible proto-exact sequence} .\par
   
   Given a sequence $A\hookrightarrow B\twoheadrightarrow C$ in $\mathcal{S}_X$, we decompose $B=\bigsqcup _{i=1}^tB_i$ into a disjoint union of connected quivers $B_i$. This sequence is \textbf{admissible proto-exact sequence} in $\mathcal{S}_X$ if and only if we have decompositions \begin{align}
   A=\bigsqcup _{i=1}^t(\bigsqcup^{l_i}_{j=1} A_{i,j})  \text{ and }  C=\bigsqcup _{i=1}^t(\bigsqcup^{l'_i}_{j=1} C_{i,j})  \nonumber 
   \end{align}
   such that \begin{align}
       \bigsqcup^{l_i}_{j=1} A_{i,j} \hookrightarrow B_i \twoheadrightarrow \bigsqcup^{l'_i}_{j=1} C_{i,j} \nonumber
   \end{align}
   is a basic admissible proto-exact sequence for all $1\leq i\leq t$ and $A\hookrightarrow B\twoheadrightarrow C$ is the direct sum of these sequences.

Just as in the proto-abelian case, one can define an equivalence relation on admissible proto-exact sequences with fixed end terms. We use the same notation   $\operatorname{Ext}_{\mathcal{E}}(A',A)_B$ and $\operatorname{Ext}_{\mathcal{E}}(A',A)$ as in Section~\ref{sec:root-independednt_subcat}. 

\begin{lemma}\label{lemma:S_X finitary}
The category $\mathcal{S}_X$ is finitary. 
\end{lemma}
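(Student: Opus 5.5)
To prove Lemma~\ref{lemma:S_X finitary}, I use the notion of finitary from Definition~\ref{def:finitary proto-exact}, now for the proto-exact category $\mathcal{S}_X$. For any two objects $A, C\in \mathcal{S}_X$, I must show that $\Mor(A,C)$ and $\operatorname{Ext}_{\mathcal{S}_X}(C,A)$ are finite sets. The whole argument rests on one fact: everything in sight is built from the finite labelled quiver $\Gamma_X$, whose vertex set is $I$ with $|I|=\operatorname{rank} W_X$.

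For the morphism sets, write $A=\bigsqcup_{i=1}^t \Gamma_{X_i}$ and $C=\bigsqcup_{j=1}^{s}\Gamma_{Y_j}$. A morphism $A\to C$ is a label preserving partial isomorphism $A\hookleftarrow P\hookrightarrow C$. Here $P$ is a labelled quiver embedded in $A$, so it is determined by a subset of the finite vertex set of $A$. Because labels are preserved, the embedding $P\hookrightarrow C$ must send each vertex to a vertex of $C$ with the same label in $I$. $C$ has only finitely many such vertices, namely at most $s$ for each label. The number of choices is therefore bounded by $2^{|A_0|}\cdot s^{|A_0|}$, where $A_0$ is the vertex set of $A$, so $\Mor(A,C)$ is finite. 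This mirrors Proposition~\ref{prop:morphisms_in_C_finite}.

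For the extension sets, I use the description of admissible proto-exact sequences given just before the lemma. If $A\hookrightarrow B\twoheadrightarrow C$ is admissible, $B$ decomposes into connected components $B_i$, each fitting into a basic admissible sequence whose end terms are sub-unions of $A$ and $C$. In each basic sequence, $H=J\sqcup K$, so the vertex multiset of $B$ is the disjoint union of the vertex multisets of $A$ and $C$. Each component $B_i$ is a full subquiver $\Gamma_{X(H_i)}$ for some $H_i\subseteq I$. There are only $2^{|I|}$ such subquivers. The number of components of $B$ is at most $|A_0|+|C_0|$. Hence only finitely many isomorphism classes of middle terms $B$ can occur.

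It remains to bound the sequences for a fixed $B$. The morphisms $A\to B$ and $B\to C$ range over the finite sets $\Mor(A,B)$ and $\Mor(B,C)$ from the first step. So there are finitely many admissible sequences with fixed end terms, and the quotient $\operatorname{Ext}_{\mathcal{S}_X}(C,A)$ is finite. The only delicate point is making sure that the middle term really is controlled by the end terms. This follows from condition (1), $H=J\sqcup K$, applied componentwise, which bounds the size of $B$ by $|A_0|+|C_0|$.
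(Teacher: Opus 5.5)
Your proof is correct and takes essentially the same route as the paper. Morphism sets are finite because only finitely many label-preserving partial isomorphisms exist between finite disjoint unions of subquivers of the finite quiver $\Gamma_X$. Extension sets are finite because admissible sequences decompose componentwise into basic ones with $H=J\sqcup K$ inside $\Gamma_X$; your explicit bound $|B_0|=|A_0|+|C_0|$ on the middle term makes precise the paper's brief remark that every object is a finite disjoint union of subquivers of $\Gamma_X$.
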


\begin{proof}
The morphism sets are finite since a fixed quiver only has finitely many subquivers. For any two subquivers $P, Q$ of $\Gamma_X$, there exists finitely many basic admissible proto-exact sequences starting from $P$ and ending at $Q$ in $\mathcal{S}_X$ with middle term a subquiver of $\Gamma_X$. Since each object of $\mathcal{S}_X$ is a finite disjoint union of subquivers of $\Gamma_X$, the extension sets are also finite.  
\end{proof}

\begin{definition} \label{def:hall_algebra_S_X}
    Fix an object $X\in \mathscr{D}$ such that the underlying graph of $\Gamma_X$ is a tree. We consider the vector space spanned by the set of isomorphism classes $\{[\Gamma]\}_{\Gamma\in \mathcal{S}_X}$ with the following multiplication rule: For two  objects $\Gamma_1$ and $\Gamma_2$ of $\mathcal{S}_X$,
    \begin{align}
        [\Gamma_1]\cdot [\Gamma_2]:= \sum_{K \in \text{Iso}(\mathcal{S}_X)} | \operatorname{Ext}_{\mathcal{S}_X}(\Gamma_2,\Gamma_1)_K| [K].
    \end{align}
    Here $K$ ranges over each 
    isomorphism
    class once. 
    The associative algebra $\mathcal{H}(\mathcal{S}_X)$ obtained is called the \textbf{Hall algebra} of the subobject category $\mathcal{S}_X$.
\end{definition}

The well-definedness and associativity of Definition~\ref{def:hall_algebra_S_X} follows from Theorem~\ref{thm:proto-exact_S_X} by the general machinery of  Hall algebras of finitary proto-exact category developed in \cite{DK}. 

\begin{remark} \label{rem:graded_co_comm_Hopf_S_X}
    If we use the disjoint union to define the comultiplication, we have a graded, connected, and co-commutative Hopf algebra structure on $\mathcal{H}(\mathcal{S}_X)$.  Precisely, for any $\Gamma\in \mathcal{S}_X$, we define the comultiplication by \begin{align}
        \Delta([\Gamma])=\sum_{ \Gamma_1\sqcup \Gamma_2 = \Gamma }[\Gamma_1]\otimes [\Gamma_2] \nonumber.
    \end{align}
    The compatibility of the comultiplication and the multiplication  can be shown by an argument similar to that in  \cite[Section 6]{quiverf1}.
\end{remark}

\subsection{Isomorphic Hall algebras}\label{subsection isomorphic hall algebras}

Now we show the isomorphism between the Hall algebra of $\mathcal{S}_X$ and the Hall algebra $\mathbf{H}(\Gamma_X)$. Before constructing the isomorphism, we introduce a key lemma for quiver representations over $\mathbb{F}_1$. This property only appears for quiver representations over $\mathbb{F}_1$ 
and is completely different from the usual situation over a field.
\begin{lemma} \label{lem:decomposition_of_sequences}
    We fix a quiver $Q$ of tree type and a proto-exact sequence \begin{align}
        K\overset{\tau}{\hookrightarrow} M \overset{j}{\twoheadrightarrow} N \label{eq: proto-exact in rep}
    \end{align}
    for representations of $Q$ over $\mathbb{F}_1$. Suppose that 
\begin{align}
M \cong \bigoplus_{i=1}^t M_i \nonumber
\end{align}
is a decomposition into indecomposable modules. 
Then there exist decompositions
\begin{align}
K \cong \bigoplus_{i=1}^t(\bigoplus^{l_i}_{j=1} K_{i,j}) \nonumber
\text{ and }
N \cong \bigoplus_{i=1}^t(\bigoplus^{l'_i}_{j=1} N_{i,j}) \nonumber
\end{align}
such that the proto-exact sequence~\eqref{eq: proto-exact in rep} decomposes as a direct sum of proto-exact sequences
\begin{align}
0 \to \bigoplus^{l_i}_{j=1} K_{i,j} \to M_i \to \bigoplus^{l'_i}_{j=1} N_{i,j} \to 0 . \label{eq:sum of proto-exact seqs}
\end{align}
\end{lemma}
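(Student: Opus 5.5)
The plan is to work directly with the combinatorial description of representations over $\mathbb{F}_1$. I view a representation $M$ of $Q$ as a pointed set $M_v$ at each vertex together with partial injections $M_\alpha\colon M_{s(\alpha)}\to M_{t(\alpha)}$. I then encode $M$ by its \emph{coefficient quiver} $\widetilde M$: the vertices are the nonzero elements of $\bigsqcup_v M_v\setminus\{0\}$, and there is an arrow $x\to y$ whenever $M_\alpha(x)=y\neq 0$. By \cite[Theorem 4]{quiverf1} and Theorem~\ref{thm:indec_reps}, the decomposition $M\cong\bigoplus_{i=1}^t M_i$ into indecomposables corresponds to the decomposition of $\widetilde M$ into connected components $\widetilde M_1,\dots,\widetilde M_t$. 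Indeed, $M_i$ is the subrepresentation supported on the basis elements of $\widetilde M_i$, and, since $Q$ is a tree, each component maps injectively to a connected subquiver of $Q$.

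Next I describe the given proto-exact sequence in these terms. Following \cite{quiverf1}, an admissible monomorphism $\tau\colon K\hookrightarrow M$ identifies $K$ with a subrepresentation of $M$. It is given by subsets $S_v\subseteq M_v\setminus\{0\}$ closed under the maps $M_\alpha$, in the sense that $M_\alpha(S_{s(\alpha)})\subseteq S_{t(\alpha)}\cup\{0\}$. The cokernel $N$ is then the quotient $M/K$, with $N_v=(M_v\setminus S_v)\cup\{0\}$ and induced maps that send to $0$ anything landing in $S$. Up to the unique isomorphism $N\cong M/K$ compatible with $j$, I may therefore assume $K=S$ and $N=M/S$ as sets of basis vectors. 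The key point is that everything is \emph{basis-preserving}: $\widetilde K$ is the full subquiver of $\widetilde M$ on $S$, and $\widetilde N$ is the full subquiver on the complement $T$, where arrows between $T$-vertices are retained precisely when the image does not lie in $S$. The latter is automatic, because the arrow's target is then a vertex of $T$.

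Now I intersect with the components. I set $S^{(i)}=S\cap \widetilde M_i$ and $T^{(i)}=T\cap\widetilde M_i$. Each $S^{(i)}$ is closed under the structure maps, because $M_i$ is itself a subrepresentation, so it defines a subrepresentation $K^{(i)}\subseteq M_i$ with $M_i/K^{(i)}$ supported on $T^{(i)}$. This gives proto-exact sequences $K^{(i)}\hookrightarrow M_i\twoheadrightarrow M_i/K^{(i)}$. Their direct sum recovers the original sequence, because $S=\bigsqcup_i S^{(i)}$, $T=\bigsqcup_i T^{(i)}$, and the structure maps of $M$ never connect different components. Finally, I decompose $K^{(i)}$ and $N^{(i)}=M_i/K^{(i)}$ into indecomposables $K_{i,j}$ and $N_{i,j}$ by taking connected components of the full subquivers on $S^{(i)}$ and $T^{(i)}$, again using Krull--Schmidt. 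This yields \eqref{eq:sum of proto-exact seqs}.

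I expect the main obstacle to be the second step. There I must justify that an arbitrary admissible mono and its cokernel in $Rep(Q,\mathbb{F}_1)$ are, up to isomorphism, given by a subset of basis vectors and its complement, compatibly with the \emph{chosen} decomposition of $M$. The subtlety is that the isomorphism $M\cong\bigoplus M_i$ is not canonical. To handle this, I would first transport the sequence along the isomorphism to $\bigoplus M_i$ and then use that automorphisms over $\mathbb{F}_1$ permute basis elements. Since monomorphisms over $\mathbb{F}_1$ are injective on nonzero elements, the image of $K$ is literally a subset of basis elements. For the cokernel, I would invoke the explicit description of cokernels of admissible monos in \cite{quiverf1} (quotienting the pointed sets by the image). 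The tree hypothesis on $Q$ is used only in identifying indecomposables with connected components via Theorem~\ref{thm:indec_reps}.
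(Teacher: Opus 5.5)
Your proposal is correct and uses essentially the same idea as the paper: split the nonzero elements of $M$ along the summands $M_i$ and observe that $K$ and $N$ split accordingly. The paper starts from the cokernel side, setting $N_i=j(M_i)$ and pulling back the complementary subrepresentation along $\tau$, whereas you start from the kernel side with $S^{(i)}=S\cap\widetilde M_i$ and $N^{(i)}=M_i/K^{(i)}$. Also, since you work with coefficient quivers rather than with subquivers $Q(M_i)$ of $Q$, your argument never actually uses the tree hypothesis: indecomposables correspond to connected components of the coefficient quiver for any quiver. So your closing remark slightly overstates where that hypothesis enters.
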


\begin{proof}
    For any vertex $v$ of $Q=(Q_0,Q_1)$, we denote the $\mathbb{F}_1$-vector space at this vertex of a representation $R$ of $Q$ by $R_v$. We have \[M_v\backslash\{0\}=\bigsqcup_{i=1}^t((M_i)_v\backslash \{0\}).\]
    Since $Q$ is of tree type, by Theorem 5 in \cite{quiverf1}, the dimension $\dim (M_i)_v$ is either $0$ or $1$ for any $1\leq i\leq t$ and $v\in Q_0$. We denote the full subquiver of $Q$ such that $\dim (M_i)_v=1$ on vertices of this subquiver by $Q(M_i)$. \par
    Fix $1\leq i \leq t$. We take $N_i=j(M_i)$, which is a sub-representation of $N$. Like the proof of Theorem 5 in \cite{quiverf1}, we have $N\cong N_i \oplus N/N_i$ as representation of $Q$. Moreover, $N_i$ can be identified with a full subquiver $Q(N_i)$ of $Q(M_i)$ since $j$ is an epimorphism. Take the full subquiver $Q(K_i)$ of $Q(M_i)$ with vertex set $Q(M_i)_0\backslash Q(N_i)_0$. This subquiver $Q(K_i)$ corresponds to a sub-representation $\tilde{K_i}$ of $M_i$. We have a sub-representation $K_i:=\tau^{-1}(\tilde{K_i})$ of $K$ as in the proof of Theorem 5 in \cite{quiverf1}. Moreover, $K\cong K_i\oplus (K/K_i)$. Note that the proof of  \cite[Lemma 3.2]{FRY} gives a similar structure. \par
    By carrying out this procedure for each $i$, we obtain a family of proto-exact sequences
\begin{align}
K_i \hookrightarrow M_i \twoheadrightarrow N_i . \nonumber
\end{align}    
For each $1\leq i\leq t$, take decompositions into indecomposable modules \begin{align}
N_i \cong \bigoplus_{j=1}^{l'_i} N_{i,j} \text{ and } K_i \cong \bigoplus_{j=1}^{l_i} K_{i,j}.\nonumber
\end{align}
We have the proto-exact sequence~\eqref{eq: proto-exact in rep} is given by the direct sum of the proto-exact sequences~\eqref{eq:sum of proto-exact seqs} above for all $1\leq i\leq t$.
\end{proof}

\begin{remark}\label{remark:difference F_1 and field}
    Lemma~\ref{lem:decomposition_of_sequences} does not work for quiver representations over a field. For example, we consider the representation of the quiver $Q: 1\to 2\to 3$ and denote the indecomposable module of $Q$ with dimension vector $(\dim M_1,\dim M_2, \dim M_3)$ by $M_{\dim M_1,\dim M_2, \dim M_3}$. The non-split exact sequence \begin{align}
        M_{0,1,1}\hookrightarrow M_{0,1,0}\oplus M_{1,1,1} \twoheadrightarrow M_{1,1,0} \nonumber
    \end{align}
    can not be decomposed into a direct sum of nonzero exact sequences. \par
    This is one of many differences between representation categories over the usual finite fields $\mathbb{F}_q$ and over $\mathbb{F}_1$. 
    We already mentioned that a connected quiver has finitely many indecomposable representations over $\mathbb{F}_1$, up to isomorphism, if and only if it is a tree. This is of course drastically different to the case of $\mathbb{F}_q$-representations, where Gabriel's theorem states that a connected quiver has finitely many indecomposable representations  up to isomorphism if and only if it is an orientation of a Dynkin diagram of type $ADE$.  
    Further, contrary to the case over $\mathbb{F}_q$, the Auslander-Reiten theory does not work directly for quiver representations over $\mathbb{F}_1$, and the behavior of projective modules is rather surprising. See \cite{FRY,JunSistko, Sistko26} for more discussions on these differences.
\end{remark}
\begin{theorem} \label{thm:iso_hall_algebras}
     Fix an object $X\in \mathscr{D}$ such that the underlying graph of $\Gamma_X$ is a tree. The Hall algebra $\mathcal{H}(\mathcal{S}_X)$ is isomorphic to the Hall algebra $\mathbf{H}(\Gamma_X)$ of the category of (nilponent) representations of $\Gamma_X$ over $\mathbb{F}_1$.
\end{theorem}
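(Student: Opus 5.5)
We will build an explicit linear bijection between the standard bases of $\mathcal{H}(\mathcal{S}_X)$ and $\mathbf{H}(\Gamma_X)$, and then check that it matches the structure constants. Since $\Gamma_X$ is a tree, Theorem~\ref{thm:indec_reps} and the Krull--Schmidt theorem (Remark~\ref{remark:ks}) show that isomorphism classes of representations of $\Gamma_X$ over $\mathbb{F}_1$ are exactly finite multisets of connected subquivers of $\Gamma_X$. Every such representation is automatically nilpotent, because a tree has no oriented cycles. On the other side, an object of $\mathcal{S}_X$ is a finite disjoint union $\bigsqcup_i \Gamma_{X_i}$. By Proposition~\ref{prop:subquivers_subobjects} and the description of indecomposables, we may decompose it further into connected labeled subquivers of $\Gamma_X$.

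Isomorphisms in $\mathcal{S}_X$ are label-preserving, so the isomorphism class of an object is the multiset of vertex sets of its connected components. This is where Theorem~\ref{thm:bij_indec_reps_to_subobjects} enters: the bijection $\Psi_X$ extends additively to a bijection
\[
\Phi:\ \text{Iso}(\mathcal{S}_X)\to \text{Iso}(Rep(\Gamma_X,\mathbb{F}_1)_{nil}),\qquad \Bigl[\bigsqcup_i Q_i\Bigr]\mapsto \Bigl[\bigoplus_i \Psi_X^{-1}(Q_i)\Bigr].
\]
We define $\Phi$ linearly on the Hall algebras; it is a vector space isomorphism and sends the unit to the unit.

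It remains to show that $\Phi$ is multiplicative, that is, $|\operatorname{Ext}_{\mathcal{S}_X}(\Gamma_2,\Gamma_1)_K| = |\operatorname{Ext}_{\mathbb{F}_1}(\Phi\Gamma_2,\Phi\Gamma_1)_{\Phi K}|$. We reduce both sides to a count over the components of the middle term. On the $\mathcal{S}_X$ side, the description of admissible proto-exact sequences following Theorem~\ref{thm:proto-exact_S_X} shows that every sequence with middle term $K=\bigsqcup_i K_i$ is a direct sum of basic sequences $A_i\hookrightarrow K_i\twoheadrightarrow C_i$. Here $A_i$ is the full subquiver on a lower-closed subset $J_i$ of the vertices of $K_i$ with respect to $\preceq$, and $C_i$ is the full subquiver on the complementary upper-closed set. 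On the $\mathbb{F}_1$ side, Lemma~\ref{lem:decomposition_of_sequences} gives the analogous decomposition along $\Phi K=\bigoplus_i M_i$.

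For an indecomposable $M_i$, supported on a connected subtree $T$ with one-dimensional spaces and identity maps, a subrepresentation is the same as a subset of vertices of $T$ closed under following arrows. This is a down-closed set for $\preceq$, since $a\preceq b$ means there is a path from $b$ to $a$. The cokernel is then supported on the complement. So for each component the sub/quotient data on the two sides are given by the same vertex-set partitions $J_i\sqcup(T\setminus J_i)$, and the induced sub and quotient pieces correspond under $\Phi$ because their connected components are the same subquivers.

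The main obstacle is matching equivalence classes of extensions, not merely the sequences themselves, since the two categories differ (morphisms in $\mathcal{S}_X$ are partial isomorphisms). The plan is to fix $A$ and $C$ and count equivalence classes as the number of sequences divided by automorphism factors. In $\mathcal{S}_X$, the labeling forces every automorphism to be the identity. On the $\mathbb{F}_1$ side, automorphisms permute isomorphic summands. We would therefore use the Riedtmann-type formula from \cite{quiverf1}, or rather count ``subrepresentations $L\subseteq M$ with $L\cong A$ and $M/L\cong C$''. Szczesny's Hall product on $\mathbf{H}(Q)$ is defined exactly by this subobject count. Its analogue in $\mathcal{S}_X$ is the number of admissible subobjects $\Gamma_{X(J)}\hookrightarrow K$ with the prescribed isomorphism types of sub and quotient. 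Because $\mathcal{S}_X$ has only trivial automorphisms and labeled quivers, that number equals $|\operatorname{Ext}_{\mathcal{S}_X}(\Gamma_2,\Gamma_1)_K|$.

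By the componentwise analysis above, the subobject counts on the two sides agree, with one check still needed. When $K$ has isomorphic components, they occur in $\mathcal{S}_X$ as repeated copies of the same labeled subquiver, while in $\mathbf{H}(\Gamma_X)$ they are repeated summands. In both cases, choosing a subobject amounts to choosing a down-closed subset in each copy independently, so the counts still coincide. This gives $\Phi([\Gamma_1]\cdot[\Gamma_2])=\Phi[\Gamma_1]\cdot\Phi[\Gamma_2]$, and hence $\Phi$ is the required algebra isomorphism.
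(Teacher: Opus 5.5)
Your route is the paper's own: extend $\Psi_X$ additively to isomorphism classes, reduce to an indecomposable middle term via Lemma~\ref{lem:decomposition_of_sequences} and the componentwise description of admissible sequences in $\mathcal{S}_X$, and match sub/quotient data on a single component. You justify the local matching more explicitly than the paper, which only attributes it to Szczesny: arrow-closed vertex sets of a tree-supported indecomposable are exactly the $\preceq$-lower sets. This uses that a directed path in the tree $\Gamma_X$ between two vertices of a connected subtree stays inside that subtree.

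The problem is the normalization paragraph. The claim that labels force every automorphism in $\mathcal{S}_X$ to be the identity is false in exactly the case your last paragraph raises. If $K$ contains two copies of the same labeled subquiver, the label-preserving isomorphism exchanging them is a non-identity automorphism. In general $\operatorname{Aut}\bigl(\bigsqcup_i Q_i^{\sqcup n_i}\bigr)\cong\prod_i S_{n_i}$, the same group as on the $\mathbb{F}_1$ side. Consequently $|\operatorname{Ext}_{\mathcal{S}_X}(\Gamma_2,\Gamma_1)_K|$ is not the number of admissible subobjects, and your $\Phi$ is not multiplicative into Szczesny's subobject-count normalization of $\mathbf{H}(\Gamma_X)$. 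Concretely, take a one-vertex $\Gamma=\Gamma_{X(\{a\})}$. There are two admissible sequences $\Gamma\hookrightarrow\Gamma\sqcup\Gamma\twoheadrightarrow\Gamma$, and the swap interchanges them. So $[\Gamma]\cdot[\Gamma]=[\Gamma\sqcup\Gamma]$ in $\mathcal{H}(\mathcal{S}_X)$, while $[S_a]\star[S_a]=2[S_a\oplus S_a]$ in $\mathbf{H}(\Gamma_X)$.

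The repair is routine bookkeeping. Let $F^K_{C,A}$ be the number of admissible subobjects of $K$ isomorphic to $A$ with quotient isomorphic to $C$. On both sides, the number of admissible sequences $A\hookrightarrow K\twoheadrightarrow C$ with fixed end terms is $F^K_{C,A}\,|\operatorname{Aut}A|\,|\operatorname{Aut}C|$. Moreover, $\operatorname{Aut}K$ acts freely on these sequences: an automorphism fixing both the inclusion and the quotient map fixes every vertex, respectively every nonzero basis element. Hence in both categories
\[
|\operatorname{Ext}(C,A)_K|=F^K_{C,A}\,|\operatorname{Aut}A|\,|\operatorname{Aut}C|/|\operatorname{Aut}K|.
\]
Your componentwise argument matches the numbers $F^K_{C,A}$, and the automorphism groups agree. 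This gives the Ext-count equality you originally stated as the goal, so $\Phi$ is an isomorphism onto the Ext-normalized Hall algebra of $\operatorname{Rep}(\Gamma_X,\mathbb{F}_1)$. The rescaled map $[K]\mapsto|\operatorname{Aut}K|\,[\Phi K]$ is then an algebra isomorphism $\mathcal{H}(\mathcal{S}_X)\to\mathbf{H}(\Gamma_X)$ in characteristic $0$. The paper's proof does not address this normalization either; it only asserts that $\Psi$ bijects admissible sequences.
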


\begin{proof}
    Theorem~\ref{thm:bij_indec_reps_to_subobjects} implies the existence of a bijection $\Psi$ from the set of equivalence classes of indecomposable objects in $\mathcal{S}_X$ to the set of equivalence classes of indecomposable representations of $\Gamma_X$ over $\mathbb{F}_1$ such that $\dim _i(\Psi(Q))=\delta_Q(i)$ for any subquiver $Q$ of $\Gamma_X$. Define $\Psi(\bigsqcup_i \Gamma_i)$ to be $\bigoplus_i(\Psi(\Gamma_i))$. The category $Rep(\Gamma_X,\mathbb{F}_1)$ is proto-abelian by Section 6 in \cite{quiverf1}. Theorem 8 in \cite{quiverf1} implies that for any connected subquiver $\Gamma$ of $\Gamma_X$, a sequence \begin{align}
        \Psi(\Gamma_1)\hookrightarrow\Psi(\Gamma)\twoheadrightarrow \Psi(\Gamma_2) \nonumber
    \end{align} 
    is a proto-exact sequence in $Rep(\Gamma_X,\mathbb{F}_1)$ if only if the sequence \begin{align}
        \Gamma_1\hookrightarrow\Gamma\twoheadrightarrow \Gamma_2 \nonumber
    \end{align} is admissible proto-exact in $\mathcal{S}_X$. By Lemma~\ref{lem:decomposition_of_sequences}, any proto-exact sequence in $Rep(\Gamma_X,\mathbb{F}_1)$ can be decomposed into a direct sum of proto-exact sequences each of which has an indecomposable middle term. Comparing this with Definition~\ref{def:admissible_in_S_X}, we conclude that $\Psi$ sends admissible proto-exact sequences in $\mathcal{S}_X$ to proto-exact sequences induced by the proto-abelian structure in $Rep(\Gamma_X,\mathbb{F}_1)$ bijectively. In this way, Definition~\ref{def:hall_algebra_S_X} is just the reformulation of the definition of the Hall algebra $\mathbf{H}(\Gamma_X)$ using the proto-abelian structure in \cite{quiverf1} via the bijection $\Psi$.
\end{proof}


\begin{remark}\label{remark: not equivalent}
    Despite the isomorphism of Hall algebras, the categories $\mathcal{S}_X$ and $Rep(\Gamma_X, \mathbb{F}_1)$ are not equivalent. In $\mathcal{S}_X$, for any subquiver $P$ of a quiver $Q$, there always exist maps \begin{align}
     r_P= Q \hookleftarrow P \hookrightarrow P \text{ and } i_P= P \hookleftarrow P \hookrightarrow Q .\nonumber  
    \end{align}
    We have $r_P\circ i_P= \id_P \in \Mor(P,P)$. In other words, any such epimorphism $r_P$ has a retraction. This is not the case in $Rep(\Gamma_X, \mathbb{F}_1)$. For example, there exists no retraction from $\mathbb{F}_1\overset{\id}{\to} \mathbb{F}_1\to 0$ to $\mathbb{F}_1\overset{\id}{\to} \mathbb{F}_1\overset{\id}{\to} \mathbb{F}_1$ in the category of representations of the quiver $1\to 2\to 3$.
\end{remark}

\subsection{Flips and Reflections}
We show that we can give an explanation of flips using reflection of the root configuration quiver with the help of the correspondence between subword complexes and root configuration quivers defined in Section~\ref{subsection isomorphic hall algebras}. \par

 In this subsection, for any 
 $i\in I$, we use $i'$ to represent the unique index in $[n]\backslash I$ such that $\{i,i'\}$ is a flippable flat of $X$. 
The interval $[i,i']$ (if $i<i'$) or $[i',i]$ (if $i'<i$) is called the \textbf{reflection interval} of the flip $\{i,i'\}$. The complementary of the reflection interval in $[n_X]$ is called the \textbf{stable interval} of the flip $\{i,i'\}$. The stable interval is the disjoint union of two (possibly empty) intervals. According to their positions relative to the reflection interval, we call them the \textbf{left stable interval} or the \textbf{right stable interval} of the flip $\{i,i'\}$.

\begin{definition}
    A vertex $i$ of $\Gamma_X$ is called \textbf{special} if 
    all adjacent vertices of $i$ in $\Gamma_X$ are either all in the reflection interval of the flip $\{i,i'\}$ or all in the stable interval of the flip $\{i,i'\}$.
\end{definition}

\begin{theorem} \label{thm:flips_quivers}
  Assume that  $X=(W,Q,\pi,I)\in \mathscr{D}_{fin}$ with $\pi = w_0$ 
    is flipped to  $Y$ by the flat $\{i,i'\}$ for a special vertex $i$. Then $\Gamma_Y$ is obtained from $\Gamma_X$ by reverting all arrows adjacent to $i$ and relabeling the vertex $i$ of $\Gamma_X$ by $i'$.
\end{theorem}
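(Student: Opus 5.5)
We work directly with the explicit description of the root function of the flipped object $Y=\theta_{\{i,i'\}}(X)$ given in the proof of Proposition~\ref{prop:flip_in_C}, via \cite[Lemma 2.6]{BC}. Assume first $i<i'$; the case $i'<i$ is symmetric. Then $\rx_X(i')=\rx_X(i)=:\beta$, and
\[
\rx_Y(j)=s_\beta(\rx_X(j)) \text{ for } i<j\le i', \qquad \rx_Y(j)=\rx_X(j) \text{ otherwise}.
\]
At the position $i$, which becomes traversing in $Y$, the new folded position $i'$ carries $\rx_Y(i')=-\beta$. Thus $I_Y=(I_X\setminus\{i\})\cup\{i'\}$, and the vertex set of $\Gamma_Y$ is obtained from that of $\Gamma_X$ by relabelling $i$ as $i'$. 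It remains to compare, pair by pair, the sign of $(a-b)\langle \rx(a),\rx(b)\rangle$ in Equation~\eqref{eq:arrows} for $X$ and for $Y$.

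\textbf{Step 1: pairs not involving $i$.} Take $a,b\in I_X\setminus\{i\}$. If both lie in the stable interval, the roots and the positions are unchanged. If both lie in the reflection interval, both roots are replaced by their images under the orthogonal reflection $s_\beta$, which preserves $\langle\cdot,\cdot\rangle$. In both cases the arrow between $a$ and $b$ is unchanged.

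The mixed case needs more work. Here $a$ is in the reflection interval and $b$ is stable, and we have $\langle s_\beta\rx(a),\rx(b)\rangle=\langle\rx(a),\rx(b)\rangle-2\langle\rx(a),\beta\rangle\langle\beta,\rx(b)\rangle$ (with the normalisation $\langle\beta,\beta\rangle=1$). The correction term vanishes unless both $a$ and $b$ are adjacent to $i$, or at least nonorthogonal to $\beta$. The speciality hypothesis is designed exactly so that the neighbours of $i$ do not straddle the two intervals, so that this term is zero. I expect this to be the main obstacle. Speciality controls adjacency in $\Gamma_X$, i.e. the sign of $\langle\cdot,\cdot\rangle$ with position weighting, but not orthogonality. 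So I would combine the tree/root-independence structure with $\pi=w_0$ and $W$ finite to show that $\langle \rx(a),\beta\rangle\neq0$ forces an arrow between $a$ and $i$.

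If this fails in general, I would try to prove directly that $\langle\rx(a),\beta\rangle\langle\beta,\rx(b)\rangle$ does not change the sign of the pairing. For this I would use that in the case $\pi=w_0$ the roots of a facet satisfy the sign-coherence/duality properties of \cite{cls,pilaud_stump_15}: the $c$-cluster-type description gives that roots of a facet pairwise have $\langle\cdot,\cdot\rangle\le 0$ after orienting by position.

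\textbf{Step 2: pairs involving $i$.} Take $b\ne i$ in $I_X$. In $Y$ the vertex has position $i'$ and root $-\beta$.

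If $b$ is stable, then $b<i$ or $b>i'$. In either case $b-i$ and $b-i'$ have the same sign, the root of $b$ is unchanged, and $\langle\rx(b),-\beta\rangle=-\langle\rx(b),\beta\rangle$. The arrow is therefore reversed.

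If $b$ lies in the reflection interval, then $i<b<i'$, so the sign of the position difference flips. At the same time $\langle s_\beta\rx(b),-\beta\rangle=\langle\rx(b),\beta\rangle$, so the pairing keeps its sign. Again the arrow is reversed.

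Non-adjacency is preserved in both cases, since the pairing vanishes in $X$ if and only if it vanishes in $Y$.

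\textbf{Step 3: conclusion.} Steps 1 and 2 together show that $\Gamma_Y$ is obtained from $\Gamma_X$ by relabelling $i$ as $i'$ and reversing exactly the arrows at $i$. The case $i'<i$ follows by the same computation with the roles of $i$ and $i'$ exchanged.
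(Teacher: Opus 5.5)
Your route is the paper's own. It uses the explicit root function of $Y$ from \cite[Lemma 2.6]{BC}, the $s_\beta$-invariance of $\langle\cdot,\cdot\rangle$, speciality for pairs of vertices other than $i$, and the stable/reflection-interval sign count for the arrows at $i$; your Step 2 is exactly right.

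The only place you stop short is the mixed case of Step 1, and the obstacle you see there is not one, because adjacency in $\Gamma_X$ \emph{is} non-orthogonality. For $a\neq i$, the numbers $(a-i)\langle\rx_X(a),\beta\rangle$ and $(i-a)\langle\beta,\rx_X(a)\rangle$ are negatives of each other by symmetry of the form. So by the definition of $\Gamma_X$ there is an arrow between $a$ and $i$, in one direction or the other, if and only if $\langle\rx_X(a),\beta\rangle\neq 0$. Hence the correction term $2\langle\rx_X(a),\beta\rangle\langle\beta,\rx_X(b)\rangle$ can be nonzero only if $a$ and $b$ are both neighbours of $i$. Speciality rules this out when one lies in the reflection interval and the other is stable.

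The paper makes the same point by splitting on adjacency rather than on intervals. A non-neighbour's root is fixed by $s_\beta$, and two neighbours lie on the same side. Neither the tree structure, nor $\pi=w_0$, nor finiteness of $W$ is used in this step; in the argument those hypotheses only ensure that $i'$ exists.

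You should drop the fallback through cluster-type sign-coherence. It is unnecessary, and a rule of the kind you invoke, e.g. $\langle\rx(a),\rx(b)\rangle\le 0$ for $a<b$ in a facet, cannot hold. Such a rule would orient every root configuration quiver by position order. But a flip reverses the arrow between $i$ and a stable neighbour without changing their relative positions. Concretely, in Example~\ref{ex:standard_A_3} one has $\langle\rx_Y(2),\rx_Y(9)\rangle=\langle\alpha_2,-\alpha_3\rangle=\tfrac12>0$ with $2<9$.
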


\begin{proof}
     Suppose that $I_X= J\cup \{i\}$ and $I_Y=J\cup \{i'\}$. Denote the length of $Q_X$ and $Q_Y$ by $n$. We first suppose that $i< i'$. Using \cite[Lemma 2.6]{BC}, we have $\rx_X(i')=\rx_X(i)$. 
     The root function of $Y$ can be deduced from the root function of $X$ as follows: \begin{align}
        \rx_Y(j)=\begin{cases}
            s_{X,i}(\rx_X(j)) & \text{if } i<j\leq i' \\
            \rx_X(j) & \text{else,}
        \end{cases} \nonumber
    \end{align}
     where $s_{X,i}$ is the reflection orthogonal to the root $\rx_X(i)$. In particular, we have $\rx_Y(i')=-\rx_X(i)$. \par
     Fix two vertices $j_1,j_2\in J$. If one of $j_1$ and $j_2$ is not adjacent to $i$ in $\Gamma_X$, we have \begin{align}
         \langle \rx_Y(j_1),\rx_Y(j_2)\rangle= \langle \rx_X(j_1),\rx_X(j_2)\rangle, \nonumber
     \end{align}
     since the reflection $s_{X,i}$ preserves the bilinear form $\langle\cdot , \cdot \rangle$ and $\langle \rx_X(j),\rx_X(i)\rangle=0$ for any $j$ not adjacent to $i$ in $\Gamma_X$. Thus $\Gamma_Y$ coincides with $\Gamma_X$ outside the neighborhood (the full subquiver of $\Gamma_X$ containing $i$ and its adjacent vertices) of $i$.\par
     Now if $j_1$ and $j_2$ are both adjacent to $i$, we have \begin{align}
         \langle \rx_Y(j_1),\rx_Y(j_2)\rangle=\begin{cases}
       \langle \rx_X(j_1),\rx_X(j_2)\rangle  & \text{if } j_1,j_2\in [i+1,i']\\
       \langle s_{X,i}(\rx_X(j_1)),s_{X,i}(\rx_X(j_2))\rangle  & \text{if } j_1,j_2\in [n]\backslash [i+1,i'],
         \end{cases} \nonumber
     \end{align}
     since $i$ is a special vertex. Since the reflection $s_{X,i}$ preserves the bilinear form $\langle\cdot , \cdot \rangle$, we have $\Gamma_Y$ coincides with $\Gamma_X$ for the adjacent vertices of $i$.\par
     Now we consider the arrow between $j_1$ and $i$. We have \begin{align}
        \langle \rx_Y(j_1),\rx_Y(i')\rangle=\begin{cases}
             \langle\rx_X(j_1),-\rx_X(i)\rangle & \text{if } j_1\in [n]\backslash [i,i'] \\
              \langle s_{X,i}(\rx_X(j_1)),-\rx_X(i)\rangle= \langle\rx_X(j_1),\rx_X(i)\rangle& \text{if } j_1\in  [i,i'].
         \end{cases} \nonumber
     \end{align}
     In either case, we have \begin{align}
     [(j_1-i')\langle\rx_Y(j_1),-\rx_Y(i')\rangle] \cdot  [(j_1-i)\langle\rx_X(j_1),-\rx_X(i)\rangle]<0.  \nonumber
     \end{align}
     Thus $\Gamma_Y$ is obtained from $\Gamma_X$ by reverting all arrows adjacent to $i$ and relabeling the vertex $i$ of $\Gamma_X$ by $i'$. 
     
     The case $i>i'$ 
     is proved by a similar argument.
\end{proof}

The change of orientation at a special vertex does not change the subquiver category up to isomorphism.
\begin{proposition}\label{prop: flip and reflection}
    Under the assumption of Theorem~\ref{thm:flips_quivers}, the subquiver category $\mathcal{S}_X$ and $\mathcal{S}_Y$ are isomorphic.
\end{proposition}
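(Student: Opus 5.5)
We will construct an explicit isomorphism of categories $\Phi:\mathcal{S}_X\to\mathcal{S}_Y$ induced by the relabelling bijection $\phi: I_X\to I_Y$ which is the identity on $J=I_X\setminus\{i\}$ and sends $i\mapsto i'$. By Theorem~\ref{thm:flips_quivers}, $\Gamma_Y$ is obtained from $\Gamma_X$ by reversing all arrows adjacent to $i$ and relabelling $i$ by $i'$. So $\phi$ induces a bijection between vertex sets that is an isomorphism of the underlying unoriented labelled graphs. The point is that objects and morphisms of $\mathcal{S}_X$ only depend on \emph{which} subsets of vertices occur and on inclusions among them, not on orientations.

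First, on objects. By Proposition~\ref{prop:subquivers_subobjects} and Proposition~\ref{prop:subquotients_in_D}, a connected object of $\mathcal{S}_X$ is $\Gamma_{X(H)}$ for a subset $H\subseteq I_X$ with connected full subquiver. Since $X$ and $Y$ are root independent and subobjects are parametrised by subsets of $I_X$ (resp. $I_Y$), we will set $\Phi(\Gamma_{X(H)}):=\Gamma_{Y(\phi(H))}$. We then check that $\Gamma_{Y(\phi(H))}$ is the full subquiver of $\Gamma_Y$ on $\phi(H)$, which is obtained from $\Gamma_{X(H)}$ by reversing arrows at $i$ (if $i\in H$) and relabelling. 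In particular, connectedness is preserved, so indecomposables go to indecomposables. Extending by disjoint unions, $\Phi(\bigsqcup_k\Gamma_{X(H_k)})=\bigsqcup_k\Gamma_{Y(\phi(H_k))}$, gives a bijection on objects, with inverse induced by $\phi^{-1}$.

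Second, on morphisms. A morphism is a label-preserving partial isomorphism $Q\hookleftarrow P\hookrightarrow Q'$, where $P$ is a common full subquiver determined by its vertex set. We send it to $\Phi(Q)\hookleftarrow\Phi(P)\hookrightarrow\Phi(Q')$. Because $\phi$ is a bijection of labels compatible with inclusion of vertex sets, this is well defined and bijective on hom-sets. It preserves composition, since composition is intersection of the middle terms and $\phi$ commutes with intersections; it preserves identities and disjoint unions componentwise.

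The main obstacle is making sure the relabelling is compatible with labels. Morphisms are label preserving, and both sides use the same label convention through $\phi$, so a partial isomorphism in $\mathcal{S}_X$ with $i$ in its image corresponds to one in $\mathcal{S}_Y$ with $i'$. We must also check that full subquivers on a given vertex set of $\Gamma_X$ and $\Gamma_Y$ match up to orientation: arrows between vertices of $J$ are unchanged by Theorem~\ref{thm:flips_quivers}, and arrows at $i$ are reversed. Since morphisms are inclusions of full subquivers, and orientation enters neither existence nor composition of such inclusions, $\Phi$ is an isomorphism of categories. We note that $\Phi$ will not preserve the proto-exact structures of Definition~\ref{def:admissible_in_S_X}, since the poset of Definition~\ref{def:poset} changes.
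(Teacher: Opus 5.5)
Your proposal is correct and is essentially the paper's argument. The relabelling $j\mapsto j$ for $j\neq i$, $i\mapsto i'$ induces a bijection between objects (finite families of subsets of $I_X$, resp.\ $I_Y$). Since morphisms are label-preserving partial isomorphisms determined by intersections of vertex sets, this bijection extends to an isomorphism of categories.

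Your extra use of Theorem~\ref{thm:flips_quivers}, to see that corresponding full subquivers agree up to reversing the arrows at $i$, only makes explicit a point the paper leaves implicit. Your closing remark that $\Phi$ does not preserve the proto-exact structures is not needed for the proof, and it is only accurate when $i$ has at least one neighbour in $\Gamma_X$.
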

\begin{proof}
    Objects of $\mathcal{S}_X$ and $\mathcal{S}_Y$ are parameterized by $P(I_X)^{\mathbb{N}}$ and $P(I_Y)^{\mathbb{N}}$. Here $P(I_X)$ (respectively, $P(I_Y)$) is the power set of $I_X$ (respectively, $I_Y$). The bijection \begin{align}
        j\to \begin{cases}
            j & \text{ if } j\neq i \nonumber \\
            i'& \text{ if } j=i       \nonumber    
        \end{cases}
    \end{align}
    induces a bijection from $\operatorname{Ob}(\mathcal{S}_X)$ to $\operatorname{Ob}(\mathcal{S}_Y)$. Morphisms in the subquiver category are given by label preserving partial isomorphisms of quivers, which corresponds to intersection of the vertex sets. Thus this bijection of quivers extends to an isomorphism of category.
\end{proof}
Given an
object $X=(W,Q,\pi,I)\in \mathscr{D}_{fin}$ such that $\pi = w_0$ and $\Gamma_X$ is a tree, we have an isomorphism of Hall algebras of $\mathcal{S}_X$ and $Rep(Q, \mathbb{F}_1)$ by Theorem~\ref{thm:iso_hall_algebras}. Although changing the orientation does not change the category up to isomorphism, different orientations induce different proto-exact structures. We study how changing the orientation of a quiver affects the Hall algebra using flips in the subword complex. Any special vertex $i$ gives a flip with respect to the flat $\{i,i'\}$. When we consider the Hall algebra, we have isomorphic algebras.

\begin{corollary}\label{coro 3.30}
    If $\Gamma_X$ is of tree type, for any special vertex $i$ of $\Gamma_X$, the flip $X\overset{\{i,i'\}}{\to} Y$ induces an isomorphism of algebras from $\mathcal{H}(\mathcal{S}_X)$ to $\mathcal{H}(\mathcal{S}_Y)$.
\end{corollary}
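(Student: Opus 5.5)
The plan is to factor the isomorphism through the $\mathbb{F}_1$ Hall algebras, using Theorem~\ref{thm:iso_hall_algebras} on both sides. By Theorem~\ref{thm:flips_quivers}, $\Gamma_Y$ is obtained from $\Gamma_X$ by reversing all arrows at $i$ and relabelling $i$ as $i'$. Reversing arrows does not change the underlying graph, so $\Gamma_Y$ is again of tree type. Theorem~\ref{thm:iso_hall_algebras} then gives isomorphisms
\[
\mathcal{H}(\mathcal{S}_X)\cong \mathbf{H}(\Gamma_X), \qquad \mathcal{H}(\mathcal{S}_Y)\cong \mathbf{H}(\Gamma_Y).
\]
It therefore suffices to produce an algebra isomorphism $\mathbf{H}(\Gamma_X)\cong\mathbf{H}(\Gamma_Y)$, where $\Gamma_Y$ is the reflection of $\Gamma_X$ at $i$.

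For the middle isomorphism I would not use reflection functors, which behave badly over $\mathbb{F}_1$. Instead I would use the structure of these Hall algebras for trees. By Theorem~\ref{thm:univ_enveloping} together with the results of \cite{quiverf1} (Section 6), $\mathbf{H}(Q)$ for a tree $Q$ is a graded connected cocommutative Hopf algebra. Its comultiplication comes from direct sums, and its primitive elements are spanned by the classes of indecomposables, i.e.\ by the connected subquivers of $Q$ (Theorem~\ref{thm:indec_reps}). By Milnor--Moore it is $U(\mathfrak{n}_Q)$, where $\mathfrak{n}_Q$ is the Lie algebra with basis the connected subquivers. In \cite{quiverf1} the bracket is given explicitly: $[\Gamma_1,\Gamma_2]=\pm(\Gamma_1\cup\Gamma_2)$ when the two subquivers are disjoint and joined by an arrow, with sign determined by the orientation of that arrow, and $0$ otherwise. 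The map I would take sends a connected subquiver $\Gamma$ to $\epsilon(\Gamma)\,\Gamma$, with $\Gamma$ relabelled. Here $\epsilon(\Gamma)=-1$ if $i\in\Gamma$ and $+1$ otherwise, or some similar sign twist. I would check that this map intertwines the brackets for the two orientations. Since only arrows at $i$ change direction, only brackets in which the joining arrow is incident to $i$ change sign, and the twist is designed to absorb exactly that sign. The Lie isomorphism then extends to $U(\mathfrak{n}_{\Gamma_X})\cong U(\mathfrak{n}_{\Gamma_Y})$.

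An alternative, cruder route, if a bare algebra isomorphism is all that is required, is to avoid $\mathcal{S}$ and go through the Hall algebras of the two orientations directly. Both are isomorphic to $U$ of Lie algebras of the same dimension with brackets differing by signs. The above twist is the concrete way to see this.

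The main obstacle is verifying the sign twist. A single sign depending only on whether $i\in\Gamma$ may fail when $i$ lies in neither $\Gamma_1$ nor $\Gamma_2$, but the arrow joining them is at $i$. That case cannot occur, because the joining arrow has endpoints in $\Gamma_1$ and $\Gamma_2$. The remaining case is that $i$ is the endpoint in $\Gamma_1$, say, and the check is that $\epsilon(\Gamma_1)\epsilon(\Gamma_2)=\epsilon(\Gamma_1\cup\Gamma_2)\cdot(-1)$. As written this fails, since both sides equal $-1$ before the extra factor. I would therefore adjust the twist, for instance to $\epsilon(\Gamma)=(-1)^{\#\{\text{arrows of }\Gamma\text{ at }i\}}$, which multiplies correctly because arrows of $\Gamma_1\cup\Gamma_2$ at $i$ are those of $\Gamma_1$, those of $\Gamma_2$, and the joining arrow. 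If sign conventions in \cite{quiverf1} make even this delicate, I would fall back on the description of such automorphisms in \cite{JunSistko23}.
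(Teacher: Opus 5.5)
Your proposal is correct and follows essentially the same route as the paper. Both arguments pass through Theorem~\ref{thm:iso_hall_algebras}, apply Milnor--Moore with primitives given by the indecomposables (connected subquivers), and extend a Lie algebra isomorphism $\mathfrak{n}_{\Gamma_X}\cong\mathfrak{n}_{\Gamma_Y}$ to the enveloping algebras. The one difference is that the paper cites the proof of \cite[Proposition 6.3]{JunSistko} for the Lie isomorphism, while you construct it explicitly. Your corrected twist $\epsilon(\Gamma)=(-1)^{\#\{\text{arrows of }\Gamma\text{ at } i\}}$ works, unlike the first one you tried: in a tree, the arrows of $\Gamma_1\cup\Gamma_2$ are exactly those of $\Gamma_1$, those of $\Gamma_2$, and the unique joining arrow, and the orientation of that joining arrow is what fixes the sign of the bracket.
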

\begin{proof}
By Remark~\ref{rem:graded_co_comm_Hopf_S_X} and the isomorphism given in Theorem~\ref{thm:flips_quivers}, $\mathcal{H}(\mathcal{S}_X)$ is thus a graded, connected, and co-commutative Hopf algebra. 
By the Milnor--Moore theorem, it is the universal enveloping algebra 
of the graded Lie algebra $\mathfrak{n}_{\Gamma_X}$ of its primitive elements.\par
It follows from the definition of the coproduct that 
$[M] \in \mathcal{H}(\mathcal{S}_X)$ is primitive iff $M$ is given by an indecomposable representation of $\Gamma_X$ via the isomorphism given in Theorem~\ref{thm:iso_hall_algebras}.\par
Using the proof of \cite[Proposition 6.3]{JunSistko}, the flip $X\overset{\{i,i'\}}{\to} Y$ induces an isomorphism of Lie algebras between $\mathfrak{n}_{\Gamma_X}$ and $\mathfrak{n}_{\Gamma_Y}$. This induces an isomorphism between Hall algebras $\mathcal{H}(\mathcal{S}_X)$ and $\mathcal{H}(\mathcal{S}_Y)$.

\end{proof}

\subsection{Type A}

In this section, we apply the results above to the case $A_n$, in which we can give a realization of the universal enveloping algebra $U(\mathfrak{n}_+)$. 


\begin{corollary} \label{cor:type_A_iso}
    For an object $X\in \mathscr{D}$ such that $W_X$ is the Weyl group of type $A_n$ and $\Gamma_X$ is the Dynkin quiver of type $A_n$, we have an isomorphism of algebra $\psi_X: \mathcal{H}(\mathcal{S}_X)\to U(\mathfrak{n}_+)$.
\end{corollary}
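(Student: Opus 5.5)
The plan is to compose two isomorphisms that are already in place. The first is the isomorphism of Theorem~\ref{thm:iso_hall_algebras}. The second is Szczesny's isomorphism from Theorem~\ref{thm:univ_enveloping}.

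\textbf{Step 1.} Since $\Gamma_X$ is the Dynkin quiver of type $A_n$, its underlying graph is a path, hence a tree. Theorem~\ref{thm:iso_hall_algebras} therefore gives an algebra isomorphism $\Phi_X:\mathcal{H}(\mathcal{S}_X)\to \mathbf{H}(\Gamma_X)$. On basis elements it is induced by the bijection $\Psi$: a disjoint union $\bigsqcup_i\Gamma_i$ of connected subquivers goes to $\bigoplus_i\Psi(\Gamma_i)$, where $\Psi(\Gamma_i)$ is the indecomposable $\mathbb{F}_1$-representation supported on $\Gamma_i$.

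\textbf{Step 2.} By Theorem~\ref{thm:univ_enveloping}, for $Q=\Gamma_X$ of type $A_n$ the Hopf algebra homomorphism $\rho_{\Gamma_X}:U(\mathfrak{n}_+)\to\mathbf{H}(\Gamma_X)$ is an isomorphism. Here $\mathfrak{n}_+$ is the positive nilpotent part of the Kac--Moody algebra of $\Gamma_X$, which is $\mathfrak{sl}_{n+1}$.

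\textbf{Step 3.} Set $\psi_X:=\rho_{\Gamma_X}^{-1}\circ\Phi_X$. This is a composition of algebra isomorphisms, so it is an algebra isomorphism $\mathcal{H}(\mathcal{S}_X)\to U(\mathfrak{n}_+)$. For concreteness, I would also record what $\psi_X$ does to generators: it sends the class of the one-vertex quiver at $v\in I_X$ to the Chevalley generator $e_v$. More generally, it sends the class of a connected subquiver (an interval of the $A_n$ path) to a root vector, up to normalisation. This follows from the description of primitive elements used in the proof of Corollary~\ref{coro 3.30}.

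\textbf{Main obstacle.} The only point that needs care is making sure the Kac--Moody algebra attached to $\Gamma_X$ is really the type $A_n$ one, that is, that the underlying graph of $\Gamma_X$ is the $A_n$ Dynkin diagram. This is exactly the hypothesis. I would also check that the Hall algebra of $\mathcal{S}_X$ used here is the full one and not a composition subalgebra. Theorem~\ref{thm:iso_hall_algebras} concerns $\mathbf{H}(\Gamma_X)$ itself, and in type $A_n$ the map $\rho$ is already onto it, so nothing further is required. If one also wants $\psi_X$ to respect coproducts, one would compare the disjoint-union coproduct of Remark~\ref{rem:graded_co_comm_Hopf_S_X} with the direct-sum coproduct on $\mathbf{H}(\Gamma_X)$, which match under $\Psi$. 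The statement, however, only asks for an algebra isomorphism.
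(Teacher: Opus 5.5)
Your proposal is correct and follows the paper's own proof. The paper likewise composes the isomorphism $\mathcal{H}(\mathcal{S}_X)\cong\mathbf{H}(\Gamma_X)$ of Theorem~\ref{thm:iso_hall_algebras}, which applies because the $A_n$ path is a tree, with the type~$A_n$ isomorphism $\rho_{\Gamma_X}$ of Theorem~\ref{thm:univ_enveloping}; your extra remarks on where the generators go are not needed for the statement.
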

\begin{proof}
    By Theorem~\ref{thm:iso_hall_algebras}, the Hall algebra $\mathcal{H}(\mathcal{S}_X)$ is isomorphic to the Hall algebra $\mathbf{H}(\Gamma_X)$. By Theorem~\ref{thm:univ_enveloping}, we have the desired isomorphism.
\end{proof}

Corollary~\ref{cor:type_A_iso} implies that we can get an integral basis for $U(\mathfrak{n}_+)$ using direct sum of subobjects of such an $X$. By construction, the product of two elements of this basis is a linear combination of other basis elements with positive integer coefficients. In this way, we construct an explicit realization of $U(\mathfrak{n}_+)$ in type $A_n$.

\begin{definition}\label{def:standard object}
    Fix $n\in \mathbb{N}$, we choose the following object $X$:
    \begin{enumerate}
        \item $W_X\simeq S_{n+1}$;
        \item The word $Q_X=s_1\cdots s_ns_1\cdots s_ns_1\cdots s_{n-1}\cdots s_1$;
        \item $I_X=[n]$;
        \item $\pi_X=w_0$ is the longest element in $W_X$.
    \end{enumerate}
\end{definition}

 We call it the \textbf{standard object} of type $A_n$. The root configuration quiver associated to $X$ is $\Gamma_X=1\rightarrow \cdots \rightarrow n$. Now we fix a standard object $X$ of type $A_n$. 

\begin{example} \label{ex:standard_A_3}
    Take $n=3$. The standard $X$ of type $A_3$ is of the form \begin{align}
        X=(S_4, s_1s_2s_3s_1s_2s_3s_1s_2s_1, s_1s_2s_3s_1s_2s_1, \{1,2,3\}). \nonumber
    \end{align}
    The indecomposable objects of $\mathcal{S}_X$ and certain multiplication rules are listed as follows:
\begin{align*}
\{2\} \cdot \{1\} &= \{1\} + \{2\} + \{1,2\}, & \{1\} \cdot\{2\} &= \{1\} + \{2\}, \\[4pt]
\{3\} \cdot \{2\} &= \{2\} + \{3\} + \{2,3\}, & \{2\} \cdot \{3\} &= \{2\} + \{3\}, \\[4pt]
\{1\} \cdot \{3\} &= \{3\} \cdot \{1\}=\{1\} + \{3\}\\[4pt]
\{3\} \cdot \{1,2\} &= \{1,2\} + \{3\} + \{1,2,3\}, &
\{1,2\} \cdot \{3\} &= \{1,2\} + \{3\}, \\[4pt]
\{1\} \cdot \{2,3\} &= \{1\} + \{2,3\} + \{1,2,3\}, &
\{2,3\} \cdot \{1\} &= \{1\} + \{2,3\}.
\end{align*}
Here, we use a set $J$ in $\{\{1\},\{2\},\{3\},\{1,2\},\{2,3\},\{1,2,3\}\}$ to represent the corresponding subquiver $\Gamma_{X(J)}$ of $\Gamma_X$.
\par
Using notations in \cite{Zijun},
the canonical gallery $g_X$ of $X$ is given by \begin{align}
    g_X=([-\alpha_1,-\alpha_2, -\alpha_3, \alpha_1 ,\alpha_1+\alpha_2, \alpha_1+\alpha_2+\alpha_3, \alpha_2, \alpha_2+\alpha_3,\alpha_3],\{1,2,3\}). \nonumber
\end{align}
Any set of simple roots determines a subobject $Z$ of $X$, where $W_{Z}$ is the parabolic subgroup of $W_X$ generated by simple reflections corresponding to simple roots in this set. \par 

For example, if we take the set $\{\alpha_2,\alpha_3\}$, then the projection of $g_X$ to the vector space spanned by $\{\alpha_2,\alpha_3\}$ yields a folded gallery \begin{align}
    g=([-\alpha_2, -\alpha_3, \alpha_2, \alpha_2+\alpha_3, \alpha_3],\{1,2\}). \nonumber
\end{align}
This gallery is the canonical gallery of $Z$.\par

The vertex $3$ is a special vertex of $\Gamma_X$. Notice that $s_3s_1s_2s_3s_1s_2$ is a reduced expression of $\pi_X$. We have $\{3,9\}$ is a flippable flat of $X$ and we have \begin{align}
    X \overset{\{3,9\}}{\to} Y=(S_4, s_1s_2s_3s_1s_2s_3s_1s_2s_1, s_3s_1s_2s_3s_1s_2, \{1,2,9\}). \nonumber
\end{align} 
The root configuration quiver $\Gamma_Y$ is $1\rightarrow 2\leftarrow 9$. \par

The indecomposable objects of $\mathcal{S}_Y$ and certain multiplication rules are listed as follows:
\begin{align*}
\{2\} \cdot \{1\} &= \{1\} + \{2\} + \{1,2\}, & \{1\} \cdot \{2\} &= \{1\} + \{2\}, \\[4pt]
\{9\} \cdot \{2\} &= \{2\} + \{9\}, & \{2\} \cdot \{9\} &= \{2\} + \{9\}+\{2,9\}, \\[4pt]
\{1\} \cdot \{9\} &= \{9\} \cdot \{1\}=\{1\} + \{9\}\\[4pt]
\{9\} \cdot \{1,2\} &= \{1,2\} + \{9\}, &
\{1,2\} \cdot \{9\} &= \{1,2\} + \{9\}+\{1,2,9\}, \\[4pt]
\{1\} \cdot \{2,9\} &= \{1\} + \{2,9\}, &
\{2,9\} \cdot \{1\} &= \{1\} + \{2,9\}+\{1,2,9\}.
\end{align*}
Here, we use a subset $J$ of $\{1,2,9\}$ to represent the corresponding subquiver $\Gamma_{Y(J)}$ of $\Gamma_Y$. Subobjects of $Y$ gives another realization of $U(\mathfrak{n_+)}$. \par

\end{example}

\subsection{Examples of non-linear orientations}

In this section we show more examples. We use notation analogous to that in Example~\ref{ex:standard_A_3}.

\begin{example} \label{example D_4 standard}
    Consider the Coxeter group $W(D_4) \cong (\mathbb{Z}_2)^3 \rtimes S_4$ of type $D_4$ with the Coxeter graph 
   \xymatrix{
 & s_3 \\
s_1 \ar@{-}[r] & s_2 \ar@{-}[u] \ar@{-}[r] & s_4.
}\par
      Take an object $X$ of the form \begin{align}
        X=(W(D_4), s_1s_2s_3s_3s_4s_1, s_3s_1, \{1,2,4,5\}). \nonumber
    \end{align}
    The root function of $X$ is given by \begin{align}
       & \rx_X(1)=\alpha_1, \rx_X(2)=\alpha_2, \rx_X(3)=\alpha_3, \nonumber \\
        & \rx_X(4)=-\alpha_3, \rx_X(5)=\alpha_4, \nonumber \\
        & \rx_X(6)=\alpha_1\nonumber
    \end{align}
The root configuration quiver $\Gamma_X$ is \xymatrix{
 & 4 \ar [d] \\
1 \ar[r] & 2  \ar[r] & 5.
}\par

Now $1$ is a special vertex of $X$. If we flip $X$ to $Y=(W(D_4), s_1s_2s_3s_3s_4s_1, s_3s_1, \{2,4,5,6\})$, the root function of $Y$ is
\begin{align}
       & \rx_Y(1)=\alpha_1, \rx_Y(2)=\alpha_1+\alpha_2, \rx_Y(3)=\alpha_3, \nonumber \\
        & \rx_Y(4)=-\alpha_3, \rx_Y(5)=\alpha_4, \nonumber \\
        & \rx_Y(6)=-\alpha_1\nonumber
    \end{align}
In this case, the root configuration quiver $\Gamma_Y$ is  \xymatrix{
 & 4 \ar [d] \\
6 & 2 \ar[l] \ar[r] & 5.
}\par
\end{example}
In general, the underlying graph of the root configuration quiver can be different from the Coxeter graph.


\begin{example}\label{example D_4}
    Take an object $X$ of the form \begin{align}
        X=(W(D_4), s_1s_2s_2s_3s_4s_1, s_2s_1, \{1,2,4,5\}). \nonumber
    \end{align}
    The root function of $X$ is given by \begin{align}
       & \rx_X(1)=\alpha_1, \rx_X(2)=\alpha_2, \rx_X(3)=\alpha_2, \nonumber \\
        & \rx_X(4)=\alpha_2+\alpha_3, \rx_X(5)=\alpha_2+\alpha_4, \nonumber \\
        & \rx_X(6)=\alpha_1+\alpha_2\nonumber
    \end{align}
The root configuration quiver $\Gamma_X$ is \xymatrix{
 & 4 \ar [d] \\
1 \ar[r] \ar[dr] \ar[ur] & 2  \ar[d] \\
& 5.
}\par
\end{example}
Here is an example for an infinite Coxeter group. 
\begin{example}\label{example A_2 inf}
    Consider the affine Coxeter group $W(\widetilde{A}_2)$ of type $\widetilde{A}_2$ with Coxeter graph
\[
\xymatrix@C=3em@R=2em{
& s_2 \ar@{-}[dl] \ar@{-}[dr] & \\
s_1 \ar@{-}[rr] && s_3.
}
\]
Take an object $X$ of the form \begin{align}
        X=(W(\widetilde{A}_2), s_1s_2s_2s_3s_1s_2, s_2s_1s_2, \{1,3,4\}). \nonumber
    \end{align}
    We use the Tits root system to construct a three-dimensional vector space $V_X=\mathbb{R}\alpha_1\oplus \mathbb{R}\alpha_2 \oplus \mathbb{R}\alpha_3$.
    The root function of $X$ is given by \begin{align}
       & \rx_X(1)=\alpha_1, \rx_X(2)=\alpha_2, \rx_X(3)=-\alpha_2, \nonumber \\
        & \rx_X(4)=\alpha_2+\alpha_3, \rx_X(5)=\alpha_1+\alpha_2, \nonumber \\
        & \rx_X(6)=\alpha_1\nonumber
    \end{align}
The root configuration quiver $\Gamma_X$ is \xymatrix{
 & 4  \\
1 \ar[ur] & & 3.  \ar[ul] \ar[ll] 
}\par

\end{example}

\section*{Acknowledgements}
We are very grateful to Tobias Dyckerhoff, Stéphane Gaussent, Paul Philippe, and Petra Schwer for many useful discussions. Z.Li thanks Stéphane Gaussent for supervision and Petra Schwer for advising the study of subword complexes. M.Gorsky acknowledges support by the Deutsche Forschungsgemeinschaft SFB 1624 ``Higher structures, moduli spaces and integrability'' (506632645). Z.Li acknowledges support from the Procope project ``Buildings, galleries and beyond'' and the PHD funding from Institut Camille Jordan, Université Jean Monnet.

\bibliographystyle{alpha}
\bibliography{references, biblio} 

@article{casals_gao,
    AUTHOR = {Casals, Roger and Gao, Honghao},
     TITLE = {A {L}agrangian filling for every cluster seed},
   JOURNAL = {Invent. Math.},
  FJOURNAL = {Inventiones Mathematicae},
    VOLUME = {237},
      YEAR = {2024},
    NUMBER = {2},
     PAGES = {809--868},
      ISSN = {0020-9910,1432-1297},
   MRCLASS = {53D12 (13F60 57K33)},
  MRNUMBER = {4768635},
       DOI = {10.1007/s00222-024-01268-y},
       URL = {https://doi.org/10.1007/s00222-024-01268-y},
}

@article{G_subword_3,
  title={Subword Complexes and Nil-{H}ecke Moves},
  author={Gorsky, Mikhail},
  journal={Modeling and Analysis of Information Systems},
  volume={20},
  number={6},
  pages={121--128},
  year={2013}
}

@article{gorsky_edge,
    AUTHOR = {Gorsky, Mikhail},
     TITLE = {Subword complexes and edge subdivisions},
      NOTE = {Published in Russian in Tr. Mat. Inst. Steklova {\bf 286}
              (2014), 129--143},
   JOURNAL = {Proc. Steklov Inst. Math.},
  FJOURNAL = {Proceedings of the Steklov Institute of Mathematics},
    VOLUME = {286},
      YEAR = {2014},
    NUMBER = {1},
     PAGES = {114--127},
      ISSN = {0081-5438,1531-8605},
   MRCLASS = {20F55 (05E15 05E45)},
  MRNUMBER = {3482594},
MRREVIEWER = {T.\ Kyle\ Petersen},
       DOI = {10.1134/S0081543814060078},
       URL = {https://doi.org/10.1134/S0081543814060078},
}

@article{CGGLSS,
    AUTHOR = {Casals, Roger and Gorsky, Eugene and Gorsky, Mikhail and Le,
              Ian and Shen, Linhui and Simental, Jos\'e},
     TITLE = {Cluster structures on braid varieties},
   JOURNAL = {J. Amer. Math. Soc.},
  FJOURNAL = {Journal of the American Mathematical Society},
    VOLUME = {38},
      YEAR = {2025},
    NUMBER = {2},
     PAGES = {369--479},
      ISSN = {0894-0347,1088-6834},
   MRCLASS = {13F60 (14M15 20F36)},
  MRNUMBER = {4868947},
       DOI = {10.1090/jams/1048},
       URL = {https://doi.org/10.1090/jams/1048},
}

@article{CGGS2,
      title={Positroid Links and Braid varieties}, 
      author={Roger Casals and Eugene Gorsky and Mikhail Gorsky and José Simental},
      year={to appear, 2026.},
      url={https://arxiv.org/abs/2105.13948v4},
      journal = {J. reine angew. Math. (Crelle's Journal)},
}

@article{pilaud_santos_12,
    AUTHOR = {Pilaud, Vincent and Santos, Francisco},
     TITLE = {The brick polytope of a sorting network},
   JOURNAL = {European J. Combin.},
  FJOURNAL = {European Journal of Combinatorics},
    VOLUME = {33},
      YEAR = {2012},
    NUMBER = {4},
     PAGES = {632--662},
      ISSN = {0195-6698,1095-9971},
   MRCLASS = {52B12 (05C75 52C30)},
  MRNUMBER = {2864447},
       DOI = {10.1016/j.ejc.2011.12.003},
       URL = {https://doi.org/10.1016/j.ejc.2011.12.003},
}

@article{pilaud_stump_15,
    AUTHOR = {Pilaud, Vincent and Stump, Christian},
     TITLE = {Brick polytopes of spherical subword complexes and generalized
              associahedra},
   JOURNAL = {Adv. Math.},
  FJOURNAL = {Advances in Mathematics},
    VOLUME = {276},
      YEAR = {2015},
     PAGES = {1--61},
      ISSN = {0001-8708,1090-2082},
   MRCLASS = {05E45 (05E15 05E30 20F55 52B12)},
  MRNUMBER = {3327085},
MRREVIEWER = {Jian-yi\ Shi},
       DOI = {10.1016/j.aim.2015.02.012},
       URL = {https://doi.org/10.1016/j.aim.2015.02.012},
}

@article{cls,
    AUTHOR = {Ceballos, Cesar and Labb\'e, Jean-Philippe and Stump,
              Christian},
     TITLE = {Subword complexes, cluster complexes, and generalized
              multi-associahedra},
   JOURNAL = {J. Algebraic Combin.},
  FJOURNAL = {Journal of Algebraic Combinatorics. An International Journal},
    VOLUME = {39},
      YEAR = {2014},
    NUMBER = {1},
     PAGES = {17--51},
      ISSN = {0925-9899,1572-9192},
   MRCLASS = {05E45 (52B05)},
  MRNUMBER = {3144391},
MRREVIEWER = {Micha\l\ Adamaszek},
       DOI = {10.1007/s10801-013-0437-x},
       URL = {https://doi.org/10.1007/s10801-013-0437-x},
}

@article{escobar,
    AUTHOR = {Escobar, Laura},
     TITLE = {Brick manifolds and toric varieties of brick polytopes},
   JOURNAL = {Electron. J. Combin.},
  FJOURNAL = {Electronic Journal of Combinatorics},
    VOLUME = {23},
      YEAR = {2016},
    NUMBER = {2},
     PAGES = {Paper 2.25, 18},
      ISSN = {1077-8926},
   MRCLASS = {14M15 (05E99 14M25)},
  MRNUMBER = {3512647},
MRREVIEWER = {Justin\ Brown},
       DOI = {10.37236/5038},
       URL = {https://doi.org/10.37236/5038},
}

@book {bb_coxeter,
    AUTHOR = {Bj\"{o}rner, Anders and Brenti, Francesco},
     TITLE = {Combinatorics of {C}oxeter groups},
    SERIES = {Graduate Texts in Mathematics},
    VOLUME = {231},
 PUBLISHER = {Springer, New York},
      YEAR = {2005},
     PAGES = {xiv+363},
      ISBN = {978-3540-442387},
   MRCLASS = {05-01 (05E15 20F55)},
  MRNUMBER = {2133266},
MRREVIEWER = {Jian-yi Shi},
}

@article{CKP,
 author = {{\c{C}}anak{\c{c}}{\i}, {\.I}lke and Kalck, Martin and Pressland, Matthew},
 title = {Cluster categories for completed infinity-gons. {I}: {Categorifying} triangulations},
 fjournal = {Journal of the London Mathematical Society. Second Series},
 journal = {J. Lond. Math. Soc., II. Ser.},
 issn = {0024-6107},
 volume = {111},
 number = {2},
 pages = {31},
 note = {Id/No e70092},
 year = {2025},
 language = {English},
 doi = {10.1112/jlms.70092},
 zbMATH = {7989661}, 
 Zbl = {1565.18021}
}

@article {NakaokaPalu1,
    AUTHOR = {Nakaoka, Hiroyuki and Palu, Yann},
     TITLE = {Extriangulated categories, {H}ovey twin cotorsion pairs and
              model structures},
   JOURNAL = {Cah. Topol. G\'{e}om. Diff\'{e}r. Cat\'{e}g.},
  FJOURNAL = {Cahiers de Topologie et G\'{e}om\'{e}trie Diff\'{e}rentielle
              Cat\'{e}goriques},
    VOLUME = {60},
      YEAR = {2019},
    NUMBER = {2},
     PAGES = {117--193},
      ISSN = {1245-530X,2681-2363},
   MRCLASS = {18E10 (18E30 18E35 18G15 18G55)},
  MRNUMBER = {3931945},
MRREVIEWER = {Panyue\ Zhou},
}

@article {NakaokaPalu2,
	AUTHOR = {Hiroyuki Nakaoka and Yann Palu},
	TITLE = {External triangulation of the homotopy category of an exact quasi-category},
	journal = {arXiv preprint arXiv:2004.02479}, 
	YEAR = {2020},
}

@incollection{schiffmann3,
 author = {Schiffmann, Olivier G.},
 title = {Kac polynomials and {Lie} algebras associated to quivers and curves},
 booktitle = {Proceedings of the international congress of mathematicians 2018, ICM 2018, Rio de Janeiro, Brazil, August 1--9, 2018. Volume II. Invited lectures},
 isbn = {978-981-3272-91-0; 978-981-327-287-3; 978-981-3272-89-7},
 pages = {1393--1424},
 year = {2018},
 publisher = {Hackensack, NJ: World Scientific; Rio de Janeiro: Sociedade Brasileira de Matem{\'a}tica (SBM)},
 language = {English},
 doi = {10.1142/9789813272880_0102},
 zbMATH = {7250525},
 Zbl = {1495.17035}
}

@incollection{schiffmann2,
 author = {Schiffmann, Olivier},
 title = {Lectures on canonical and crystal bases of {Hall} algebras},
 booktitle = {Geometric methods in representation theory. II. Selected papers based on the presentations at the summer school, Grenoble, France, June 16 -- July 4, 2008},
 isbn = {978-2-85629-361-4},
 pages = {143--259},
 year = {2012},
 publisher = {Paris: Soci{\'e}t{\'e} Math{\'e}matique de France},
 language = {English},
 zbMATH = {6308124},
 Zbl = {1356.17015}
}

@incollection{schiffmann1,
 author = {Schiffmann, Olivier},
 title = {Lectures on {Hall} algebras},
 booktitle = {Geometric methods in representation theory. II. Selected papers based on the presentations at the summer school, Grenoble, France, June 16 -- July 4, 2008},
 isbn = {978-2-85629-361-4},
 pages = {1--141},
 year = {2012},
 publisher = {Paris: Soci{\'e}t{\'e} Math{\'e}matique de France},
 language = {English},
 zbMATH = {6308123},
 Zbl = {1309.18012}
}

@article {R1,
    AUTHOR = {Ringel, Claus Michael},
     TITLE = {Hall algebras and quantum groups},
   JOURNAL = {Invent. Math.},
  FJOURNAL = {Inventiones Mathematicae},
    VOLUME = {101},
      YEAR = {1990},
    NUMBER = {3},
     PAGES = {583--591},
      ISSN = {0020-9910,1432-1297},
   MRCLASS = {16E60 (16G60 16P10 16S30 17B37)},
  MRNUMBER = {1062796},
MRREVIEWER = {Jie\ Du},
       DOI = {10.1007/BF01231516},
       URL = {https://doi.org/10.1007/BF01231516},
}

@article{trinh2021hecke,
  title={From the {H}ecke category to the unipotent locus},
  author={Trinh, Minh-T{\^a}m Quang},
  journal={arXiv preprint arXiv:2106.07444},
  year={2021}
}

@misc{CGGSSBS,
 author = {Casals, Roger and Galashin, Pavel and Gorsky, Mikhail and Shen, Linhui and Sherman-Bennett, Melissa and Simental, Jos{\'e}},
 title = {Comparing cluster algebras on braid varieties},
 year = {2025},
 howpublished = {Preprint, {arXiv}:2508.03816 [math.{AG}] (2025)},
 url = {https://arxiv.org/abs/2508.03816},
 arXiv = {arXiv:2508.03816}
}

@article{GLSB,
 author = {Galashin, Pavel and Lam, Thomas and Sherman-Bennett, Melissa},
 title = {Braid variety cluster structures. {II}: {General} type},
 fjournal = {Inventiones Mathematicae},
 journal = {Invent. Math.},
 issn = {0020-9910},
 volume = {243},
 number = {3},
 pages = {1079--1127},
 year = {2026},
 language = {English},
 doi = {10.1007/s00222-025-01390-5},
 zbMATH = {8162061}
}

@incollection {Hu,
    AUTHOR = {Hubery, Andrew},
     TITLE = {From triangulated categories to {L}ie algebras: a theorem of
              {P}eng and {X}iao},
 BOOKTITLE = {Trends in representation theory of algebras and related
              topics},
    SERIES = {Contemp. Math.},
    VOLUME = {406},
     PAGES = {51--66},
 PUBLISHER = {Amer. Math. Soc., Providence, RI},
      YEAR = {2006},
      ISBN = {0-8218-3818-0},
   MRCLASS = {16G20 (17B65 18E30)},
  MRNUMBER = {2258041},
       DOI = {10.1090/conm/406/07653},
       URL = {https://doi.org/10.1090/conm/406/07653},
}

@article{BBGH,
 author = {Baillargeon, Rose-Line and Br{\"u}stle, Thomas and Gorsky, Mikhail and Hassoun, Souheila},
 title = {On the lattices of exact and weakly exact structures},
 fjournal = {Journal of Algebra},
 journal = {J. Algebra},
 issn = {0021-8693},
 volume = {612},
 pages = {77--109},
 year = {2022},
 language = {English},
 doi = {10.1016/j.jalgebra.2022.07.043},
 zbMATH = {7596227},
 Zbl = {1505.18017}
}

@misc{Zijun,
  author = {Li, Zijun},
  title = {A path model for {MV} polytopes in type ${A}_n$},
  year = {2026},
  howpublished = {Preprint, {arXiv}:2603.28634 [math.{RT}] (2026)}, 
  url = {https://arxiv.org/abs/2603.28634},
  arXiv = {arXiv:2603.28634}
}

@misc{CY,
 author = {Cooper, Benjamin and Young, Matthew B.},
 title = {Hall algebras via 2-{Segal} spaces},
 year = {2024},
 howpublished = {Preprint, {arXiv}:2409.19384 [math.{CT}] (2024)},
 url = {https://arxiv.org/abs/2409.19384},
 arXiv = {arXiv:2409.19384}
}

@misc{GCKT,
 author = {G{\'a}lvez-Carrillo, Imma and Kock, Joachim and Tonks, Andrew},
 title = {Decomposition spaces in {Combinatorics}},
 year = {2016},
 howpublished = {Preprint, {arXiv}:1612.09225 [math.{CO}] (2016)},
 url = {https://arxiv.org/abs/1612.09225},
 arXiv = {arXiv:1612.09225}
}

@article{FRY,
 author = {Fu, Changjian and Ran, Longjun and Yang, Liang},
 title = {On homological properties of the category of {{\(\mathbb{F}_1\)}}-representations over a linear quiver of type {{\(\mathbb{A}_n\)}}},
 fjournal = {Journal of Algebra},
 journal = {J. Algebra},
 issn = {0021-8693},
 volume = {646},
 pages = {1--16},
 year = {2024},
 language = {English},
 doi = {10.1016/j.jalgebra.2024.01.034},
 zbMATH = {7818589},
 Zbl = {1544.16014}
}

@article{FangGorsky,
  title={Exact structures and degeneration of {H}all algebras},
  author={Fang, Xin and Gorsky, Mikhail},
  journal={Advances in Mathematics},
  volume={398},
  pages={108210},
  year={2022},
  publisher={Elsevier}
}

@misc{DHM,
 author = {Davis, James F. and Hersh, Patricia and Miller, Ezra},
 title = {Fibers of maps to totally nonnegative spaces},
 year = {2019},
 howpublished = {Preprint, {arXiv}:1903.01420 [math.{CO}] (2019)},
 url = {https://arxiv.org/abs/1903.01420},
 arXiv = {arXiv:1903.01420}
}

@article{JahnStump,
 author = {Jahn, Dennis and Stump, Christian},
 title = {Bruhat intervals, subword complexes and brick polyhedra for finite {Coxeter} groups},
 fjournal = {Mathematische Zeitschrift},
 journal = {Math. Z.},
 issn = {0025-5874},
 volume = {304},
 number = {2},
 pages = {32},
 note = {Id/No 24},
 year = {2023},
 language = {English},
 doi = {10.1007/s00209-023-03267-w},
 zbMATH = {7684548},
 Zbl = {1518.05209}
}

@article{JunSistko23,
 author = {Jun, Jaiung and Sistko, Alexander},
 title = {On quiver representations over {{\(\mathbb{F}_1\)}}},
 fjournal = {Algebras and Representation Theory},
 journal = {Algebr. Represent. Theory},
 issn = {1386-923X},
 volume = {26},
 number = {1},
 pages = {207--240},
 year = {2023},
 language = {English},
 doi = {10.1007/s10468-021-10092-4},
 zbMATH = {7659756},
 Zbl = {1525.16011}
}

@article{JunSistko,
 author = {Jun, Jaiung and Sistko, Alexander},
 title = {Coefficient quivers, {{\(\mathbb{F}_1\)}}-representations, and {Euler} characteristics of quiver {Grassmannians}},
 fjournal = {Nagoya Mathematical Journal},
 journal = {Nagoya Math. J.},
 issn = {0027-7630},
 volume = {255},
 pages = {561--617},
 year = {2024},
 language = {English},
 doi = {10.1017/nmj.2023.37},
 zbMATH = {7927417},
 Zbl = {1561.16014}
}

@article{KM2005,
 author = {Knutson, Allen and Miller, Ezra},
 title = {Gr{\"o}bner geometry of {Schubert} polynomials},
 fjournal = {Annals of Mathematics. Second Series},
 journal = {Ann. Math. (2)},
 issn = {0003-486X},
 volume = {161},
 number = {3},
 pages = {1245--1318},
 year = {2005},
 language = {English},
 doi = {10.4007/annals.2005.161.1245},
 zbMATH = {5004653},
 Zbl = {1089.14007}
}

@book{AbramenkoBrown2008,
  author    = {Abramenko, Peter and Brown, Kenneth S.},
  title     = {Buildings: Theory and Applications},
  series    = {Graduate Texts in Mathematics},
  volume    = {248},
  publisher = {Springer},
  address   = {New York},
  year      = {2008},
  isbn      = {978-0-387-78834-0},
  doi       = {10.1007/978-0-387-78835-7}
}

@article{Schwer,
 author = {Schwer, Petra},
 title = {Shadows in the wild -- folded galleries and their applications},
 fjournal = {Jahresbericht der Deutschen Mathematiker-Vereinigung (DMV)},
 journal = {Jahresber. Dtsch. Math.-Ver.},
 issn = {0012-0456},
 volume = {124},
 number = {1},
 pages = {3--41},
 year = {2022},
 language = {English},
 doi = {10.1365/s13291-021-00244-2},
 zbMATH = {7501304},
 Zbl = {1551.20064}
}

@article{ELY,
 author = {Eberhardt, Jens Niklas and Lorscheid, Oliver and Young, Matthew B.},
 title = {Group completion in the {{\(K\)}}-theory and {Grothendieck}-{Witt} theory of proto-exact categories},
 fjournal = {Journal of Pure and Applied Algebra},
 journal = {J. Pure Appl. Algebra},
 issn = {0022-4049},
 volume = {226},
 number = {8},
 pages = {32},
 note = {Id/No 107018},
 year = {2022},
 language = {English},
 doi = {10.1016/j.jpaa.2022.107018},
 zbMATH = {7517045},
 Zbl = {1524.19003}
}

@article{BerGr,
 author = {Berenstein, Arkady and Greenstein, Jacob},
 title = {Primitively generated {Hall} algebras.},
 fjournal = {Pacific Journal of Mathematics},
 journal = {Pac. J. Math.},
 issn = {1945-5844},
 volume = {281},
 number = {2},
 pages = {287--331},
 year = {2016},
 language = {English},
 doi = {10.2140/pjm.2016.281.287},
 zbMATH = {6545112},
 Zbl = {1338.16016}
}

@book{DK,
 author = {Dyckerhoff, Tobias and Kapranov, Mikhail},
 title = {Higher {S}egal spaces},
 fseries = {Lecture Notes in Mathematics},
 series = {Lect. Notes Math.},
 issn = {0075-8434},
 volume = {2244},
 isbn = {978-3-030-27122-0; 978-3-030-27124-4},
 year = {2019},
 publisher = {Cham: Springer},
 language = {English},
 doi = {10.1007/978-3-030-27124-4},
 zbMATH = {7103772},
 Zbl = {1459.18001}
}

@article{Sistko26,
 author = {Sistko, Alexander},
 title = {On semisimple proto-abelian categories associated to inverse monoids},
 fjournal = {Journal of Algebra},
 journal = {J. Algebra},
 issn = {0021-8693},
 volume = {686},
 pages = {354--397},
 year = {2026},
 language = {English},
 doi = {10.1016/j.jalgebra.2025.07.052},
 zbMATH = {8117663}
}

@misc{Mozgovoy,
 author = {Mozgovoy, Sergey},
 title = {Proto-exact and parabelian categories},
 year = {2025},
 howpublished = {Preprint, {arXiv}:2503.05940 [math.{RT}] (2025)},
 url = {https://arxiv.org/abs/2503.05940},
 arXiv = {arXiv:2503.05940}
}

@article {Deodhar,
    AUTHOR = {Deodhar, Vinay V.},
     TITLE = {A note on subgroups generated by reflections in {C}oxeter
              groups},
   JOURNAL = {Arch. Math. (Basel)},
  FJOURNAL = {Archiv der Mathematik},
    VOLUME = {53},
      YEAR = {1989},
    NUMBER = {6},
     PAGES = {543--546},
      ISSN = {0003-889X,1420-8938},
   MRCLASS = {20H15 (20F55)},
  MRNUMBER = {1023969},
MRREVIEWER = {Fran\c{c}ois\ Digne},
       DOI = {10.1007/BF01199813},
       URL = {https://doi.org/10.1007/BF01199813},
}

@article {quiverf1,
    AUTHOR = {Szczesny, Matt},
     TITLE = {Representations of quivers over {$\mathbb{F}_1$} and {H}all
              algebras},
   JOURNAL = {Int. Math. Res. Not. IMRN},
  FJOURNAL = {International Mathematics Research Notices. IMRN},
      YEAR = {2012},
     PAGES = {2377--2404},
      ISSN = {1073-7928,1687-0247},
   MRCLASS = {16G20 (17B35)},
  MRNUMBER = {2923170},
MRREVIEWER = {Kostiantyn\ Iusenko},
       DOI = {10.1093/imrn/rnr113},
       URL = {https://doi.org/10.1093/imrn/rnr113},
}

@incollection{PRO,
  author    = {Dyckerhoff, Tobias},
  title     = {Higher Categorical Aspects of {H}all Algebras},
  booktitle = {Building Bridges Between Algebra and Topology},
  publisher = {Springer International Publishing},
  year      = {2018},
  pages     = {1--61},
  doi       = {10.1007/978-3-319-70157-8_1}
}

@article{BC,
  author    = {Bergeron, Nantel and Ceballos, Cesar},
  title     = {A {H}opf Algebra of Subword Complexes},
  journal   = {Adv. Math.},
  volume    = {305},
  year      = {2017},
  pages     = {1163--1201},
  doi       = {10.1016/j.aim.2016.10.007},
  mrnumber  = {3570156}
}

@article{Dyer,
  author    = {Dyer, M. J.},
  title     = {$n$-Low Elements and Maximal Rank $k$ Reflection Subgroups of {C}oxeter Groups},
  journal   = {J. Algebra},
  volume    = {607},
  year      = {2022},
  number    = {B},
  pages     = {139--180},
  doi       = {10.1016/j.jalgebra.2021.02.015},
  mrnumber  = {4441316}
}

@article{BDKT,
  author    = {Baumann, Pierre and Dunlap, Thomas and Kamnitzer, Joel and Tingley, Peter},
  title     = {Rank 2 Affine {MV} Polytopes},
  journal   = {Represent. Theory},
  volume    = {17},
  year      = {2013},
  pages     = {442--468},
  mrnumber  = {3072875}
}

@article{MVHD,
  author    = {Baumann, Pierre and Kamnitzer, Joel and Knutson, Allen},
  title     = {The {Mirkovi{\'c}--Vilonen} Basis and {Duistermaat--Heckman} Measures},
  note      = {With an appendix by Anne Dranowski, Kamnitzer and Calder Morton-Ferguson},
  journal   = {Acta Math.},
  volume    = {227},
  year      = {2021},
  number    = {1},
  pages     = {1--101},
  doi       = {10.4310/ACTA.2021.v227.n1.a1},
  mrnumber  = {4346265}
}

@article{GL,
  author    = {Gaussent, St{\'e}phane and Littelmann, Peter},
  title     = {LS Galleries, the Path Model, and {MV} Cycles},
  journal   = {Duke Math. J.},
  volume    = {127},
  year      = {2005},
  number    = {1},
  pages     = {35--88},
  doi       = {10.1215/S0012-7094-04-12712-5},
  mrnumber  = {2126496}
}

@article{JOEL,
  author    = {Kamnitzer, Joel},
  title     = {Mirkovi{\'c}--{V}ilonen Cycles and Polytopes},
  journal   = {Ann. of Math. (2)},
  volume    = {171},
  year      = {2010},
  number    = {1},
  pages     = {245--294},
  doi       = {10.4007/annals.2010.171.245},
  mrnumber  = {2630039}
}

@book{BAI,
  author    = {Bai, Yuguang},
  title     = {Cluster Structure for {M}irkovic--{V}ilonen Cycles and Polytopes},
  note      = {Thesis (Ph.D.)--University of Toronto (Canada)},
  publisher = {ProQuest LLC, Ann Arbor, MI},
  year      = {2022},
  pages     = {94},
  isbn      = {979-8209-90374-1},
  mrnumber  = {4419705}
}

@article{subword,
  author    = {Knutson, Allen and Miller, Ezra},
  title     = {Subword Complexes in {C}oxeter Groups},
  journal   = {Adv. Math.},
  volume    = {184},
  year      = {2004},
  number    = {1},
  pages     = {161--176},
  doi       = {10.1016/S0001-8708(03)00142-7},
  mrnumber  = {2047852}
}

\end{document}